\documentclass[10pt,noinfoline]{imsart}

\usepackage[colorlinks,citecolor=blue,urlcolor=blue]{hyperref}
\usepackage{palatino}
\usepackage{bm}
\usepackage{geometry}
\usepackage{graphics,epsfig,rotate,lscape,graphicx,amsmath,amsthm,amssymb,float,amsfonts,amsbsy,delarray,sectsty,amsfonts,amscd,pifont}
\usepackage[mathscr]{euscript}  
\usepackage{multirow}
\usepackage{soul,float}
\usepackage{mathrsfs}
\usepackage{epsf}
\usepackage{psfrag}
\usepackage{fancyvrb}
\usepackage{enumerate}
\usepackage{subfigure}
\usepackage{sidecap}
\usepackage{framed}
\usepackage[usenames,dvipsnames]{color}
\usepackage[numbers]{natbib}
\bibliographystyle{plain}

\geometry{letterpaper,left=1.2in,right=1.2in,top=1.2in,bottom=1.2in}

\definecolor{shadecolor}{gray}{0.90}
\newtheorem{claim}{Claim}

\newtheorem{theorem}{Theorem}

\newtheorem{corollary}[theorem]{Corollary}
\newtheorem{definition}{Definition}
\newtheorem{assumption}{Assumption}

\newcommand{\bs}{\boldsymbol}

\begin{document}

\begin{frontmatter}

\title{A generalized quadratic estimate for random field nonstationarity
\thanksref{code}}
\runtitle{Generalized quadratic estimate}

\begin{aug}
  \author{\fnms{Ethan}  \snm{Anderes}\thanksref{a, t1}\ead[label=e1]{anderes@ucdavis.edu}}
    \and
  \author{\fnms{Joe} \snm{Guinness}\thanksref{b}\ead[label=e2]{jsguinne@ncsu.edu}}

  \runauthor{Anderes and Guinness}

  \address[a]{Department of Statistics, University of California, Davis CA 95616, USA. \printead{e1}}

  \address[b]{Department of Statistics, North Carolina State University. \printead{e2}}

  \thankstext{t1}{EA was partially supported by NSF CAREER grant DMS-1252795}
  \thankstext{code}{All the source code used for the simulations and graphics in this paper is publicly available through the on-line repository \url{https://github.com/EthanAnderes/NonstationaryPhase.jl}}

\end{aug}

\begin{abstract}
In this paper, we attempt to shed light on a new class of nonstationary random fields which exhibit, what we call, {\em local invariant nonstationarity}. We argue that the local invariant property has a special interaction with a new  generalized quadratic estimate---also derived here---which extends an estimate originally developed for gravitational lensing of the Cosmic Microwave Background in Cosmology \cite{hu2001mapping, hu2002mass}. The nature of this interaction not only encourages low estimation bias but also enables accurate (and fast) quantification of Frequentist mean square error quantification of the estimated nonstationarity. These quadratic estimates are interesting, in their own right, as they detect and estimate nonstationarity by probing correlation among Fourier frequencies, the absence of which is the characterizing feature of weak stationarity (by Bochner's Theorem). Moreover, this generalized quadratic estimate can be computed with a Fourier characterization that runs in $\mathcal O(n\log n)$ time when observing the field on a uniform grid of size $n$ in $\Bbb R^d $. Finally, the work presented here partially addresses two other problems associated with the statistical theory of nonstationarity: 1) estimating the phase of a spatially varying modulated stationary random field and 2) identifying a larger class of nonstationary random fields which admit an extension of the quadratic estimator of gravitational lensing that extends the same attractive statistical properties.
\end{abstract}
\end{frontmatter}

\tableofcontents

\section{Introduction}

Many data sets in time series and spatial statistics show clear signs of nonstationarity \cite{sampson2010constructions,fuglstad2015does}.
Despite the prevalence of nonstationary data, theory for understanding and estimating nonstationary random field models is still underdeveloped compared to what is known for stationary random fields. For example, there seems to be no consensus among statisticians as to the relative merits of various nonstationary models and their corresponding estimates found in the current literature (examples of such models can be found in \cite{hsing2016local,sampson1992nonparametric,paciorek2006spatial,fuglstad2015exploring}). While powerful spectral techniques have been developed for deriving absolute continuity or orthogonality of stationary random fields \cite{ibragimov2012gaussian} and for understanding the implications for spatial interpolation \cite{stein2012interpolation}, we know quite little about such topics for nonstationary random fields.  A further complicating matter is that even if the parametric form for the nonstationary data generating mechanism is known, difficulties associated with the inevitable increase in the number of unknown parameters and computational complexity can outweigh the benefits of fitting the true nonstationary model. The situation is far worse for spatial statistics---as compared to time series---where irregularity of spatial observation locations and large boundary effects can make estimation and modeling of nonstationarity more difficult.

In this paper, we attempt to shed light on a new class of nonstationary random fields which exhibit, what we call, {\em local invariant nonstationarity}. Formally defined in Section \ref{section: local invariant}, a locally invariant nonstationary random field $\{Z(\bs x)\colon \bs x\in \Bbb R^d\}$ has the feature that the covariance function can be written in the following form,
\begin{align*}
\text{cov}(Z(\bs x), Z(\bs y))=K(\bs x-\bs y, \bs \theta(\bs x)-\bs \theta(\bs y)),
\end{align*}
where $K(\cdot, \cdot)\colon \Bbb R^d\times \Bbb R^d \rightarrow \Bbb R$ and $\bs\theta(\cdot) \colon \Bbb R^d\rightarrow \Bbb R^d$ is a vector field which characterizes the nonstationarity in $Z(\bs x)$. We argue that the locally invariant property has a special interaction with a new  generalized quadratic estimate (derived in Section \ref{Section: the quad est}) which extends an estimate originally developed for gravitational lensing of the Cosmic Microwave Background in Cosmology \cite{hu2001mapping, hu2002mass}. The nature of this interaction not only encourages low estimation bias but also enables accurate (and fast) Frequentist mean square error quantification of the nonstationarity characterized by $\bs \theta(\bs x)$. These quadratic estimates are interesting in their own right, as they detect and estimate nonstationarity by probing correlation among Fourier frequencies, the absence of which is the characterizing feature of weak stationarity (by Bochner's Theorem). Moreover, this generalized quadratic estimate can be computed with a Fourier characterization that runs in $\mathcal O(n\log n)$ time and $\mathcal O(n)$ storage when observing the field on a uniform grid of size $n$ in $\Bbb R^d $. Finally, the work presented here partially addresses two unanswered questions that arise in two important bodies of work on nonstationarity: 1) estimating the spatially varying phase in the models analyzed by Dahlhaus \cite{dahlhaus1997fitting,dahlhaus2000likelihood} and 2) how to extend the quadratic estimate of dark matter from the Cosmic Microwave Background to more general nonstationary random fields.

The seminal work of Dahlhaus in the time series literature \cite{dahlhaus1997fitting,dahlhaus2000likelihood}  is  an example of a collection of results that hint at a more general statistical theory of nonstationarity.  Dahlhaus develops asymptotic theory for a particular class of nonstationary random fields modeled by a spatially (or temporally) varying spectral density. These random fields were originally developed for time series \cite{priestley1965evolutionary,priestley1981spectral} and have the form
\begin{align}
\label{eq: intro nonstat phase model}
Z(\bs x)= \int_{\Bbb R^d} e^{i\bs x\cdot \bs k} A(\bs k, \bs x)\sqrt{C_{\bs k}}\frac{dW_{\bs k}}{(2\pi)^{d/2}}
\end{align}
where  $C_{\bs k}$ is a spectral density, $dW_{\bs k}$ is an orthogonal increment random complex measure that satisfies $E|dW_{\bs k}|^2 =  d{\bs k}$ (see \cite{Gikhman_Skorokhod_book_v1} for details on random measures) and $A({\bs k},{\bs x})$ represents a spatial (or temporal) modulation of $\sqrt{C_{\bs k}}$.
Dahlhaus proved consistency results for estimating the squared modulus $|A({\bs k},{\bs x})|^2$ when estimation is done  by maximizing a weighted sum of local likelihood functions. Left unanswered, however, is the question of estimating the phase of $A({\bs k},{\bs x})$. In Section \ref{section: NPhase}, we partly resolve this question by showing that a generalized quadratic estimate can be used to estimate a pure phase modulation $A({\bs k},{\bs x})=\exp({iB(\bs k,\bs x)})$ where $B(\bs k,\bs x)$ is a function taking values in $\Bbb R$ and is a separable function of $\bs x$ and $\bs k$. Indeed, this nonstationary phase model has the local invariance property and, as such, can be accurately estimated (under certain conditions on $B(\bs k,\bs x)$) with the generalized quadratic estimate developed here.

Another important development in the statistical theory of nonstationarity comes from recent gravitational lensing studies of the Cosmic Microwave Background (CMB) \citep{das2011detection,van2012measurement, planck2013lensing, Polarbear2014, planck2015lensing}. In Cosmology, gravitational lensing describes the distortion of photon trajectories due to density fluctuations of intervening dark matter. These density fluctuations affect the CMB observations by introducing small nonstationarities in the original isotropic random field model of the CMB. The state-of-the-art estimator of lensing, the quadratic estimator developed by Hu and Okamoto \cite{hu2001mapping, hu2002mass}, has become an incredibly successful tool for probing the nature of dark matter, understanding cosmic structure and constraining cosmological parameters. What is so surprising about this estimate is that is has small bias. This is due to a delicate cancellation of terms in an infinite Taylor expansion of the lensing effect. Unfortunately there has been no clear explanation as to why this cancellation occurs and whether or not it exists in other models of nonstationarity. In this paper, we argue that this cancellation is due to the fact that the lensing-induced nonstationarity is locally invariant. Moreover, the generalized quadratic estimator developed here extends the lensing estimator to more general settings. Indeed, many of techniques we use to derive the generalized quadratic estimator are inspired by---and closely follow---those used by Hu and Okamoto \cite{hu2001mapping, hu2002mass}.  The point of this paper, in contrast to the work of Hu and Okamoto, is to identify the cause of the delicate Taylor series cancellation and extend the benefits of the lensing estimator to a larger class of nonstationary random fields available to general practitioners of spatial statistics.

The first part of this paper, given in Section~\ref{section: local invariant}, defines the {\it locally invariant} property and uses it to derive a corresponding generalized quadratic estimate, called the quadratic estimate hereafter,  which has particularly low bias. These new estimates are global rather than local in nature and thus avoid complicating theoretical and practical issues related to bandwidth selection. Moreover, they are unique in that they work in the spectral domain by estimating cross correlation of the Fourier coefficients.  In Section~\ref{section: local invariant} we also derive analytic approximations for estimation variance and second order bias of the quadratic estimate. These approximations, and indeed the estimate itself, are often very accurate and have Fourier representations that yield fast computation.

The second part of this paper is given in Section~\ref{section: NPhase}. Here we analyze random fields which are characterized by a spatially varying spectral phase modulation of a stationary field, called nonstationary spectral phase random fields for the remainder of this paper. These models effectively generalize warping models, are locally invariant and, as such, are amenable to quadratic estimates. In Subsection \ref{section: Locally attainable spectral densities} we characterize how the local spectral density of the nonstationary spectral phase model varies as a function of spatial location. The theory of optimal transport and the $L_2$-Wasserstein metric play an important role in this characterization and leads to a natural heuristic for quantifying estimation bias in terms of the Wasserstein geodesic cut locus (see Claim \ref{claim: cut locus}, Section \ref{section: modeling xi, C and eta} and Figure \ref{Figure 2 zx and var, extra bias}). We also present a set of simulations, based on nonstationary spectral phase random fields, which demonstrate the accuracy of both the quadratic estimate and our approximation to the mean squared sampling properties of the quadratic estimate.

\subsection{Notation}

For readability it will be advantageous to briefly summarize the notational conventions used throughout this paper, some of which are borrowed from Cosmology and are somewhat nonstandard in the statistics literature.
Variables taking values in $\Bbb R^d$ or $\Bbb R^{d\times d}$ will generally be written with bold font such as $\bs x, \bs y, \bs k,\bs \ell\in \Bbb R^d$ or $\bs A,\bs M \in \Bbb R^{d\times d}$. Indexing into vector or matrix coordinates are written with subscripts so that $\bs x_i\in\Bbb R$ denotes the $i^{\text{th}}$ coordinate of $\bs x\in\Bbb R^d$, for example. When $\bs z, \bs w\in\Bbb C^d$ we let the (non-Hermitian) dot product  be denoted by $\bs z \cdot \bs w = \bs z_1 \bs w_1 + \ldots + \bs z_d \bs w_d$.

Vector fields  $\bs \theta(\bs x):\Bbb R^d\rightarrow \Bbb R^d$ are also written bold so that \mbox{$\bs \theta(\bs x)=(\bs\theta_{1}(\bs x),\ldots, \bs\theta_{d}(\bs x))^T$} where $\bs \theta_i(\bs x):\Bbb R^d\rightarrow \Bbb R$. The Fourier transform of $\bs \theta(\bs x)$, for example, is applied coordinate-wise and written $\bs \theta_{\bs k}=(\bs \theta_{1,\bs k},\ldots, \bs \theta_{d,\bs k})^T$ where $\bs \theta_{j,\bs k}$ denotes the Fourier transform of $\bs \theta_j(\bs x)$ and is defined as
\[
 \bs\theta_{j,\bs k}= \int e^{-i\bs x\cdot \bs k} \bs \theta_{j}(\bs x) \frac{d\bs x}{(2\pi)^{d/2}}.
 \]
For a mean-zero stationary random field $\{Z(\bs x): \bs x\in\Bbb R^d\}$ the autocorrelation function is denoted  $C^Z(\bs x-\bs y) := E(Z(\bs x)Z(\bs y)^*)$ and the spectral density is denoted $C_{\bs\ell}^{ZZ} = {(2\pi)}^{d/2} C_{\bs\ell}^Z$ where, via our convention, $C_{\bs\ell}^Z$ denotes the Fourier transform of $C^Z(\bs x)$.
Parenthetical superscripts are reserved for enumerating functions (and \textit{not} higher order derivatives). For example $f^{(1)}_{\bs k}, f^{(2)}_{\bs k}, f^{(3)}_{\bs k}, \ldots$ denotes a sequence of functions taking arguments $\bs k\in \Bbb R^d$ in the Fourier domain. This convention avoids indexing ambiguities and the subscript convention of the Fourier transform.

In the derivations that follow, one may assume all random fields have periodic boundary conditions on $(-L/2, L/2]{}^d$, for some large $L>0$. This alleviates subtleties associated with the Fourier transform of non-periodic random fields defined on $\Bbb R^d$. However, extensions to non-periodic random fields can be made with an appropriate use of generalized random fields and generalized Fourier transforms. To incorporate the periodic case and the---possibly generalized---general case we use a single notation for the Fourier transform in both situations. For example when  $f(\bs x)$ is periodic on $(-L/2, L/2]{}^d$ the notation $\int e^{-i\bs x\cdot \bs k} f(\bs x)\frac{d\bs x}{(2\pi)^{d/2}}$ and $\int e^{i\bs x\cdot \bs k} f_{\bs k}\frac{d\bs k}{(2\pi)^{d/2}}$ should be interpreted as notationally equivalent to $\int_{-L/2}^{L/2}\cdots\int_{-L/2}^{L/2} e^{-i\bs x\cdot \bs k} f(\bs x)\frac{d\bs x}{(2\pi)^{d/2}}$ and  $\sum_{\bs k \in \frac{2\pi}{L}\Bbb Z^d }   e^{i \bs x\cdot \bs k}  f_{\bs k} \frac{(2\pi/ L){}^{d}}{(2\pi){}^{d/2}} $, respectively. A consequence of this convention is that, in the periodic case, one equates $d\bs k$ with $(2\pi / L)^d$ and, therefore,  the Dirac delta function $\delta_{\bs k}$ becomes a regular function taking the value $1/d\bs k$ when $\bs k=\bs 0$ and zero otherwise. In particular, if $Z(\bs x)$ is a mean zero stationary random field with with periodic boundary conditions on $(-L/2, L/2]^d$ then $E(Z_{\bs k}^{\phantom{*}} Z^{*}_{\bs \omega}) = \delta_{\bs k-\bs \omega}C_{\bs k}^{ZZ}$
and $E(|Z_{\bs k}|^2) = \delta_{\bs 0}C_{\bs k}^{ZZ}$.

\section{Locally invariant nonstationary random fields}
\label{section: local invariant}

In this section we define a property of nonstationary random fields called \textit{local invariance}. This property,  along with a small set of generic model and observational assumptions, appears to be an important ingredient for quadratic estimates of nonstationarity to have low bias. Indeed, this is the main theme of the paper: {\it that the structure of local invariance encourages bias cancellation}. A secondary theme of this paper is that local invariance provides a flexible restriction to the class of all random field covariance functions whereby making generalized quadratic estimation available to a wide class of nonstationary random field applications.

\begin{definition}
\label{Definition 1}
Let $C^{\bs \theta}(\bs x, \bs y)$ be a positive definite covariance function defined on $\bs x, \bs y\in\Bbb R^d$ and parameterized by a vector field $\bs \theta(\bs x)\colon\Bbb R^d\rightarrow \Bbb R^d$.
Then $\bs \theta(\bs x)$ is said to be a \textbf{local invariant for $C^{\bs \theta}(\bs x,\bs y)$} if there exists a function \mbox{$K:\Bbb R^d\times \Bbb R^d \rightarrow \Bbb R$} such that
\begin{align}\label{def eq: local invariant covariance function}
C^{\bs \theta}(\bs x,\bs y) = K\big(\bs x-\bs y,\bs \theta(\bs x) -  \bs \theta(\bs y)\big).
\end{align}
Equivalently, \textbf{$Z(\bs x)$ is  locally invariant with respect to $\bs\theta(\bs x)$} if $Z(\bs x)$ is a random field  with covariance function $C^{\bs \theta}$ that satisfies (\ref{def eq: local invariant covariance function}).
\end{definition}

The name \textit{local invariance} is intended to express the following fact: any region where $\bs \theta(\bs x)$ is constant results in the same local stationary model. In particular, suppose $Z(\bs x)$ is a nonstationary random field with covariance function $C^{\bs \theta}(\bs x, \bs y)$ satisfying (\ref{def eq: local invariant covariance function}). If $\bs \theta(\bs x)$ has no variation (i.e. is constant) over neighborhoods $\Omega_1\subset \Bbb R^d$ and $\Omega_2\subset \Bbb R^d$ then $Z(\bs x)$ is locally stationary over $\Omega_1$ and $\Omega_2$ with the same local autocovariance function $K(\bs x-\bs y, \bs 0)$.

Examples of local invariant nonstationary random fields are easy to find. Indeed any warped random field of the form $Z(\bs x + \bs \theta(\bs x))$ is locally invariant with respect to $\bs\theta(\bs x)$ when $Z$ is a stationary random field. Another example of a locally invariant model, discussed later in Section \ref{section: NPhase}, is the spatially varying spectral phase model given in (\ref{eq: intro nonstat phase model}) where $A(\bs k,\bs x)=\exp(i\bs\theta(\bs x) \cdot \bs\eta_{\bs k})$ and $\bs \eta_{\bs k}$ is a known function mapping $\Bbb R^d$ into $\Bbb R^d$ that has odd symmetry about the origin. It is interesting to note that many of the asymptotic results for spatially (or temporally) varying spectral models study the estimation of $|A(\bs k, \bs x)|^2$ using a local periodogram \cite{dahlhaus1997fitting} or a version or the preperiodogram \cite{dahlhaus2000likelihood}. Since  $|A(\bs k, \bs x)|^2 = 1$ for the spectral phase model (\ref{eq: intro nonstat phase model}), these local periodograms to not immediately apply to the estimation of $A(\bs k, \bs x)$ defined in (\ref{eq: intro nonstat phase model}).

In what follows we derive a quadratic estimate of $\bs\theta(\bs x)$ for locally invariant nonstationarity of the form given in Definition \ref{Definition 1}. The estimates are derived under the following observational scenario: a single realization of the nonstationary random field observed on a dense grid with additive stationary noise. These assumptions hold throughout the paper. We list them here to be completely explicit.

\begin{assumption}\label{Assumption 4}
Let $Z(\bs x)$ be a mean zero nonstationary Gaussian random field with local invariant nonstationary covariance function $C^{\bs \theta}(\bs x, \bs y)$ satisfying Definition \ref{Definition 1}. The data field, denoted $Z^{obs}(\bs x)$, is observed on a dense regular grid and has the form
\[
Z^{obs}(\bs x) = Z(\bs x) + N(\bs x)
\]
where  $N(\bs x)$ is a mean zero stationary Gaussian generalized random noise with spectral density $C_{\bs k}^{N\!N}$.
\end{assumption}

Our second assumption is that the local invariant vector field $\bs \theta(\bs x)$ can be additionally characterized by an unknown scalar potential function $\phi(\bs x):\Bbb R^d \rightarrow \Bbb R$. This assumption simply  reduces the amount of complexity necessary for  developing a quadratic estimate of $\bs \theta(\bs x)$ while still retaining enough modeling flexibility.

\begin{assumption}\label{Assumption 2}
Suppose the vector field $\bs\theta(\bs x)\colon \Bbb R^d \rightarrow \Bbb R^d$ is characterized by an unknown scalar potential field $\phi(\bs x)\colon \Bbb R^d \rightarrow \Bbb R$ along with a known vector field  $\bs \xi_{\bs k}$ which satisfies
\[
\bs\theta_{\bs k} := \big(\bs\xi^{\phantom{*}}_{1,\bs k} \hspace{.05cm}\phi_{\bs k}^{\phantom{*}}\hspace{.05cm}, \,\ldots, \, \bs \xi_{d,\bs k}^{\phantom{*}}\hspace{.05cm}\phi_{\bs k}^{\phantom{*}}\big)^T\!\!.
\]
For each $j\in\{1,\ldots, d\}$ the coordinate spectral multiplier $\bs\xi_{j,\bs k}:\Bbb R^d\rightarrow \Bbb C$ is assumed to be a Hermitian function of $\bs k$ so that $\bs\xi_{j,-\bs k}^{\phantom{*}}= \bs\xi_{j,\bs k}^*$.
\end{assumption}

Notice that the above scalar potential model includes the case that $\bs \theta(\bs x)=\nabla \phi(\bs x)$,  by setting $\bs \xi_{\bs k}=i\bs k$, and the case that $\bs \theta(\bs x) = (\phi(\bs x),\ldots, \phi(\bs x))$, by setting $\bs \xi_{\bs k}=(1,\ldots, 1)$.

\begin{assumption}\label{Assumption 3}
There exists a mean zero stationary Gaussian random field prior for the unknown scalar potential $\phi(\bs x)\colon \Bbb R^d \rightarrow \Bbb R$. Let $C^{\phi}(\bs x - \bs y)= E(\phi(\bs x)\phi(\bs y))$ denote the autocovariance function for $\phi(\bs x)$ and $C^{\phi\phi}_{\bs k}$ denote the corresponding spectral density for this prior.
\end{assumption}

It is important to note that the prior in Assumption \ref{Assumption 3} is not used to generate a Bayesian posterior. A Bayesian posterior sampling methodology would be an exciting development but not the scope of the current paper. Instead, the prior is only used to approximate the marginal distribution of the data which, in turn, is used to optimize Fourier weights and to generate a variance approximation for the quadratic estimate of $\phi_{\bs k}$. Indeed the quadratic estimate, derived in Section \ref{Section: the quad est}, is defined to be an unbiased estimate of $\phi$ (up to first order) regardless of how $C_{\bs k}^{\phi\phi}$ is specified. The only effect of mis-specification (or non-existence) of $C_{\bs k}^{\phi\phi}$ will be to generate an estimate which gives too much weight to unruly frequencies and to report a less accurate mean squared error when using the approximations developed in Section \ref{Section: var and bias}.

Assumptions \ref{Assumption 4}, \ref{Assumption 2} and \ref{Assumption 3} are generic and intended to isolate a small set of  assumptions for deriving an estimate with small bias. Bias is not universally guaranteed to be small but the generalized quadratic estimates, derived in the next section for  local invariance models, often have surprisingly small estimation bias due to the local invariant property.

\subsection{
A generalized quadratic estimate of $\phi(\bs x)$
}\label{Section: the quad est}

Based on assumptions \ref{Assumption 4}, \ref{Assumption 2} and \ref{Assumption 3} given in the previous section, the first step for deriving a generalized quadratic estimate of $\phi(\bs x)$ is to linearly approximate  $C^{\bs \theta}(\bs x,\bs y)$, with a Taylor expansion in $\bs \theta(\bs x) - \bs \theta(\bs y)$ as follows
\begin{align}
\label{first order term, intro}
C^{\bs \theta}(\bs x,\bs y) = C^{(0)}(\bs x-\bs y)
        + \bs C^{(1)}(\bs x-\bs y)\cdot \left({\bs\theta(\bs x)-\bs\theta(\bs y)}\right)
        + \mathcal O({\phi}^2)+ \mathcal O(\phi^3)+\cdots.
\end{align}
where $C^{(0)}\colon \Bbb R^d \rightarrow \Bbb R$ and $\bs C^{(1)}\colon \Bbb R^d \rightarrow \Bbb R^d$ satisfies $\bs C^{(1)}(-\bs x) = -\bs C^{(1)}(\bs x)$.
Now, truncating (\ref{first order term, intro}) to first order and applying Claim \ref{claim: first order expansion} of Appendix \ref{section: Detailed Proofs} gives the following linear approximation of the cross frequency covariance in the Fourier transform of $Z(\bs x)$
\begin{align}
\label{eq: first order cov intro}
E\big(Z_{\bs k+\bs \ell}Z_{-\bs k}\big)  \approx  \phi_{\bs\ell}\left({\bs\xi_{\bs\ell}\!\cdot\! \bs C^{(1)}_{\bs k} - \bs\xi_{\bs\ell}\!\cdot\! \bs C_{\bs k+\bs \ell}^{(1)}}\right).
\end{align}
Recall Bochner's Theorem (details can be found in \cite{Gikhman_Skorokhod_book_v1}) which states that the random field $Z(\bs x)$ is stationary if and only if  \mbox{$E\big(Z_{\bs k+\bs \ell}Z_{-\bs k}\big) = 0$} for every $\bs\ell \neq\bs 0$. Therefore nontrivial covariance between $Z_{\bs k+\bs \ell}$ and $Z_{-\bs k}$ at some nonzero lag $\bs\ell$ provides a direct probe into the nonstationarity present in $Z(\bs x)$. Equation (\ref{eq: first order cov intro}) is, therefore, a translation of how local invariant nonstationarity relates to nonzero cross covariance in $Z_{\bs k}$.
Using this translation, Claim \ref{appendix, claim: quad est derivations} of Appendix \ref{section: Detailed Proofs} derives the quadratic estimate of $\phi_{\bs \ell}$. This estimate is effectively  an inverse-variance weighted autocovariance estimate in the Fourier domain and is given by
\begin{align}
    \label{explicit form of the qe in body of paper}
    \hat\phi_{\bs \ell}
    &=  A_{\bs \ell}
        \int
        {\Big(\bs\xi_{\bs \ell} \!\cdot\!\bs C^{{(1)}}_{\bs k} - \bs\xi_{\bs \ell} \!\cdot\!\bs C^{{(1)}}_{\bs k+\bs \ell}\Big)}^{\! *}
        \frac{Z^{obs}_{\bs k+\bs \ell}Z^{obs}_{-\bs k}}{C^{ZZobs}_{\bs k+\bs \ell}C^{ZZobs}_{\bs k}}\frac{d\bs k}{{(2\pi)}^{d/2}}.
\end{align}
In the above formula, $A_{\bs \ell}$ is a normalizing constant (see Claim \ref{appendix, claim: quad est derivations} for an exact expression),  $C^{ZZobs}_{\bs k}$ is defined to be $C^{ZZm}_{\bs k} + C^{N\!N}_{\bs k}$ where $C^{ZZm}_{\bs k}$ is the spectral density of $Z(\bs x)$ marginalized over the prior for $\phi(\bs x)$ given in Assumption \ref{Assumption 3}. Notice that the prior only serves to optimize the weights in $\hat\phi_{\bs \ell}$. Indeed, one can easily avoid specifying $C_{\bs k}^{\phi\phi}$ by instead defining $C^{ZZm}_{\bs k}$ to be ${(2\pi)}^{d/2}C^{(0)}_{\bs k}$ where $C^{(0)}_{\bs k}$ denotes the Fourier transform of $C^{(0)}(\bs x)$ from (\ref{first order term, intro}).

The normalizing constant $A_{\bs \ell}$ is completely determined by the weights used on the terms $Z^{obs}_{\bs k+\bs \ell}Z^{obs}_{-\bs k}$ through the requirement that $\hat\phi_{\bs \ell}$ be unbiased up to first order in $\phi$, in particular, requiring that $E(\hat\phi_{\bs \ell}) = \phi_{\bs \ell} + \mathcal O(\phi^2)+ \mathcal O(\phi^3)+\cdots$ or equivalently that $\frac{1}{\phi_{\bs \ell}}E(\hat\phi_{\bs \ell}) = 1 + \mathcal O(\phi)+ \mathcal O(\phi^2)+\cdots$.
This results in the following an analytic characterization for $A_{\bs \ell}$
\begin{equation}
\label{in text eq of the normalizing constant}
 A_{\bs \ell}
        \int
        \frac{\big|{\bs\xi_{\bs\ell}\!\cdot\! \bs C^{(1)}_{\bs k} - \bs\xi_{\bs\ell}\!\cdot\! \bs C_{\bs k+\bs \ell}^{(1)}}\big|^2}{C^{ZZobs}_{\bs k+\bs \ell}C^{ZZobs}_{\bs k}}\frac{d\bs k}{{(2\pi)}^{d/2}} = 1 + \mathcal O(\phi) + \mathcal O(\phi^2) + \cdots.
\end{equation}
Notice also that one is free to manually change weights used on each term $Z^{obs}_{\bs k+\bs \ell}Z^{obs}_{-\bs k}$ in  (\ref{explicit form of the qe in body of paper}). This may be advantageous for optimizing the sampling behavior of $\hat\phi_{\bs \ell}$ to specific applications. For example one may want to down-weight corrupted frequencies in a particular experimental setting. In this case, however, the form of the normalizing constant $A_{\bs \ell}$ given in (\ref{in text eq of the normalizing constant}) will need to be adjusted accordingly.

One of the advantages of the estimator $\hat\phi_{\bs \ell}$, defined in (\ref{explicit form of the qe in body of paper}), is that there exists a fast algorithm for computing $\hat\phi_{\bs \ell}$ at all frequencies $\bs \ell$ simultaneously by alternating pointwise operations in the Fourier domain and the pixel domain. Indeed by Claim \ref{appendix, claim: quad est derivations} of Appendix \ref{section: Detailed Proofs} the quadratic estimate given in (\ref{explicit form of the qe in body of paper}) is equivalent to
\begin{align}
    \label{fast form of the qe in body of paper}
    \hat\phi_{\bs \ell}
    &=   A_{\bs \ell}\sum_{p=1}^d \bs\xi^*_{p,\bs \ell} \int
        e^{-i\bs x\cdot\bs \ell} \mathscr A(\bs x)\mathscr B_{p}(\bs x)\frac{d\bs x}{{(2\pi)}^{d/2}}
\end{align}
where $\mathscr A_{\bs \ell}:= Z^{obs}_{\bs \ell}/ C^{ZZobs}_{\bs \ell}$ and $\mathscr B_{p,\bs \ell} := i 2  \, \textrm{imag}(\bs C^{(1)}_{p,\bs \ell}) Z^{obs}_{\bs\ell} / C^{ZZobs}_{\bs\ell}$ which  can be computed in $\mathcal O(n\log(n))$ time (when observing $Z^{obs}$ on a grid of size $n$) by a sequence of fast Fourier transforms, inverse fast Fourier transforms and pointwise operations.

The derivation above only depends on the local invariance property insofar as it is used to optimize the weights  in (\ref{explicit form of the qe in body of paper}) and the resulting normalizing constant given in (\ref{fast form of the qe in body of paper}). Indeed, exactly similar arguments can be made for deriving quadratic estimates of nonstationary models which are not locally invariant, such as covariance functions of the form $C^{\bs\theta}(\bs x,\bs y)=K(\bs x-\bs y,\bs\theta(\bs x)+\bs\theta(\bs y))$ for example. The key difference is that the quadratic estimate $\hat \phi_{\bs \ell}$ for models which are {\it not} locally invariant  tend to either have a large  $\mathcal O(\phi^2)+ \mathcal O(\phi^3)+\cdots$ bias,  small signal to noise ratio, or have significant non-Gaussian estimation variability. Local invariant models, in contrast, encourage a significant amount of cancellation occurring within $\mathcal O(\phi^2)+ \mathcal O(\phi^3)+\cdots$ so that bias is small even in the regime of moderately large signal to noise ratio. Moreover, small higher order terms provide a regime where mean square estimation variability is accurately approximated with easily computable formulas. This is explored in more detail in Section \ref{Section: var and bias} and in the simulation examples presented later.

\subsection{The Hu and Okamoto lensing estimate as a special case of $\hat\phi_{\bs \ell}$}
\label{section: Hu and Okamoto lensing special case}

In this section we show that $\hat \phi_{\bs \ell}$, derived in the previous section, is an extension of the original quadratic estimate developed in \cite{hu2001mapping, hu2002mass} for Cosmic Microwave Background gravitational lensing. Start by letting $Z(\bs x) = T(\bs x + \nabla \phi(\bs x))$  denote the lensed Cosmic Microwave Background  and $\phi(\bs x)$ denote the projected gravitational potential in the $\Bbb R^2$ flat sky approximation. In the original derivation \cite{hu2001mapping, hu2002mass} a Taylor approximation is first applied to $T(\bs x+ \nabla \phi(\bs x))$ as follows
\begin{equation}
\label{HuOk approx 1}
Z(\bs x) \approx T(\bs x) +  \nabla T(\bs x)\cdot\nabla \phi(\bs x)
\end{equation}
The above linear model is then used to linearly approximate $Z(\bs x)Z(\bs y)$ by additionally discarding the term $\big(\nabla T(\bs x)\cdot\nabla \phi(\bs x)\big) \big(\nabla T(\bs y)\cdot\nabla \phi(\bs y)\big)$ which is quadratic in $\phi$. Taking Fourier transforms, then an expected value, results in the following approximation
\begin{equation}
\label{HuOk approx 2}
E(Z_{\bs k+\bs \ell}Z_{-\bs k}) \approx \frac{\phi_{\bs \ell}}{2\pi} \Big(\bs \ell\cdot(\bs k+\bs \ell)C^{TT}_{\bs k+\bs \ell} - \bs \ell\cdot\bs k C^{TT}_{\bs k}\Big).
\end{equation}
Notice that (\ref{HuOk approx 2}) is a special case of (\ref{eq: first order cov intro}), and hence a special case of $\hat\phi_{\bs \ell}$ in (\ref{explicit form of the qe in body of paper}), when setting $\bs \xi_{\bs \ell} = i\bs \ell$ and $\bs C^{(1)}_{\bs k} = \frac{i\bs k}{2\pi}  C^{TT}_{\bs k}$.

It is important to notice a particular subtlety when analyzing the accuracy of  (\ref{HuOk approx 2}) in terms of the magnitude of the discarded terms in the Taylor approximation (\ref{HuOk approx 1}). This subtlety can be illustrated by assuming the displacement $\nabla \phi(\bs x)$ is extremely large and happens to be a \textit{constant function of $\bs x$}. In this case one clearly has $Z(\bs x) \not\approx T(\bs x)$, i.e. the zeroth order Taylor approximation completely breaks down. Yet, in a distributional sense, the zeroth order Taylor approximation is perfect since $Z(\bs x)$ and $T(\bs x)$ have the same finite dimensional distributions (by the fact that $T(\bs x)$ is isotropic and $\nabla \phi(\bs x)$ is assumed to be constant).
Therefore one can not quantify the accuracy of (\ref{HuOk approx 2}) by a map level analysis of the individual discarded terms in (\ref{HuOk approx 1}).
In fact we propose that (\ref{HuOk approx 2}) is accurate, not because the map level Taylor approximation (\ref{HuOk approx 1}) is good (for which it is not), but rather because $\nabla \phi(\bs x)$ is a locally invariant parameter and the corresponding nonstationary covariance
\[
C^{\bs \theta}(\bs x,\bs y)=E(Z(\bs x)Z(\bs y)) = C^T(\bs x - \bs y + \nabla\phi(\bs x) - \nabla\phi(\bs y))
\]
has an accurate Taylor approximation in $\nabla\phi(\bs x) - \nabla\phi(\bs y)$, vis-\`a-vis (\ref{first order term, intro}).

\subsection{Variance and bias analytic approximation}
\label{Section: var and bias}

In Section \ref{Section: the quad est} a fast formula was derived for computing the estimate $\hat \phi_{\bs \ell}$ when observing a single realization of $Z^{obs}(\bs x)$. The speed at which $\hat \phi_{\bs \ell}$ can be computed on a dense observation grid makes it possible to perform large scale Monte Carlo analysis on $\hat\phi_{\bs \ell}$ in any experimental setting for which $Z^{obs}(\bs x)$ can be easily simulated. In this section we complement a simulation-based method of uncertainty quantification by providing analytic approximations to variance and second order bias of $\hat\phi_{\bs \ell}$. These approximations are often very accurate and inherit a similar Fourier representation as (\ref{fast form of the qe in body of paper}) for fast computation.

By inspection of (\ref{explicit form of the qe in body of paper}) one can consider $\hat\phi_{\bs \ell}$ as a function of the quadratic form  $Z^{obs}_{\bs k+\bs \ell}Z^{obs}_{-\bs k}$, integrating over the variable $\bs k$. In what follows we will consider the sampling behavior of $\hat\phi_{\bs \ell}$ when replacing $Z^{obs}_{\bs k+\bs \ell}Z^{obs}_{-\bs k}$ by some other function $X_{\bs k, \bs \ell}$ of two variables $\bs k,\bs\ell\in \Bbb R^d$. The following definition sets notation for this operation which is useful for denoting terms which are related to variance and bias of the estimator $\hat\phi_{\bs \ell}$ derived in subsections \ref{SubSection: var} and \ref{SubSection: bias}.

\begin{definition}
\label{def: quad est applied to X}
For any function $X_{\bs k, \bs \ell}:\Bbb R^d\times \Bbb R^d \rightarrow \Bbb C$ let
$\hat\phi_{\bs \ell}\{X_{\bs k,\bs \ell}\}$ denote the quadratic estimate $\hat\phi_{\bs \ell}$ defined in (\ref{explicit form of the qe in body of paper}) but applied to the function $X_{\bs k, \bs \ell}$ rather than $Z^{obs}_{\bs k+\bs \ell}Z^{obs}_{-\bs k}$. In particular $\hat\phi_{\bs \ell}\{X_{\bs k,\bs \ell}\}$ is a function of $\bs \ell$  and satisfies
\[
\hat\phi_{\bs \ell}\{X_{\bs k,\bs \ell}\}
:=A_{\bs \ell}
        \int
        {\Big(\bs\xi_{\bs \ell} \!\cdot\!\bs C^{{(1)}}_{\bs k} - \bs\xi_{\bs \ell} \!\cdot\!\bs C^{{(1)}}_{\bs k+\bs \ell}\Big)}^{\! *}
        \frac{X_{\bs k,\bs \ell}}{C^{ZZobs}_{\bs k+\bs \ell}C^{ZZobs}_{\bs k}}\frac{d\bs k}{{(2\pi)}^{d/2}}.
\]
If, on the other hand, $X_{\bs k}$ and $Y_{\bs k}$ are both functions of a single frequency argument $\bs k\in \Bbb R^d$ then  we define
\[
\hat\phi_{\bs \ell}\{X,\!Y\}
:=A_{\bs \ell}
        \int
        {\Big(\bs\xi_{\bs \ell} \!\cdot\!\bs C^{{(1)}}_{\bs k} - \bs\xi_{\bs \ell} \!\cdot\!\bs C^{{(1)}}_{\bs k+\bs \ell}\Big)}^{\! *}
        \frac{X_{\bs k+\bs \ell}Y_{-\bs k}}{C^{ZZobs}_{\bs k+\bs \ell}C^{ZZobs}_{\bs k}}\frac{d\bs k}{{(2\pi)}^{d/2}}.
\]
\end{definition}

In the following two sections we derive approximations to the mean squared error and bias when using $\hat\phi_{\bs \ell}$ to estimate $\phi_{\bs \ell}$. This comes in the form of two functions $C_{\bs \ell}^{\text{var }\hat\phi}$ and $C_{\bs \ell}^{\text{bias }\hat\phi}$ which represent approximations to the spectral density of variance and bias after marginalizing over the unknown $\phi_{\bs \ell}$ using the Gaussian random field prior given in Assumption \ref{Assumption 3}.

\subsubsection{Variance spectral density $C_{\bs \ell}^{\text{var }\hat\phi}$}
\label{SubSection: var}

There are two main contributions to the variability in $\hat\phi_{\bs\ell}$. The first source of variability is due to the   additive observational noise $N(\bs x)$. The second source, sometimes called \textit{shape noise} in Cosmology, is due to the baseline stationary fluctuations in $Z(\bs x)$ characterized by the autocovariance function $K(\bs x - \bs y, \bs 0)$.  The spectral density of this shape noise can be approximated by ${(2\pi)}^{d/2}C^{(0)}_{\bs \ell}$, which corresponds to the zero${}^\text{th}$ order approximation in (\ref{first order term, intro}), or by $C^{ZZm}_{\bs \ell}$ which denotes the  spectral density of $Z(\bs x)$ marginalized over the prior from Assumption \ref{Assumption 3}. Both these approximations can be used, within the formulas derived below, to give accurate approximations to the mean squared variability in $\hat\phi_{\bs \ell}$. However, in the cosmology literature on gravitational lensing, the marginal stationary model for $Z(\bs x)$ is typically used, rather than ${(2\pi)}^{d/2}C^{(0)}_{\bs \ell}$, for shape noise quantification.

To derive $C_{\bs \ell}^{\text{var }\hat\phi}$ first let $X(\bs x)$ denote the mean zero stationary Gaussian random field which models the sum of the observational noise $N(\bs x)$ and the shape noise discussed in the previous paragraph. By propagating the random field $X$ through the quadratic estimate one obtains an estimate of variability of $\hat\phi_{\bs \ell}$ around its expected value. In particular
\begin{align}
E\big(\big[\hat\phi_{\bs \ell}-E(\hat\phi_{\bs \ell})\big]\big[\hat\phi_{\bs \ell^\prime}-E(\hat\phi_{\bs \ell^\prime})\big]^*\big) &\approx E\big(\hat\phi_{\bs \ell}\{X,\!X\} \, \hat\phi_{\bs \ell^\prime}\{X,\!X\}^*\big) =:  \delta^{\phantom{*}}_{\bs \ell-\bs \ell^\prime} C^{\text{var }\hat\phi}_{\bs \ell} \label{var aprox wick form}
\end{align}
where the existence of the spectral density $C^{\text{var }\hat\phi}_{\bs \ell}$ is guaranteed by the fact that $X(\bs x)$ is stationary so that $\hat\phi_{\bs \ell}\{X,X\}$ is stationary in the pixel domain (see Claim \ref{thm: quantify the var fluctuations of hat phi}).
Depending on which approximation one uses for the baseline stationary fluctuations in $Z(\bs x)$, the spectral density of $X(\bs x)$ can be defined in one of the following two ways
\begin{align}\label{Cxx def}
C^{X\!X}_{\bs \ell}=
\begin{cases}
C^{N\!N}_{\bs \ell} + {(2\pi)}^{d/2}C^{(0)}_{\bs \ell}, & \text{option 1;}\\
C^{N\!N}_{\bs \ell} + C^{ZZm}_{\bs \ell}, & \text{option 2.}
\end{cases}
\end{align}
Now given $C^{X\!X}_{\bs \ell}$, Claim \ref{thm: quantify the var fluctuations of hat phi} establishes that
\begin{align}
	\label{eq: spec den of hatphi on X}
	C^{\text{var }\hat\phi}_{\bs \ell} &=
        2 A_{\bs\ell}^2
        \int
        {\Big|\bs\xi_{\bs \ell} \!\cdot\!\bs C^{{(1)}}_{\bs k} - \bs\xi_{\bs \ell} \!\cdot\!\bs C^{{(1)}}_{\bs k+\bs \ell}\Big|}^{2}
        \frac{C^{X\!X}_{\bs k+\bs \ell}}{{(C^{ZZobs}_{\bs k+\bs \ell})}{}^2}
        \frac{C^{X\!X}_{\bs k}}{{(C^{ZZobs}_{\bs k})}{}^2}\frac{d\bs k}{{(2\pi)}^{d}}.
    \end{align}
 In certain situations the right hand side of (\ref{eq: spec den of hatphi on X}) can be simplified. Recall that in the definition of $\hat\phi_{\bs \ell}$, one has two options for defining $C^{ZZobs}_{\bs \ell}$, either $C^{N\!N}_{\bs \ell} + {(2\pi)}^{d/2}C^{(0)}_{\bs \ell}$ or $C^{N\!N}_{\bs \ell} + C^{ZZm}_{\bs \ell}$, depending on if one wants to use the prior spectral density $C^{\phi\phi}_{\bs \ell}$ for optimizing the quadratic estimate weights. If the choice of $C^{ZZobs}_{\bs \ell}$ matches the choice of  $C_{\bs \ell}^{X\!X}$ then one obtains the following simplification of (\ref{eq: spec den of hatphi on X})
\begin{align}
	\label{eq: spec den of hatphi on CZZmobs}
C^{\text{var }\hat\phi}_{\bs \ell} = 2{(2\pi)}^{-d/2} A_{\bs \ell}.
 \end{align}

It should be noted that when marginalizing  over the prior given in Assumption \ref{Assumption 3} the process $Z(\bs x)$ becomes stationary but non-Gaussian. On the other hand, when conditioning on $\phi$, the process $Z(\bs x)$ is Gaussian but nonstationary. Therefore, when using option 2 in equation (\ref{Cxx def}) to model $C^{\text{var }\hat\phi}_{\bs \ell}$, the approximation in (\ref{var aprox wick form}) includes a Gaussian approximation to $X(\bs x)$. Finally, we mention that Claim \ref{thm: quantify the var fluctuations of hat phi} also gives a Fourier based characterization  for fast computation of $C^{\text{var }\hat\phi}_{\bs \ell}$.

\subsubsection{Bias spectral density $C_{\bs \ell}^{\text{bias }\hat\phi}$}
\label{SubSection: bias}

The higher order terms $\mathcal O(\phi^n)$ in  (\ref{first order term, intro}) are the exclusive source of bias in the quadratic estimate. The relation between $\mathcal O(\phi^n)$ and estimation bias can be written as follows
\[
E(\hat\phi_{\bs \ell}|\phi) -\phi_{\bs \ell} =   \hat\phi_{\bs\ell}\bigl \{ {\mathcal O(\phi^2)}_{\bs k +\bs \ell, -\bs k} \bigr \} + \hat\phi_{\bs\ell}\bigl \{ {\mathcal O(\phi^3)}_{\bs k +\bs \ell, -\bs k} \bigr \} +\cdots
\]
where $\mathcal O(\phi^n)_{\bs k+\bs \ell, -\bs k}$ is defined to be Fourier transform of $\mathcal O(\phi^n)(\bs x,\bs y)$, defined in (\ref{first order term, intro}), and evaluated at frequencies $\bs k+\bs \ell$ and  $-\bs k$, respectively. For the local invariant models we consider here, the second order bias term has the following form
\begin{equation}\label{eq: 2nd order bias (x,y)}
\mathcal O(\phi^2)(\bs x,\bs y) = (\bs\theta(\bs x) - \bs\theta(\bs y))^T \bs C^{(2)}(\bs x-\bs y) (\bs\theta(\bs x) - \bs\theta(\bs y))
\end{equation}
where $\bs C^{(2)}\colon \Bbb R^d\rightarrow \Bbb R^{d\times d}$ is a symmetric function about the origin.
This expression makes it clear how local invariant nonstationarity encourages low quadratic estimation bias. When the observational noise level is small, the high frequency fluctuations in the observations $Z^{obs}(\bs x)$ are more influential to the quadratic estimate.
At these small scales the smoothness of $\bs \theta(\bs x)$ and the function $\bs C^{(2)}(\bs x-\bs y)$ will attenuate the influence of $\mathcal O(\phi^2)$  when propagated through $\hat\phi_{\bs \ell}$.

For remainder of this section we analyze how the second order term (\ref{eq: 2nd order bias (x,y)}) propagates to second order bias in the quadratic estimate, denoted $\hat\phi_{\bs \ell}^{\text{bias}}$.
Claim \ref{thm: bias in the appendix} in the Appendix gives the following expression for $\hat\phi_{\bs \ell}^{\text{bias}}$
\begin{align}
    \label{2nd order bias in local invariant section II}
    \hat\phi_{\bs \ell}^{\text{bias}}&=\hat\phi_{\bs \ell}\big\{\mathcal O(\phi^2)_{\bs k+\bs \ell, -\bs k}\big\} =  2\sum_{p,q=1}^d\int  \bs\theta_{p,\bs\omega}\bs\theta_{q,\bs \ell - \bs\omega}\, \hat\phi_{\bs \ell}\big\{{\bs C^{(2)}_{p,q,\bs k}}- {\bs C^{(2)}_{p,q,\bs k + \bs\omega}}\big\}\frac{d\bs\omega}{(2\pi)^{d/2}}.
\end{align}
Moreover the marginal expected value of this bias term satisfies $E\big( \hat\phi_{\bs \ell}^{\text{bias}} \big) = 0$
when $\bs \ell \neq 0$. Therefore, to quantify the marginal magnitude of the second order bias one must use the variance of (\ref{2nd order bias in local invariant section II}). This is done in  Claim \ref{thm: bias in the appendix} which establishes that when $\bs\theta(\bs x)$ is a mean zero Gaussian random field with spectral density matrix $C_{\bs \ell}^{\bs \theta\bs\theta}$ the corresponding spectral density for $\hat\phi_{\bs \ell}^{\text{bias}}$, denoted $C_{\bs \ell}^{\text{bias }\hat\phi}$, satisfies
\begin{align}
	C_{\bs \ell}^{\text{bias }\hat\phi}
	&= 4\!\! \sum_{p,q,p^\prime\!,q^\prime\!=1}^d \int
	\Big(
	C^{\bs \theta\bs \theta}_{p,p^\prime,\bs \omega}C^{\bs \theta\bs \theta}_{q,q^\prime,\bs \ell - \bs \omega}
	+ C^{\bs \theta\bs \theta}_{p,q^\prime,\bs \omega}C^{\bs \theta\bs \theta}_{q,p^\prime,\bs \ell - \bs \omega}
	\Big)\nonumber
	\\
	&\qquad\qquad\qquad\qquad\qquad\times
	\hat\phi_{\bs \ell}\big\{{\bs C^{(2)}_{p,q,\bs k}}- {\bs C^{(2)}_{p,q,\bs k + \bs\omega}}\big\}
	\hat\phi_{\bs \ell}\big\{{\bs C^{(2)}_{p^\prime,q^\prime,\bs k}}- {\bs C^{(2)}_{p^\prime,q^\prime,\bs k + \bs\omega}}\big\}^*\frac{d\bs\omega}{(2\pi)^{d}}
	\label{eq in intro on spec den of O(phi2)}
\end{align}
when $\bs \ell \neq \bs 0$.
Notice that $\hat\phi_{\bs \ell}^{\text{bias}}$ equals the exact, map level, bias contribution from the second order term $\mathcal O(\phi^2)$. Therefore the statement that $C_{\bs \ell}^{\text{bias }\hat\phi}$ is an approximation  to the second order bias only refers to the fact that it marginally quantifies the impact of the second order term $\mathcal O(\phi^2)$ rather than the all order bias $\mathcal O(\phi^2) + \mathcal O(\phi^3) + \cdots$.

In contrast to  $C^{\text{var\,} \hat\phi}_{\bs \ell}$, which can be computed quickly using forward and inverse Fourier transformations, the calculation of $C^{\text{bias\,} \hat\phi}_{\bs \ell}$ appears to require explicit looping over $\bs\omega$ for each $\bs \ell$. This is problematic when $Z(\bs x)$ is observed on a high dimensional dense grid. However, there is an approximation to $C^{\text{bias\,} \hat\phi}_{\bs \ell}$ which is both fast and yields excellent numerical accuracy for frequencies $\bs \ell$ with small to moderate magnitude. The approximation is derived with a second order Taylor approximation $\bs C^{(2)}_{p,q,\bs k + \bs\omega}\approx \bs C^{(2)}_{p,q,\bs k} +  \nabla\bs C^{(2)}_{p,q,\bs k}\bs \omega +  \frac{1}{2}\bs \omega^T \nabla^2\bs C^{(2)}_{p,q,\bs k} \bs \omega$ so that
\begin{align}
 \hat\phi_{\bs \ell}\big\{{\bs C^{(2)}_{p,q,\bs k}}- {\bs C^{(2)}_{p,q,\bs k + \bs\omega}}\big\}
 \approx
 -  \hat\phi_{\bs \ell}\big\{ \nabla\bs C^{(2)}_{p,q,\bs k} \big\} \,\bs \omega
 - \textstyle\frac{1}{2}\bs \omega^T\, \hat\phi_{\bs \ell}\big\{ \nabla^2\bs C^{(2)}_{p,q,\bs k} \big\}\,\bs \omega.
 \label{eq: fast approximation to Cpbias}
\end{align}
The advantage being that $ \hat\phi_{\bs \ell}\big\{ \nabla\bs C^{(2)}_{p,q,\bs k} \big\}$ and $\hat\phi_{\bs \ell}\big\{ \nabla^2\bs C^{(2)}_{p,q,\bs k} \big\}$ only need to be computed once and can therefore be factored out of the integral (\ref{eq in intro on spec den of O(phi2)}). The factored integral is then recognized as a convolution which can be quickly computed using forward and inverse Fourier transforms.
The quality of the approximation to  $C^{\text{bias\,} \hat\phi}_{\bs \ell}$ is illustrated in Section \ref{section: nonstat example d=1} where simulations are done on a sufficiently coarse grid to allow a comparison of both $C^{\text{bias\,} \hat\phi}_{\bs \ell}$ and the fast approximation. In Section \ref{section: nonstat example d=2}, however, simulations are done on a two dimensional grid which is dense enough to necessitate the fast approximation to $C^{\text{bias\,} \hat\phi}_{\bs \ell}$.

%
%
\subsection{An illustration of the bias reduction due to local invariance}
\label{subsubsection: illustration of the bias reduction}

In this section we give an example of two stochastic processes with nearly the same values of $\bs C_{\bs k}^{(0)}, \bs C_{\bs k}^{(1)}$ and $\bs C_{\bs k}^{(2)}$, discussed above, but where one process is not locally invariant.  A quadratic estimate of nonstationarity is derived for both models and the resulting bias of each is compared. The main conclusion is that a small deviation from the locally invariant structure results in a bias that is orders of magnitude larger than what is found in the local invariant model.

Consider the following two periodic nonstationary stochastic processes\footnote{We follow our notational convention and use non-bold symbols in this section to indicate scalar quantities for $d=1$. Moreover, due to the periodic nature of $Z(t)$ and $\tilde Z(t)$, our notation dictates
$\int \equiv \sum_{k\in \Bbb Z}$ in (\ref{eq: def Z(t) and tildeZ(t)}).}
 on $[-\pi,\pi)$
\begin{align}
\label{eq: def Z(t) and tildeZ(t)}
Z(t) &:= \int  e^{i tk} e^{i\phi(t)k }\sqrt{C_k}\frac{dB_{k}}{\sqrt{2\pi}},\qquad    \widetilde Z(t) := \int    e^{i tk} e^{\phi(t)|k| }\sqrt{C_k}\frac{dB_{k}}{\sqrt{2\pi}}
\end{align}
where  $dB_{k}$ is complex Gaussian white noise, $C_k$ is the Mat\'ern spectral density  with parameters $\nu = 2, \rho = 0.025, \sigma = 1$ (using parameterization given in equation (33) of \cite{stein2012interpolation}),  $C^{\phi\phi}_k$ is the Mat\'ern spectral density with parameters $\nu = 3 , \rho = 2\pi/10, \sigma = 0.03$. Notice that $Z(t)$ has a local invariant nonstationarity, whereas $\widetilde Z(t)$ does not. Indeed the analog to expansion (\ref{first order term, intro}) for the two covariance structures is given by
\begin{align}
\text{cov}(Z(t),Z(s))= C^{(0)}(t\!-\!s) &+ \big(\phi(t)-\phi(s)\big)^{\phantom{2}} C^{(1)}(t\!-\!s) \nonumber\\
									 &+ \big(\phi(t)-\phi(s)\big)^2 C^{(2)}(t\!-\!s) + \mathcal O(\phi^3)\label{eq: cov expansion cov(Z(t),Z(s))}\\
\text{cov}(\widetilde Z(t),\widetilde Z(s)) = C^{(0)}(t\!-\!s) &+ \big(\phi(t)+\phi(s)\big)^{\phantom{2}} \widetilde C^{(1)}(t\!-\!s) \nonumber\\
															&+ \big(\phi(t)+\phi(s)\big)^2 \widetilde C^{(2)}(t\!-\!s) + \mathcal O(\phi^3)\label{eq: cov expansion cov(widetilde Z(t),widetilde Z(s))}
\end{align}
where $\widetilde C^{(1)}_k$ and $\widetilde C^{(2)}_k$ are related to the corresponding local invariant terms as follows
\begin{align}
\widetilde C^{(1)}_k &:= |C^{(1)}_k|, \qquad
\widetilde C^{(2)}_k := -C^{(2)}_k.
\label{eq: comparison of invariant and non-invariant expansions}
\end{align}

The quadratic estimate $\hat\phi_{\ell}$ based on the observed local invariant process $Z^{obs}(t)= Z(t)$, without observational noise (so that $C^{N\!N}_{\ell}\equiv 0$), is defined by (\ref{explicit form of the qe in body of paper}). To construct a quadratic estimate of $\phi_\ell$ based on observations $\widetilde Z^{obs}(t) = \widetilde Z(t)$ first notice that one can use the expansion (\ref{eq: cov expansion cov(widetilde Z(t),widetilde Z(s))}) to derive the following approximation
\begin{align*}
E\big(\widetilde Z_{k+\ell}\widetilde Z_{-k}\big)  \approx  \phi_{\ell}\left( \widetilde C^{(1)}_{k} + \widetilde C_{k+\ell}^{(1)}\right).
\end{align*}
This is similar to
(\ref{eq: first order cov intro}) with the exception of one sign change necessary to accommodate the non local invariant structure in $\widetilde Z$.
The above approximation can now be used to define the following quadratic estimate of $\phi_\ell$, denoted $\widetilde \phi_{\bs \ell}$, from observations $\widetilde Z^{obs}(t)$
\begin{align*}
    \widetilde\phi_{\ell}
    &:=  \widetilde A_{\ell}
        \int
        {\Big(\widetilde C^{{(1)}}_{k} + \widetilde C^{{(1)}}_{k+\ell}\Big)}^{\! *}
        \frac{\widetilde Z^{obs}_{k+\ell}\widetilde Z^{obs}_{-k}}{\widetilde C^{ZZobs}_{k+\ell}\widetilde C^{ZZobs}_{k}}\frac{dk}{\sqrt{2\pi}}
\end{align*}
where $\widetilde A_{\ell}$ is defined just as in (\ref{in text eq of the normalizing constant}) with the exception that the minus sign is switched to a plus sign. Moreover, the approximations given in Sections \ref{SubSection: var} and \ref{SubSection: bias} can be similarly modified---just changing the negative sign in (\ref{2nd order bias in local invariant section II}) and in the definition of $C_{\ell}^{\text{var }\hat \phi}$---to produce analogous approximations for the variance and bias of $\widetilde \phi_{\ell}$, denoted  $C_{\ell}^{\text{var }\widetilde \phi}$ and  $C_{\ell}^{\text{bias }\widetilde \phi}$ respectively.

Figure \ref{Figure 1} shows the second order bias and variance approximation for the local invariant estimate $\hat\phi_{\ell}$ (\textbf{shown at left}) compared to the non local invariant estimate $\widetilde\phi_{\ell}$ (\textbf{shown at right}).
The left plot shows  $\ell^2 C_{\ell}^{\text{var }\hat \phi}$ and $\ell^2 C_{\ell}^{\text{bias }\hat \phi}$ (\textbf{solid-green} and \textbf{dashed-red}, respectively) whereas the right plot shows  $\ell^2 C_{\ell}^{\text{var }\widetilde \phi}$ and  $\ell^2 C_{\ell}^{\text{bias }\widetilde  \phi}$ (\textbf{solid-green} and \textbf{dashed-red}, respectively).
Both plots use the same axis range and additionally show the signal spectral density $\ell^2C^{\phi\phi}_\ell$ (\textbf{dotted line}) for comparison of the respective signal to noise ratios. Note that all spectral densities shown are multiplied by $\ell^2$ to improve the visualization of the high frequency power.

\begin{figure}[H]
\includegraphics[height=6.2cm]{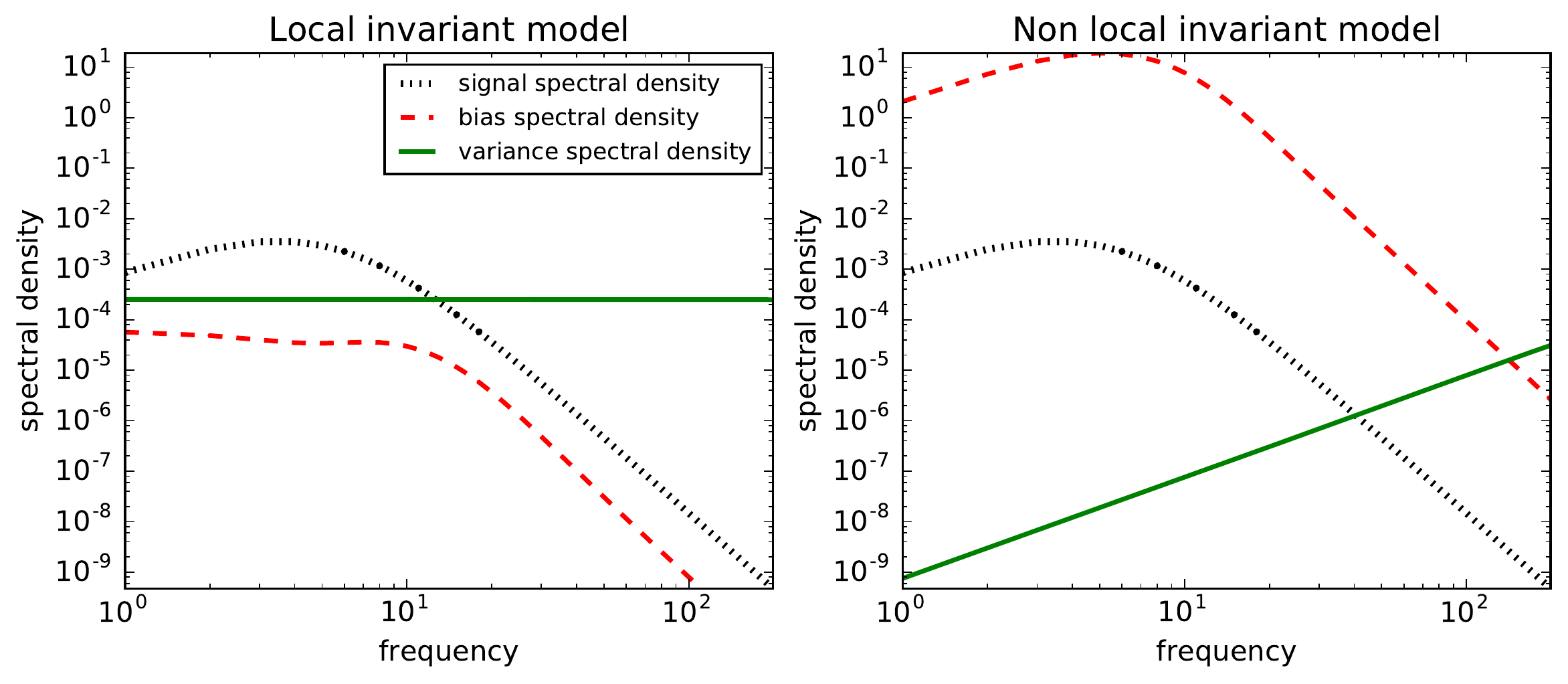}
\caption{
 These plots are intended to give an illustration  of the bias attenuation effect of the local invariant covariance structure (the specifics of the models are given in Section \ref{subsubsection: illustration of the bias reduction}).  The plot \textbf{at left} corresponds to the local invariant model and show estimation variance $\ell^2 C_{\ell}^{\text{var }\hat \phi}$ (\textbf{solid-green}), bias $\ell^2 C_{\ell}^{\text{bias }\hat \phi}$ (\textbf{dashed-red}) and the signal $\ell^2 C_{\ell}^{\phi\phi}$ (\textbf{dotted-black}). In contrast the plot \textbf{at right} shows their non local invariant counterparts.  Notice that the locally invariant second order bias is orders of magnitude smaller than the corresponding non local invariant bias, even though the functions $ C^{(1)}_k$ and $ C^{(2)}_k$ are similar to their non local invariant counterparts  $\widetilde C^{(1)}_k$ and $\widetilde C^{(2)}_k$. }
\label{Figure 1}
\end{figure}

The main conclusion from Figure \ref{Figure 1} is that, even though the functions $\widetilde C^{(1)}_k$ and $\widetilde C^{(2)}_k$ are very similar to their locally invariant counterparts, the local invariant model results in quadratic estimation bias that is orders of magnitude smaller than the corresponding non local invariant estimation bias. Indeed at most small frequencies $\ell$ one has
\begin{align*}
 C_{\ell}^{\text{bias }\hat \phi} &\ll \min(C_{\ell}^{\phi\phi}, C_{\ell}^{\text{var }\hat \phi}) \\
 C_{\ell}^{\text{bias }\widetilde \phi} &\gg \max( C_{\ell}^{\phi\phi}, C_{\ell}^{\text{var }\widetilde \phi}).
\end{align*}
Figure \ref{Figure 1} also shows that for small $\ell$, the signal to noise ratio $C_{\ell}^{\phi\phi}/C_{\ell}^{\text{var }\hat \phi}$ is large enough to suggest $\hat\phi(t)$ will be successful at tracking the large scale features of $\phi(t)$. Moreover, even at large $\ell$, where the signal to noise ratio for $\hat\phi_{\ell}$ is small, the fact that the bias is sub-dominant to the signal implies that detection of $C_{\ell}^{\phi\phi}$ is possible by averaging over a large number of frequencies to attenuate the impact of $C_{\ell}^{\text{var }\hat \phi}$.

%
%
\section{Nonstationary spectral phase model}
\label{section: NPhase}

In this section we specialize the results of the previous section to random fields which have a particular form: nonstationary spectral phase random fields. These models are locally invariant (c.f. Definition \ref{Definition 1}) and can be viewed as generalized warping models which are amenable to quadratic estimates. The general formulas for $\hat\phi_{\bs \ell}$, $C_{\bs \ell}^{\text{var }\hat\phi}$ and $C_{\bs \ell}^{\text{bias }\hat\phi}$, given in the previous section, are derived for the nonstationary spectral phase model to yield exact formulas. In subsection \ref{section: Locally attainable spectral densities} we present a characterization of the local spectral densities for nonstationary spectral phase models using the theory of optimal transport and the $L_2$-Wasserstein metric. In subsection \ref{section: modeling xi, C and eta} we present informal guidance for specifying some of the modeling parameters of the nonstationary models presented here.
Finally, in sections \ref{section: nonstat example d=1} and \ref{section: nonstat example d=2}, we
present a set of simulations which demonstrate the quadratic estimate and the accuracy of our approximation to the mean squared sampling properties.

\begin{definition}\label{def: nonstate phase}
A real random field $Z(\bs x)$ on $\Bbb R^d$ is said to be a \textbf{nonstationary spectral phase random field} if there exists  functions $C_{\bs k}:\Bbb R^d \rightarrow \Bbb R^+$, $\bs \theta(\bs x):\Bbb R^d \rightarrow \Bbb R^d$ and  $\bs\eta_{\bs k}:\Bbb R^d \rightarrow \Bbb R^d$ such that $C_{\bs k}$ has finite $L_1(\Bbb R^d)$ norm, $C_{-\bs k} = C_{\bs k}$,  $\bs \eta_{-\bs k} =-\bs\eta_{\bs k}$ and
\begin{align}
\label{eq: nonstate phase}
Z(\bs x)= \int \exp\big(i\bs x\cdot \bs k\big)\exp\big(i\bs\theta(\bs x) \cdot \bs\eta_{\bs k}\big) \,\sqrt{C_{\bs k}} \frac{dW_{\bs k}}{(2\pi)^{d/2}}
\end{align}
where $dW_{\bs k}$  denotes a complex Gaussian white noise random measure on $\Bbb R^d$ which satisfies $E|dW_{\bs k}|^2 = d\bs k$.
\end{definition}

The covariance function $C^{\bs \theta}(\bs x,\bs y):=\text{cov}(Z(\bs x), Z(\bs y))$ for the random field defined by (\ref{eq: nonstate phase}), conditioning on $\bs \eta_{\bs k}$ and $\bs \theta(\bs x)$, can be computed as follows
\begin{align}
\label{eq: NPhase cov model}
C^{\bs \theta}(\bs x,\bs y)=
\int  \exp\big(i(\bs x-\bs y)\cdot \bs k\big)\exp\big( i(\bs\theta(\bs x)-\bs\theta(\bs y)) \cdot \bs\eta_{\bs k}\big)\,C_{\bs k}\frac{d\bs k}{(2\pi)^d} .
\end{align}
The fact that $C^{\bs \theta}(\bs x,\bs y)$ can be written as a function of $\bs x-\bs y$ and $\bs\theta(\bs x)-\bs\theta(\bs y)$ implies that $Z(\bs x)$ has the local invariant property (see Definition \ref{Definition 1}) with respect to the nonstationarity characterized by $\bs\theta(\bs x)$. By assuming $\bs \eta_{\bs k}$ is known and $\bs \theta(\bs x)$ is characterized by a scalar potential $\phi(\bs x):\Bbb R^d \rightarrow \Bbb R$ (see Assumption \ref{Assumption 2}) the results of Section \ref{Section: the quad est} can be applied to generate a quadratic estimate $\hat\phi_{\bs \ell}$ based on a single realization of $Z(\bs x)$ with additive stationary noise.  Indeed, by expanding $\exp( i(\bs\theta(\bs x)-\bs\theta(\bs y)) \cdot \bs\eta_{\bs k})$ in (\ref{eq: NPhase cov model}), to second order, one obtains the following expression for the terms in (\ref{first order term, intro}) and (\ref{eq: 2nd order bias (x,y)})
\begin{align}
C_{\bs k}^{(0)} =  \frac{C_{\bs k}}{(2\pi)^{d/2}}, \qquad \bs C_{\bs k}^{(1)} = \frac{i  \bs \eta_{\bs k} C_{\bs k}}{(2\pi)^{d/2}},\qquad \bs C_{\bs k}^{(2)} = -\frac{\bs \eta_{\bs k}\bs \eta_{\bs k}^T C_{\bs k}}{2(2\pi)^{d/2}}.\label{eq: C(1), C(2) and C(3) for nonstationary phase}
\end{align}
The last ingredient needed for computing  $\hat\phi_{\bs \ell}$,\, $C_{\bs\ell}^{\text{var }\hat \phi}$ and $C_{\bs \ell}^{\text{bias }\hat \phi}$ is an expression for the marginal spectral density of the observed field $Z^{obs}(\bs x):=Z(\bs x) + N(\bs x)$, denoted $C^{ZZobs}_{\bs k}$ in Section \ref{Section: the quad est}. Notice that  Assumption \ref{Assumption 3} guarantees that $\bs \theta(\bs x)$ is a stationary mean zero Gaussian random field. Therefore
\begin{align*}
E\big(\exp(i (\bs \theta(\bs x) - \bs\theta(\bs y))\cdot \bs \eta_{\bs k})\big)
& = \exp\!\big(\!-\textstyle\frac{1}{2}\bs\eta^T_{\bs k} \bs\Sigma(\bs x-\bs y) \bs \eta_{\bs k}\big)
\end{align*}
where $\bs\Sigma(\bs x-\bs y)$ is the covariance matrix of $\bs\theta(\bs x) - \bs\theta(\bs y)$.
This implies that the marginal spectral density of the observations has the form
\begin{align}
C^{ZZobs}_{\bs k} = {(2\pi)}^{d/2}C^{Zm}_{\bs k} + C^{NN}_{\bs k} \label{eq: CZZobs for nonstationary phase}
\end{align}
where $C^{Zm}_{\bs k}$ is the Fourier transform of the marginal auto covariance of $Z(\bs x)$ and is given by
\begin{align*}
C^{Zm}(\bs x-\bs y)=
\int  \exp(i(\bs x-\bs y)\cdot \bs k)\exp\!\big(\!-{\textstyle\frac{1}{2}}\bs\eta^T_{\bs k} \bs\Sigma(\bs x-\bs y) \bs \eta_{\bs k}\big)\, C_{\bs k}\frac{d\bs k}{(2\pi)^d}.
\end{align*}
Now the expressions for $C^{ZZobs}_{\bs k}$, $\bs C_{\bs k}^{(1)}$ and $\bs C_{\bs k}^{(2)}$ given in (\ref{eq: C(1), C(2) and C(3) for nonstationary phase})  and (\ref{eq: CZZobs for nonstationary phase})
completely define the quadratic estimate $\hat\phi_{\bs \ell}$, the second order bias spectral density $C_{\bs\ell}^{\text{bias }\hat \phi}$ and the approximation to estimation variance characterized by $C_{\bs\ell}^{\text{var }\hat \phi}$ derived in Section \ref{Section: the quad est}.

\subsection{Locally attainable spectral densities}
\label{section: Locally attainable spectral densities}

In this section we investigate the set of possible local spectral densities, for different values of the nonstationary function $\bs \theta(\bs x)$, under the nonstationary spectral phase random field model of $Z(\bs x)$. Since the nonstationarity in $Z(\bs x)$ is exclusively due to local variation of a spectral phase, there is significant restriction on how local spectral densities can vary with $\bs x$. For example, one can easily see that all the local spectral densities of $Z(\bs x)$ must have the same integral (so that the pointwise variance of $Z(\bs x)$ is a constant function of $\bs x$). In what follows we characterize further restrictions and, in doing so, identify a second source of estimation bias due to the curved nature of the local spectral models. The theory of optimal transport and the $L_2$-Wasserstein metric play an important role in the characterization of local stationary models. We refer the reader to the excellent book \cite{villani2003topics} for an overview of the subject.

The local invariance property of nonstationary spectral phase models implies that the local distribution of $Z(\bs x)$ is invariant to changes in the magnitude of $\bs \theta(\bs x)$. However, the local behavior of $Z(\bs x)$ \textit{is} sensitive to the gradient of $\bs \theta(\bs x)$. In particular, suppose $\bs\theta(\bs x)$ has the form $\bs A \bs x + \bs b$  where $\bs A\in \Bbb R^{d\times d}$ and $\bs b \in \Bbb R^d$. In this case the covariance function $C^{\bs \theta}(\bs x,\bs y)$ is still invariant to changes in $\bs b \in \Bbb R^d$ but is sensitive to changes in $\bs A\in \Bbb R^{d\times d}$. Indeed assuming $\bs\theta(\bs x)=\bs A \bs x + \bs b$ one has
\begin{align}
C^{\bs \theta}(\bs x,\bs y)
&=\int  \exp\!\big(i(\bs x-\bs y)\!\cdot\! (\bs k + \bs A^T\bs\eta_{\bs k})\big) \, C_{\bs k}\frac{d\bs k}{(2\pi)^d}
=\int\exp\!\big(i(\bs x-\bs y)\!\cdot\! \bs\omega\big)  \frac{d\lambda(\bs \omega)}{(2\pi)^d} \label{eq: local spectral density for nonstationary phase}
\end{align}
where $\lambda$ is the spectral measure obtained by a change of variables $\bs\omega = \bs k + \bs A^T\bs\eta_{\bs k}$ (under appropriate measurability assumptions on $\bs \eta_{\bs k}$) Therefore when $\bs\theta(\bs x) = \bs A \bs x + \bs b$ in a local neighborhood about $\bs x$ the process $Z(\bs x)$ becomes locally stationary with local spectral measure given by $\lambda$.

The optimal transport literature uses the notation $\bs v \,\sharp\, \lambda(B) := \lambda(\bs v^{-1}(B))$ to denote the push forward of a measure $\lambda$ on $\Bbb R^d$ under a measurable transformation $\bs v_{\bs k}:\Bbb R^d\rightarrow \Bbb R^d$. For our needs it will be useful to extend this definition to spectral measures $\lambda$ which have a spectral density $C_{\bs k}$  with respect to Lebesgue measure on $\Bbb R^d$.  In particular we let $\bs v_{\bs k}\,\sharp\, C_{\bs k}$ denote the push forward of the measure  $C_{\bs k}d\bs k$ under the transformation $\bs v_{\bs k}$. This notation allows one to easily express the spectral measure $\lambda$  in (\ref{eq: local spectral density for nonstationary phase}) as
\[
\lambda= (\bs k+ \bs A^T \bs\eta_{\bs k})\,\sharp\, C_{\bs k}
\]
and, in doing so, creates a succinct notation for the collection of locally attainable spectral measures defined  as follows.

\begin{definition}
Suppose $Z(\bs x)$ is a nonstationary spectral phase random field on $\Bbb R^d$ satisfying Definition \ref{def: nonstate phase}. \textit{\textbf{The family of locally attainable spectral measures for $Z(\bs x)$}} is defined to be the collection of measures
\[
\mathscr C^{C,\bs\eta}:= \big\{ (\bs k+ \bs A^T \bs\eta_{\bs k})\,\sharp\, C_{\bs k}: \bs A \in \Bbb R^{d\times d}\big\}
\]
so that for each $\lambda \in \mathscr C^{C,\bs\eta}$ there exists a matrix $\bs A$ such that when $\bs \theta(\bs x) = \bs A\bs x$ the random field $Z(\bs x)$ becomes stationary with spectral measure $\lambda$.
\end{definition}

Notice that, depending on $\bs \xi$, there may a restriction on the possible matrices $\bs A$ which can satisfy $\bs \theta(\bs x) = \bs A\bs x$. This will further limit the set of attainable local spectral densities but is not included in the definition of $\mathscr C^{C,\bs \eta}$. The role of $\bs \xi$, in terms of modeling $Z(\bs x)$, is discussed in Section \ref{section: modeling xi, C and eta} below.

The following claim shows that given any two spectral densities $C_{\bs k}$ and $\tilde C_{\bs k}$ with finite second moments and the same $L_1(\Bbb R^d)$ integral, there exists a nonstationary spectral phase random field $
Z(\bs x)$ which has both $C_{\bs k}$ and $\tilde C_{\bs k}$ as locally attainable spectral densities (i.e. $C_{\bs k},\tilde C_{\bs k}\in\mathscr C^{C,\bs \eta}$). Moreover, each measure contained in the $L_2$-Wasserstein geodesic connecting $C_{\bs k}d\bs k$ to $\tilde C_{\bs k}d\bs k$ is also locally attainable by $Z(\bs x)$.

\begin{claim}[\textbf{Any pair of spectral densities with the same integral are attainable}]
\label{claim any two spec are attainable}
Let $d\geq 1$ be an integer, $t_0 > 0$ be a real number and $C_{\bs k}$, $\tilde C_{\bs k}$ be two spectral densities on $\Bbb R^d$ with finite second moments such that $\sigma^2 = \int_{\Bbb R^d}  C_{\bs k}d\bs k = \int_{\Bbb R^d} \tilde C_{\bs k}d\bs k$. Then there exists a vector field $\bs \eta_{\bs k}\colon \Bbb R^d \rightarrow \Bbb R^d$ which is $L_2(\Bbb R^d)$ integrable with respect to $C_{\bs k}d\bs k$ and  generates a one dimensional curve of spectral measures  $\{\lambda^{(t)}: t\in [0,t_0]\}$, defined by
\begin{align}
\lambda^{(t)} := (\bs k + t\bs \eta_{\bs k})\,\sharp\, C_{\bs k}
\end{align}
with endpoints $\lambda^{(0)}=C_{\bs k}d\bs k$ and $\lambda^{(t_0)}=\tilde C_{\bs k}d\bs k$,
such that  $\{\lambda^{(t)}:t\in[0,t_0] \}\subset \mathscr C^{C,\bs\eta}$. In particular there exists a nonstationary spectral phase random field model for which  $C_{\bs k}$ and $\tilde C_{\bs k}$ are both locally attainable.
Moreover,  $\{\lambda^{(t)}: t\in [0,t_0]\}$ is a $L_2$-Wasserstein geodesic path within the class of absolutely continuous spectral measures (with total mass $\sigma^2$ and finite second moments) and each measure $\lambda^{(t)}$ has a density $C^{(t)}_{\bs k}$ with respect to Lebesgue measure on $\Bbb R^d$ which (weakly) satisfies
\begin{align}
\label{div flow for geo}
\partial_t C^{(t)}_{\bs k} +\text{\rm div}(\bs \eta^{(t)}_{\bs k}C^{(t)}_{\bs k}) = 0
\end{align}
for all $t\in [0,t_0]$ where $\bs \eta^{(t)}_{\bs k}:=\bs \eta_{T_t^{-1}(\bs k)}$ and $T_t(\bs k):= \bs k + t\bs \eta_{\bs k}$.
\end{claim}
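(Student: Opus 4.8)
The plan is to recognize the desired curve $\{\lambda^{(t)}\}$ as the McCann displacement interpolation between $C_{\bs k}\,d\bs k$ and $\tilde C_{\bs k}\,d\bs k$, and to read off $\bs\eta$ from the Brenier optimal transport map. After normalizing by $\sigma^2$ so that we deal with probability measures, the hypotheses (source absolutely continuous, both with finite second moments) put us squarely in the setting of Brenier's theorem (see \cite{villani2003topics}): there is a convex function $\psi\colon\Bbb R^d\to\Bbb R$, differentiable $C_{\bs k}\,d\bs k$-a.e., whose gradient is the $C_{\bs k}\,d\bs k$-a.e.\ unique optimal map for the quadratic cost, so that $\nabla\psi\,\sharp\,(C_{\bs k}\,d\bs k)=\tilde C_{\bs k}\,d\bs k$ and $\int_{\Bbb R^d}|\nabla\psi(\bs k)-\bs k|^2\,C_{\bs k}\,d\bs k = W_2^2(C_{\bs k}\,d\bs k,\tilde C_{\bs k}\,d\bs k)<\infty$. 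I would then set
\[
\bs\eta_{\bs k}:=\tfrac{1}{t_0}\bigl(\nabla\psi(\bs k)-\bs k\bigr),\qquad
T_t(\bs k)=\bs k+t\,\bs\eta_{\bs k}=\bigl(1-\tfrac{t}{t_0}\bigr)\bs k+\tfrac{t}{t_0}\nabla\psi(\bs k),
\]
extending $\bs\eta$ arbitrarily on the Lebesgue-null set where $\nabla\psi$ fails to exist (this affects neither push-forwards nor the white-noise integral defining the field).

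The structural conclusions are then almost immediate. Plainly $\lambda^{(0)}=C_{\bs k}\,d\bs k$ and $\lambda^{(t_0)}=\nabla\psi\,\sharp\,(C_{\bs k}\,d\bs k)=\tilde C_{\bs k}\,d\bs k$. The $L_2(C_{\bs k}\,d\bs k)$-integrability is just $\int|\bs\eta_{\bs k}|^2 C_{\bs k}\,d\bs k=t_0^{-2}W_2^2(C\,d\bs k,\tilde C\,d\bs k)<\infty$, finiteness of $W_2$ coming from the finite second moments of the two densities. For membership $\{\lambda^{(t)}\}\subset\mathscr C^{C,\bs\eta}$, take $\bs A=t\,\mathbf I$; then $\bs A^T\bs\eta_{\bs k}=t\,\bs\eta_{\bs k}$ and $\bs k+\bs A^T\bs\eta_{\bs k}=T_t(\bs k)$, so $\lambda^{(t)}=(\bs k+\bs A^T\bs\eta_{\bs k})\,\sharp\,C_{\bs k}\in\mathscr C^{C,\bs\eta}$; in particular $C_{\bs k}$ ($\bs A=\bs 0$) and $\tilde C_{\bs k}$ ($\bs A=t_0\,\mathbf I$) are both locally attainable for the nonstationary spectral phase field with base density $C_{\bs k}$ and this $\bs\eta$. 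To be compatible with Definition \ref{def: nonstate phase} one invokes $C_{-\bs k}=C_{\bs k}$ and $\tilde C_{-\bs k}=\tilde C_{\bs k}$: reflecting $\bs k\mapsto-\bs k$ shows $\bs k\mapsto-\nabla\psi(-\bs k)$ is again the gradient of a convex function pushing $C_{\bs k}\,d\bs k$ to $\tilde C_{\bs k}\,d\bs k$, so by uniqueness $\nabla\psi(-\bs k)=-\nabla\psi(\bs k)$ and hence $\bs\eta_{-\bs k}=-\bs\eta_{\bs k}$.

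Next I would treat the geodesic and regularity claims. For $s\in[0,1)$ the map $(1-s)\,\mathrm{id}+s\,\nabla\psi$ is injective on the set where $\nabla\psi$ is defined --- the standard fact that a strict convex combination of the identity with the gradient of a convex (hence cyclically monotone) map is injective --- so $T_t$ is essentially invertible for $t\in[0,t_0)$, and $\lambda^{(t)}=T_t\,\sharp\,(C_{\bs k}\,d\bs k)$ is absolutely continuous with density $C^{(t)}_{\bs k}$ given by the Monge--Amp\`ere change of variables; together with the hypothesis that $\lambda^{(t_0)}$ has density $\tilde C_{\bs k}$ this yields a density for every $t\in[0,t_0]$. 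These measures stay in the prescribed class: total mass is preserved by push-forward, and $\int|T_t(\bs k)|^2 C_{\bs k}\,d\bs k\le 2\int|\bs k|^2 C_{\bs k}\,d\bs k+2\int|\nabla\psi(\bs k)|^2 C_{\bs k}\,d\bs k<\infty$. That $\{\lambda^{(t)}\}$ is a constant-speed $L_2$-Wasserstein geodesic is McCann's displacement-interpolation theorem: $|T_t(\bs k)-T_s(\bs k)|=\tfrac{|t-s|}{t_0}|\nabla\psi(\bs k)-\bs k|$ gives $W_2(\lambda^{(s)},\lambda^{(t)})\le\tfrac{|t-s|}{t_0}W_2(\lambda^{(0)},\lambda^{(t_0)})$, and the reverse inequality is the triangle inequality.

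Finally, for the continuity equation, fix $t\in(0,t_0)$, put $\bs\eta^{(t)}_{\bs k}:=\bs\eta_{T_t^{-1}(\bs k)}$ (well defined $\lambda^{(t)}$-a.e.\ by the injectivity above), and test against $f\in C_c^\infty(\Bbb R^d)$:
\begin{align*}
\frac{d}{dt}\int_{\Bbb R^d} f\,d\lambda^{(t)}
&=\frac{d}{dt}\int_{\Bbb R^d} f\bigl(T_t(\bs k)\bigr)\,C_{\bs k}\,d\bs k
=\int_{\Bbb R^d}\nabla f\bigl(T_t(\bs k)\bigr)\cdot\bs\eta_{\bs k}\,C_{\bs k}\,d\bs k\\
&=\int_{\Bbb R^d}\nabla f(\bs\omega)\cdot\bs\eta^{(t)}_{\bs\omega}\,C^{(t)}_{\bs\omega}\,d\bs\omega,
\end{align*}
where differentiation under the integral is justified by $\|\nabla f\|_\infty<\infty$ and $\bs\eta\in L_2(C_{\bs k}\,d\bs k)$ (so the integrand is dominated by an $L_1(C_{\bs k}\,d\bs k)$ function), and the last step is the change of variables $\bs\omega=T_t(\bs k)$. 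This is exactly the weak form of $\partial_t C^{(t)}_{\bs k}+\mathrm{div}(\bs\eta^{(t)}_{\bs k}C^{(t)}_{\bs k})=0$. The main obstacle I anticipate is the low regularity of the objects: $\nabla\psi$, hence $\bs\eta$, is only defined a.e.\ and is merely $L^2$, so the absolute continuity of the intermediate $\lambda^{(t)}$ (which genuinely uses convexity of $\psi$, not just measurability), the a.e.\ invertibility of $T_t$, and the differentiation-under-the-integral step each need to be argued carefully rather than by formal manipulation; once Brenier's theorem and McCann's interpolation are in hand, the remainder is bookkeeping.
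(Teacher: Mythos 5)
Your proposal is correct and follows essentially the same route as the paper's proof: define $\bs\eta_{\bs k}=\tfrac{1}{t_0}(\nabla\psi_{\bs k}-\bs k)$ from the Brenier map, observe that $T_t$ is the McCann displacement interpolant $(1-\tfrac{t}{t_0})\bs k+\tfrac{t}{t_0}\nabla\psi_{\bs k}$, and take $\bs A=t\bs I_d$ for local attainability; where the paper simply cites Villani's Proposition 5.9 and Theorem 5.34 for the geodesic property, absolute continuity, and the continuity equation, you supply direct arguments for the same facts. Your additional verification that $\bs\eta_{-\bs k}=-\bs\eta_{\bs k}$ (needed for compatibility with Definition \ref{def: nonstate phase}) is a worthwhile detail the paper's proof omits.
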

\begin{proof}
By standard optimal transport theory (see \cite{villani2003topics} for example) the assumptions on $C_{\bs k}$ and $\tilde C_{\bs k}$ guarantee the existence of a convex function $\psi_{\bs k}:\Bbb R^d \rightarrow \Bbb R$ such that $\nabla \psi_{\bs k}$ is the optimal transport from $C_{\bs k}d\bs k$ to $\tilde C_{\bs k}d\bs k$.  Let $\bs \eta_{\bs k}:= \frac{1}{t_0}(\nabla \psi_{\bs k}-\bs k)$ so that
\begin{align}
\label{geo eq number 1}
\lambda^{(t)}:=(\bs k + t\bs\eta_{\bs k})\,\sharp\, C_{\bs k} = \big((1-\textstyle\frac{t}{t_0})\bs k + \textstyle\frac{t}{t_0}\nabla\psi_{\bs k}\big)\,\sharp\, C_{\bs k}.
\end{align}
The particular form of the right hand side of (\ref{geo eq number 1})  implies each measure $\lambda^{(t)}$ has a density $C_{\bs k}^{(t)}$ with respect to Lebesgue measure and the path of measures $\{\lambda^{(t)}\colon t\in[0,t_0] \}$ forms an $L_2$-Wasserstein geodesic with endpoints $C_{\bs k}d\bs k$ and $\tilde C_{\bs k}d\bs k$ at $t=0$ and $t=t_0$ respectively (see Proposition 5.9 in \cite{villani2003topics}). Moreover, $C^{(t)}_{\bs k}$ weakly satisfies (\ref{div flow for geo}) by Theorem 5.34 of \cite{villani2003topics} and the fact that $C^{(t)}_{\bs k} = T_t\,\sharp\, C^{(0)}_{\bs k}$ where $\partial_t T_t(\bs k) = \bs \eta_{\bs k}$.
By  setting $\bs A=t\bs I_d$ in (\ref{eq: local spectral density for nonstationary phase}) one has
\[
\int  \exp\!\big(i(\bs x-\bs y)\!\cdot\! (\bs k + t\bs\eta_{\bs k})\big)\, C_{\bs k}\frac{d\bs k}{(2\pi)^d}
=\int \exp\!\big(i(\bs x-\bs y)\!\cdot\! \bs\omega\big)\,  C^{(t)}_{\bs \omega}\frac{d\bs \omega}{(2\pi)^d}
\]
which implies that for each $t\in[0,t_0]$ the measure $\lambda^{(t)}$ is a locally attainable spectral measure.
\end{proof}

Isotropic spectral densities are an important special case for many statistical applications. The following claim allows considerable simplification for the construction of the vector field $\bs \eta_{\bs k}$ guaranteed by Claim \ref{claim any two spec are attainable}.

\begin{claim}[\textbf{Special case for isotropic spectral densities}]
\label{claim: isotropic spectral densities}
Let $d\geq 1$ be an integer, $t_0 > 0$ be a real number and $C_{|\bs k|}$, $\tilde C_{|\bs k|}$ be two isotropic spectral densities on $\Bbb R^d$ with finite second moments and total mass $\sigma^2$. Define
\begin{align}
\bs \eta_{\bs k}:= \frac{1}{t_0}\Big(\tilde F^{-1}\circ F(|\bs k|)\frac{\bs k}{|\bs k|}-\bs k\Big)
\end{align}
 where $F(r) := \frac{2\pi^{d/2}}{\sigma^2\Gamma(d/2)}\int_0^r \xi^{d-1}C_\xi d\xi$ and $\tilde F(r) := \frac{2\pi^{d/2}}{\sigma^2\Gamma(d/2)}\int_0^r \xi^{d-1}\tilde C_\xi d\xi$. Then for all $t\in[0,t_0]$, $\bs \eta_{\bs k}$ generates the spectral measures $\lambda^{(t)}:= (\bs k + t\bs \eta_{\bs k})\,\sharp\, C_{|\bs k|} $ defined in Claim \ref{claim any two spec are attainable}. In particular, $\{\lambda^{(t)} \colon t\in[0,t_0]\}$ forms a $L_2$-Wasserstein geodesic path of locally attainable spectral densities in $\mathscr C^{C,\bs \eta}$, with endpoints $\lambda^{(0)}=C_{|\bs k|}d\bs k$ and $\lambda^{(t_0)}=\tilde C_{|\bs k|}d\bs k$, where
$\bs k + t\bs \eta_{\bs k}$ is the optimal transport from $\lambda^{(0)}$ to $\lambda^{(t)}$.
\end{claim}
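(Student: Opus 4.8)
The plan is to reduce the entire statement to Claim~\ref{claim any two spec are attainable} by exhibiting the stated $\bs\eta_{\bs k}$ as the one produced by that claim when the optimal transport map between $C_{|\bs k|}d\bs k$ and $\tilde C_{|\bs k|}d\bs k$ happens to be the radial monotone rearrangement. Concretely, set $G(r):=\tilde F^{-1}(F(r))$, using the generalized inverse $\tilde F^{-1}(u):=\inf\{\rho\ge 0:\tilde F(\rho)\ge u\}$ since $\tilde F$ may be flat where $\tilde C$ vanishes, and put $\nabla\psi_{\bs k}:=G(|\bs k|)\,\bs k/|\bs k|$ for $\bs k\neq\bs 0$. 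Then $t_0\bs\eta_{\bs k}=\nabla\psi_{\bs k}-\bs k$ matches exactly the vector field appearing in the proof of Claim~\ref{claim any two spec are attainable}, so it suffices to check the two hypotheses used there: (i) $\nabla\psi_{\bs k}$ is the gradient of a convex function; and (ii) $\nabla\psi_{\bs k}\,\sharp\,C_{|\bs k|}=\tilde C_{|\bs k|}$, equivalently (after normalizing by $\sigma^{-2}$) that it pushes the probability measure $\sigma^{-2}C_{|\bs k|}d\bs k$ forward to $\sigma^{-2}\tilde C_{|\bs k|}d\bs k$. Every remaining assertion — that $\{\lambda^{(t)}\}$ is a locally attainable $L_2$-Wasserstein geodesic, that it satisfies the continuity equation, and that $\bs k+t\bs\eta_{\bs k}$ is the optimal map onto $\lambda^{(t)}$ — then follows verbatim from Claim~\ref{claim any two spec are attainable} together with the displacement-interpolation structure in \cite{villani2003topics}.

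For (ii) I would pass to polar coordinates $\bs k=r\theta$, $r=|\bs k|$, $\theta\in S^{d-1}$, writing $\sigma^{-2}C_{|\bs k|}d\bs k=\mu_C\otimes\mathrm{Unif}(S^{d-1})$, where $\mu_C$ is the radial probability measure with $d\mu_C/dr=\frac{2\pi^{d/2}}{\sigma^2\Gamma(d/2)}r^{d-1}C_r$ and hence cumulative distribution function precisely $F$; similarly for $\tilde C,\tilde F$. Since $C_{\bs k}\in L_1(\Bbb R^d)$, $F$ is continuous with $F(0)=0$ and $F(\infty)=\sigma^{-2}\int C_{\bs k}d\bs k=1$, and the classical one-dimensional rearrangement identity gives $G\,\sharp\,\mu_C=\mu_{\tilde C}$. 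The map $\bs k\mapsto\nabla\psi_{\bs k}$ acts as $(r,\theta)\mapsto(G(r),\theta)$, i.e.\ radially monotone and angle-preserving, so it pushes $\mu_C\otimes\mathrm{Unif}(S^{d-1})$ forward to $\mu_{\tilde C}\otimes\mathrm{Unif}(S^{d-1})=\sigma^{-2}\tilde C_{|\bs k|}d\bs k$, which is (ii); the origin is $C_{\bs k}d\bs k$-null and may be ignored.

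For (i) let $\Psi(r):=\int_0^r G(s)\,ds$ and $\psi_{\bs k}:=\Psi(|\bs k|)$, so that $\nabla\psi_{\bs k}=\Psi'(|\bs k|)\bs k/|\bs k|=G(|\bs k|)\bs k/|\bs k|$ as required. Since $F$ is nondecreasing and $\tilde F^{-1}$ is nondecreasing, $G$ is nonnegative and nondecreasing, hence $\Psi$ is convex and nondecreasing on $[0,\infty)$, which is exactly the condition for the radial function $\psi_{\bs k}=\Psi(|\bs k|)$ to be convex on $\Bbb R^d$. With (i) and (ii) established, Brenier's theorem (as already invoked inside the proof of Claim~\ref{claim any two spec are attainable}, see \cite{villani2003topics}) identifies $\nabla\psi_{\bs k}$ as \emph{the} optimal transport from $\sigma^{-2}C_{|\bs k|}d\bs k$ to $\sigma^{-2}\tilde C_{|\bs k|}d\bs k$, so Claim~\ref{claim any two spec are attainable} applies with this $\psi$ and delivers all stated conclusions, including optimality of the intermediate map $\bs k+t\bs\eta_{\bs k}=(1-t/t_0)\bs k+(t/t_0)\nabla\psi_{\bs k}$ onto $\lambda^{(t)}$. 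The required $L_2(\Bbb R^d,C_{\bs k}d\bs k)$ integrability of $\bs\eta_{\bs k}$ also follows, since $\int|\nabla\psi_{\bs k}|^2C_{\bs k}d\bs k=\int|\bs\omega|^2\tilde C_{\bs\omega}d\bs\omega<\infty$ and $\int|\bs k|^2C_{\bs k}d\bs k<\infty$ by the finite-second-moment hypothesis.

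The only place real care is needed — and where I expect the main obstacle to be — is the degenerate case in which the supports of $C$ or $\tilde C$ have gaps, so that $F$ or $\tilde F$ are not strictly increasing, $G$ is defined only through a generalized inverse, and $\Psi$ may fail to be $C^1$. There one must verify that $G$ is still nondecreasing (so $\Psi$ stays convex) and that $G\,\sharp\,\mu_C=\mu_{\tilde C}$ still holds $\mu_C$-almost everywhere — which is all that Brenier's theorem and Claim~\ref{claim any two spec are attainable} actually use — noting that the set where $\bs k\mapsto\nabla\psi_{\bs k}$ is ambiguous is $C_{\bs k}d\bs k$-null. Measurability of $\bs\eta_{\bs k}$ and the behavior at $\bs k=\bs 0$ are handled in the same way.
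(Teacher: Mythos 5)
Your proposal is correct, and it reverses the logical direction of the paper's argument in a way worth noting. The paper first asserts (from the rotational symmetry of the two laws) that \emph{some} optimal transport of the form $\psi^\prime(|\bs k|)\bs k/|\bs k|$ with $\psi$ convex exists, then shows that its radial profile $\psi^\prime$ must be the one-dimensional optimal transport between the radial marginals, and finally identifies $\psi^\prime=\tilde F^{-1}\circ F$ by uniqueness of one-dimensional monotone rearrangements. You instead \emph{construct} the candidate map $G(|\bs k|)\bs k/|\bs k|$ with $G=\tilde F^{-1}\circ F$, verify directly that it is the gradient of the convex radial potential $\Psi(|\bs k|)$ (using that $G$ is nonnegative and nondecreasing) and that it pushes $C_{|\bs k|}d\bs k$ forward to $\tilde C_{|\bs k|}d\bs k$ (via the polar factorization into the radial rearrangement and the identity on angles), and then invoke Brenier's characterization to conclude optimality. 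Your synthesis route buys two things the paper leaves implicit: it avoids having to justify that the optimal map between rotationally symmetric measures is itself radial (a uniqueness-plus-averaging argument the paper compresses into one sentence), and it makes explicit the $L_2(C_{\bs k}d\bs k)$ integrability of $\bs\eta_{\bs k}$ and the treatment of generalized inverses when $\tilde F$ is not strictly increasing. The paper's route is shorter because it outsources existence to the general theory and only needs the one-dimensional identification. Both are sound; no gap.
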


\begin{proof}
By the proof of Claim \ref{claim any two spec are attainable} it will be sufficient to show that $\tilde F^{-1}\circ F(|\bs k|)\frac{\bs k}{|\bs k|}$ is the optimal transport from $C_{|\bs k|}$ to $\tilde C_{|\bs k|}$.
Let $\bs K$ and $\tilde {\bs K}$ be random vectors in $\Bbb R^d$ with densities $C_{|\bs k|}/\sigma^2$ and $\tilde C_{|\bs k|}/\sigma^2$, respectively. By the distributional rotational symmetry of $\bs K$ there exists a convex $\psi(r):\Bbb R^+ \rightarrow \Bbb R$ such that $\nabla (\psi(|\bs k|))=\psi^\prime(|\bs k|)\frac{\bs k}{|\bs k|}$ is the optimal transport from $\mathscr L\bs K$ to $\mathscr L\tilde{\bs K}$ (where $\mathscr L\bs K$ and $\mathscr L\tilde{\bs K}$ denotes the law, i.e. probability distribution, of $\bs K$). Also notice that $\psi^\prime(r)$ is the optimal transport from $\mathscr L |\bs K|$ to  $\mathscr L|\tilde{\bs K}|$ since $\psi(r)$ is convex and
\begin{align*}
P(\psi^\prime(|\bs K|)\leq r) = P\big(\big|\psi^\prime(|\bs K|)\textstyle\frac{\bs K}{|\bs K|}\big|\leq r\big) = P(|\tilde{\bs K}|\leq r).
\end{align*}
The optimal transport between two univariate random variables (see \cite{villani2003topics}) is given by the composition of the quantile function (of the target measure) and the cumulative distribution function (of the base measure). Therefore the optimal transport from $\mathscr L|\bs K|$ to $\mathscr L|\tilde{\bs K}|$ is given by $\tilde F^{-1}\circ F(r)$ where $F(r)=\frac{2\pi^{d/2}}{\sigma^2\Gamma(d/2)}\int_0^r \xi^{d-1}C_\xi d\xi$ and $\tilde F(r)=\frac{2\pi^{d/2}}{\sigma^2\Gamma(d/2)}\int_0^r \xi^{d-1}\tilde C_\xi d\xi$ are the cumulative distribution functions of $|\bs K|$ and $|\tilde{\bs K}|$, respectively. By the uniqueness of optimal transports one has $\psi^\prime(r)=\tilde F^{-1}\circ F(r)$ and therefore
\[
\nabla (\psi(|\bs k|))=\psi^\prime(|\bs k|)\frac{\bs k}{|\bs k|} = \tilde F^{-1}\circ F(|\bs k|)\frac{\bs k}{|\bs k|}
\]
is the optimal transport from $\mathscr L\bs K$ to $\mathscr L\tilde{\bs K}$,
as was to be shown.
\end{proof}

As a corollary to the above theorem one can obtain partial closed form solutions for $\bs \eta_{\bs k}$ when the spectral densities $C_{\bs k}$ and $\tilde C_{\bs k}$ are both Mat\'ern spectral densities with the same integral and with finite second moments. The form of $\bs \eta_{\bs k}$, in this case, can be computed using the incomplete beta function and the quantiles of beta random variables (which is not technically given in closed form but for which simple Newton method characterizations are guaranteed to converge, see \cite{giner2014monotonically}).

\begin{corollary}[\textbf{Optimal transports between Mat\'ern spectral densities}]
\label{corollary: Optimal transports between Matern spectral densities}
Let $d\geq 1$ be an integer and $t_0,\nu,\tilde \nu, \rho, \tilde \rho,\sigma^2 > 0$ be real numbers such that $\nu,\tilde \nu >1$.
If
\begin{align}
C_{|\bs k|} &= \sigma^22^{d}\pi^{d/2}\frac{\Gamma(\nu + d/2)}{\Gamma(\nu)}\left(\frac{4\nu}{\rho^2}\right)^{\nu}\left(\frac{4\nu}{\rho^2}+|\bs k|^2\right)^{-\nu-d/2} \label{eq: Ck matern}\\
\tilde C_{|\bs k|} &= \sigma^2 2^{d}\pi^{d/2}\frac{\Gamma(\tilde\nu + d/2)}{\Gamma(\tilde\nu)}\left(\frac{4\tilde\nu}{\tilde\rho^2}\right)^{\tilde\nu}\left(\frac{4\tilde\nu}{\tilde\rho^2}+|\bs k|^2\right)^{-\tilde\nu-d/2}\label{eq: tildeCk matern}
\end{align}
 then $F$ and $\tilde F^{-1}$, defined in Claim \ref{claim: isotropic spectral densities}, are given by
\begin{align}
\label{F0 for matern}
F(r) &= I_{r^2/(4\nu / \rho^2 + r^2)}(d/2, \nu) \\
\tilde F^{-1}(u) &= \left(\frac{4\tilde\nu}{\tilde\rho^2}\right)^{1/2}\left(\frac{1}{Q_u(d/2,\tilde\nu)} -1 \right)^{-1/2}
\label{Ft0inv for matern}
\end{align}
where $I_x(p,q)$ is the incomplete beta function and $Q_u(p,q)$ is the quantile function for a univariate $\text{Beta}(p,q)$ random variable evaluated at $u\in (0,1)$.
\end{corollary}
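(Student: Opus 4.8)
The plan is to reduce everything to Claim~\ref{claim: isotropic spectral densities}, which already expresses $F$ and $\tilde F$ as the radial cumulative distribution functions $F(r)=\frac{2\pi^{d/2}}{\sigma^2\Gamma(d/2)}\int_0^r\xi^{d-1}C_\xi\,d\xi$ and $\tilde F(r)=\frac{2\pi^{d/2}}{\sigma^2\Gamma(d/2)}\int_0^r\xi^{d-1}\tilde C_\xi\,d\xi$; all that remains is to evaluate these integrals for the Mat\'ern densities (\ref{eq: Ck matern})--(\ref{eq: tildeCk matern}) and then invert $\tilde F$. First I would check that the hypotheses of Claim~\ref{claim: isotropic spectral densities} are met: since $\xi^{d-1}C_\xi$ decays like $\xi^{-2\nu-1}$ at infinity, the condition $\nu>1$ (and likewise $\tilde\nu>1$) is exactly what makes $\int_0^\infty \xi^2\,\xi^{d-1}C_\xi\,d\xi<\infty$, so $C_{|\bs k|}$ and $\tilde C_{|\bs k|}$ have finite second moments, and the common prefactor $\sigma^2$ gives the two densities a common total mass, matching the setup of Claims~\ref{claim any two spec are attainable} and \ref{claim: isotropic spectral densities}.

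For the core computation, set $a=4\nu/\rho^2$ and substitute $u=\xi^2/(a+\xi^2)$ (equivalently $\xi^2=au/(1-u)$) in the integral defining $F$. A direct calculation shows that $\xi^{d-1}(a+\xi^2)^{-\nu-d/2}\,d\xi$ turns into a constant multiple of $u^{d/2-1}(1-u)^{\nu-1}\,du$, with the powers of $a$ cancelling, and the upper limit $\xi=r$ maps to $u=r^2/(a+r^2)$; hence the integral is proportional to $\int_0^{r^2/(a+r^2)}u^{d/2-1}(1-u)^{\nu-1}\,du$. Collecting the $\Gamma$-function and power-of-$a$ prefactors---here the particular Mat\'ern normalization displayed in (\ref{eq: Ck matern}) is precisely what makes the leading constant reduce to the reciprocal of the $\mathrm{Beta}(d/2,\nu)$ normalizing constant---yields $F(r)=I_{r^2/(a+r^2)}(d/2,\nu)$, which is (\ref{F0 for matern}). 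Equivalently, one may note that a vector with spectral density (\ref{eq: Ck matern}) is a rescaled multivariate Student-$t$ with $2\nu$ degrees of freedom, for which $|\bs k|^2/(a+|\bs k|^2)$ is exactly a $\mathrm{Beta}(d/2,\nu)$ variate, reaching the same conclusion with no integration at all.

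Running the identical argument with $(\tilde\nu,\tilde\rho)$ in place of $(\nu,\rho)$ gives $\tilde F(r)=I_{r^2/(\tilde a+r^2)}(d/2,\tilde\nu)$ with $\tilde a=4\tilde\nu/\tilde\rho^2$. To invert, set $\tilde F(\tilde r)=u$, so that $\tilde r^2/(\tilde a+\tilde r^2)=Q_u(d/2,\tilde\nu)$; solving this linear equation for $\tilde r^2$ and using the identity $(1/q-1)^{-1}=q/(1-q)$ produces $\tilde F^{-1}(u)=(4\tilde\nu/\tilde\rho^2)^{1/2}\bigl(1/Q_u(d/2,\tilde\nu)-1\bigr)^{-1/2}$, which is (\ref{Ft0inv for matern}). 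The only step that calls for care is bookkeeping the multiplicative constants through the change of variables so that the normalization collapses cleanly to the regularized incomplete beta function; everything else is routine.
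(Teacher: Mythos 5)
Your proposal is correct and follows essentially the same route as the paper: verify the hypotheses of Claim~\ref{claim: isotropic spectral densities} (finite second moments from $\nu,\tilde\nu>1$ and equal total mass), apply the change of variables $x=\xi^2/(a+\xi^2)$ to reduce the radial integral to a regularized incomplete beta function, and invert $\tilde F$ algebraically. The paper's proof is identical in substance (it even leaves the inversion step implicit, which you spell out), so no further comparison is needed.
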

\begin{proof}
First notice that $ (2\pi)^d\sigma^2 =\int_{\Bbb R^d} C_{|\bs k|}d\bs k =  \int_{\Bbb R^d} \tilde C_{|\bs k|}d\bs k$ and the constraints $\nu, \tilde \nu>1$ are sufficient to ensure  $C_{|\bs k|}$ and $\tilde C_{|\bs k|}$ have finite second moments. Therefore Claim \ref{claim: isotropic spectral densities} applies. For any $\nu,a>0$, the change of variables $x= y^2/(a+y^2)$ gives
\begin{align*}
\int_0^r \frac{y^{d-1}}{(a+y^2)^{\nu+d/2}} dy = \frac{1}{2a^\nu}\int_0^{\frac{r^2}{a+r^2}} (1-x)^{\nu-1}x^{d/2-1}dx = \frac{B(d/2,\nu)}{2a^\nu}I_{r^2/(a + r^2)}(d/2,\nu).
\end{align*}
Therefore $F(r) := \frac{2\pi^{d/2}}{\sigma^2(2\pi)^d\Gamma(d/2)}\int_0^r \xi^{d-1}C_\xi d\xi = I_{r^2/(4\nu / \rho^2 + r^2)}(d/2,\nu) $
and similarly for $\tilde F(r)$.
This immediately gives (\ref{F0 for matern}) an (\ref{Ft0inv for matern}).
\end{proof}

The Wasserstein structure of the locally attainable spectral models gives a convenient geometric picture for  potential difficulties when estimating local spectra in the nonstationary phase model.
For example, the space of probability distributions (with finite second moments) has positive curvature (in the sense of Aleksandrov's notion of metric curvature) under the $L_2$-Wasserstein metric \cite{ambrosio2008gradient}.
A less precise mathematical illustration of this is the fact is that the two locally attainable spectra, $(\bs k+ \bs A^T \bs\eta_{\bs k})\,\sharp\, C_{\bs k}$ and $(\bs k- \bs A^T \bs\eta_{\bs k})\,\sharp\, C_{\bs k}$, become asymptotically indistinguishable as the entries $\bs A$ become arbitrarily large. Indeed, the probability distributions of $\bs K + \bs A^T\bs\eta_{\bs K}$ and $\bs K - \bs A^T\bs\eta_{\bs K}$ are similar when $\bs K$ is a random vector with unnormalized density $C_{\bs k}$ and the magnitude of the entries of $\bs A$ are large (since $\bs\eta_{\bs k}$ has odd symmetry and $C_{\bs k}$ has even symmetric). One implication of this asymptotic non-identifiability is that estimates of the local spectra can break down when $\bs \theta(\bs x)$ has large local linear fluctuations (i.e. when the entries of $\bs A$ are large) so that the two local models $\bs \theta(\bs x)=\bs A\bs x + \bs b$ and $\bs \theta(\bs x)=-\bs A\bs x + \bs b$ are difficult to distinguish from the data.

A possible numerical tool for probing this breakdown is to analyze the so called \textbf{\textit{cut locus}} associated with $L_2$-Wasserstein geodesics. Informally, the cut locus quantifies the maximal extent of geodesic paths emanating from some point. The cut locus of the geodesics emanating from $C_{\bs k}d\bs k$ can serve as a proxy for when curvature effects will corrupt local estimation of $\bs\theta(\bs x)$ due to the similarity between the two local spectra characterized by the local linear models $\bs \theta(\bs x)=\bs A\bs x + \bs b$ and $\bs \theta(\bs x)=-\bs A\bs x + \bs b$.  If the cut locus starting at $C_{\bs k}d\bs k$ is far from $C_{\bs k}d\bs k$ this implies the geodesic paths are long and the entries of $\bs A$ need to be much larger for curvature difficulties to arise.  In particular, fix $\bs A$ and consider the  locally attainable models $(\bs k+ c\bs A^T \bs\eta_{\bs k})\,\sharp\, C_{\bs k}$ indexed by $c\in \Bbb R$. If there exists a maximal $c_0>0$ such that $\{(\bs k+ c\bs A^T \bs\eta_{\bs k})\,\sharp\, C_{\bs k}\colon c\in[0,c_0]\}$ and $\{(\bs k+ c\bs A^T \bs\eta_{\bs k})\,\sharp\, C_{\bs k}\colon c\in[-c_0,0]\}$ are both $L_2$-Wasserstein geodesic, then nonstationary local linear models of the form $\theta(\bs x) = \pm c\bs A\bs x + \bs b$ are less exposed to curvature effects when $|c|\ll c_0$.
Claim \ref{claim: cut locus}, below, allows one to numerically compute the maximal such cutoff $c_0$ which characterizes, what we call, the {\it symmetric two sided cut locus}.

\begin{claim}
\label{claim: cut locus}
Let $d\geq 1$ be an integer, $c_0 > 0$ be a real number, $\bs A\in \Bbb R^d$, $C_{\bs k}$ is a spectral density on $\Bbb R^d$ with finite second moments and $\bs \eta_{\bs k}\colon \Bbb R^d \rightarrow \Bbb R^d$ which is $L_2(\Bbb R^d)$ integrable with respect to $C_{\bs k}d\bs k$.
Suppose both
$\bs k - c_0\bs A^T\bs \eta_{\bs k}$ and $\bs k + c_0\bs A^T\bs \eta_{\bs k}$
are $C^1$ diffeomorphisms which are gradients of convex functions. Then
$\{(\bs k - c\bs A^T\bs \eta_{\bs k})\,\sharp\,C_{\bs k}\colon c\in[0,c_0]\}$ and $\{(\bs k + c\bs A^T\bs \eta_{\bs k})\,\sharp\,C_{\bs k}\colon c\in[0,c_0]\}$ are paths of absolutely continuous measures which are also $L_2$-Wasserstein geodesics.
\end{claim}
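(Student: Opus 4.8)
The plan is to realize each of the two families as a McCann displacement interpolation between $\mu_0:=C_{\bs k}d\bs k$ and one of its pushforwards, and then read off both conclusions (absolute continuity and the geodesic property) from Brenier's theorem together with the structure theory of $L_2$-Wasserstein geodesics, exactly as in the proof of Claim~\ref{claim any two spec are attainable}. Write $T^{\pm}_{c}(\bs k):=\bs k\pm c\,\bs A^T\bs\eta_{\bs k}$ for $c\in[0,c_0]$, so the two families are $\{T^{\pm}_{c}\,\sharp\,\mu_0:c\in[0,c_0]\}$. It suffices to treat the ``$+$'' family: the ``$-$'' family is the ``$+$'' construction applied to $-\bs\eta_{\bs k}$, which again satisfies all the hypotheses of the claim (the $+c_0$ and $-c_0$ assumptions simply get interchanged, and both are assumed).

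First I would record the elementary identity
\[
T^{+}_{c}=\Big(1-\tfrac{c}{c_0}\Big)\,\mathrm{id}+\tfrac{c}{c_0}\,T^{+}_{c_0},
\]
which exhibits $T^{+}_{c}$ as the linear interpolation between the identity map and $T^{+}_{c_0}$. By hypothesis $T^{+}_{c_0}=\nabla\psi^{+}$ for a convex function $\psi^{+}$; moreover $\mu_0$ is absolutely continuous with finite second moment, and $\mu^{+}_{c_0}:=T^{+}_{c_0}\,\sharp\,\mu_0$ has finite second moment because $C_{\bs k}$ has a finite second moment and $\bs\eta_{\bs k}\in L_2(C_{\bs k}d\bs k)$. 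Therefore Brenier's theorem identifies $T^{+}_{c_0}$ as the unique $L_2$-optimal transport map from $\mu_0$ to $\mu^{+}_{c_0}$, and, by Proposition~5.9 of \cite{villani2003topics}, the curve $t\mapsto\big((1-t)\,\mathrm{id}+t\,T^{+}_{c_0}\big)\,\sharp\,\mu_0$, $t\in[0,1]$, is the constant-speed $L_2$-Wasserstein geodesic joining $\mu_0$ to $\mu^{+}_{c_0}$. Since $(1-t)\,\mathrm{id}+t\,T^{+}_{c_0}=T^{+}_{tc_0}$, the substitution $c=tc_0$ shows $\{T^{+}_{c}\,\sharp\,\mu_0:c\in[0,c_0]\}$ coincides with this geodesic up to an affine rescaling of the parameter, hence is itself an $L_2$-Wasserstein geodesic.

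It remains to check that each $T^{+}_{c}\,\sharp\,\mu_0$ is absolutely continuous. The endpoint $\mu_0$ is absolutely continuous by assumption, and $\mu^{+}_{c_0}$ is absolutely continuous because $T^{+}_{c_0}$ is a $C^1$ diffeomorphism, so the change-of-variables formula applies; the interior measures are then absolutely continuous by McCann's theorem on displacement interpolants of absolutely continuous measures (valid for $t\in[0,1)$). One can also argue this last point by hand: the hypothesis that $\bs k\pm c_0\bs A^T\bs\eta_{\bs k}$ are $C^1$ forces $\bs A^T\bs\eta_{\bs k}$ to be $C^1$, so $T^{+}_{c}=\nabla\Phi^{+}_{c}$ with $\Phi^{+}_{c}:=(1-\tfrac{c}{c_0})\tfrac12|\bs k|^2+\tfrac{c}{c_0}\psi^{+}$, a $C^2$ convex combination of convex functions whose Hessian satisfies $D^2\Phi^{+}_{c}\succeq(1-\tfrac{c}{c_0})\bs I_d\succ 0$ for $c<c_0$; hence $T^{+}_{c}$ is a $C^1$ diffeomorphism (injective by strict monotonicity, surjective by coercivity, local diffeomorphism by the inverse function theorem), and change of variables again gives absolute continuity, the case $c=c_0$ being covered by hypothesis. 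Applying the whole argument once more with $\bs\eta_{\bs k}$ replaced by $-\bs\eta_{\bs k}$ handles the ``$-$'' family.

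I do not anticipate a hard estimate; the only real step is the organizational one of recognizing that the endpoint hypotheses are exactly what is needed to place both paths in the scope of Brenier's theorem and of the displacement-interpolation description of Wasserstein geodesics. The mildest care is needed in (i) the second-moment and $L_2$-integrability bookkeeping, so that the two optimal transport problems are well posed and $W_2(\mu_0,\mu^{\pm}_{c_0})<\infty$, and (ii) invoking the correct form of McCann's absolute-continuity statement and supplementing it at the far endpoint $c=c_0$ with the diffeomorphism hypothesis.
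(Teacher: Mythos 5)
Your proposal is correct and follows essentially the same route as the paper's proof: both exhibit $\bs k\pm c\bs A^T\bs\eta_{\bs k}$ as the linear interpolation $\bigl(1-\tfrac{c}{c_0}\bigr)\bs k+\tfrac{c}{c_0}\nabla\psi^{\pm}_{\bs k}$ between the identity and the gradient of a convex function and then invoke Proposition~5.9 of \cite{villani2003topics}. Your write-up merely supplies details the paper leaves implicit (Brenier's theorem for endpoint optimality, the second-moment bookkeeping, and the absolute continuity of the interior measures), so there is no substantive difference.
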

\begin{proof}
Let $\phi_{\bs k}$ and $\psi_{\bs k}$ be convex functions defined on $\Bbb R^d$ such that $\nabla \phi_{\bs k}=\bs k - c_0\bs A^T\bs \eta_{\bs k}$ and $\nabla \psi_{\bs k}=\bs k + c_0\bs A^T\bs \eta_{\bs k}$. By the diffeomorphic assumption on $\bs k - c_0\bs A^T\bs \eta_{\bs k}$ and $\bs k + c_0\bs A^T\bs \eta_{\bs k}$ there exists two spectral densities $C^{(-c_0)}_{\bs k}$ and $C^{(c_0)}_{\bs k}$ which satisfy
\begin{align*}
C^{(-c_0)}_{\bs k}d\bs k = (\bs k - c_0\bs A^T\bs \eta_{\bs k})\,\sharp\, C_{\bs k} \\
C^{(c_0)}_{\bs k}d\bs k  = (\bs k + c_0\bs A^T\bs \eta_{\bs k})\,\sharp\, C_{\bs k} .
\end{align*}
Now for any $c\in[0,c_0]$ one has
\begin{align}
\bs k - c\bs A^T\bs\eta_{\bs k} &= \big(1-\textstyle\frac{c}{c_0}\big)\bs k + \textstyle\frac{c}{c_0}\nabla\phi_{\bs k}\label{cutlocus eq 1} \\
\bs k + c\bs A^T\bs\eta_{\bs k} &= \big(1-\textstyle\frac{c}{c_0}\big)\bs k + \textstyle\frac{c}{c_0}\nabla\psi_{\bs k}\label{cutlocus eq 2}.
\end{align}
The explicit form of the right hand side of (\ref{cutlocus eq 1}) and (\ref{cutlocus eq 2}) imply $\{ (\bs k - c\bs A^T\bs \eta_{\bs k})\,\sharp\, C_{\bs k} :c\in[0,c_0]\}$  and  $\{ (\bs k + c\bs A^T\bs \eta_{\bs k})\,\sharp\, C_{\bs k} :c\in[0,c_0]\}$  are $L_2$-Wasserstein geodesics of absolutely continuous measures, connecting $C^{(-c_0)}_{\bs k}d\bs k$ to $C_{\bs k}d\bs k $ and  $C_{\bs k}d\bs k$ to $C^{(c_0)}_{\bs k} d\bs k$ respectively (by Proposition 5.9 in \cite{villani2003topics}).
\end{proof}

\subsection{Modeling $\bs \xi_{\bs k}, C_{\bs k}$ and $\bs\eta_{\bs k}$}
\label{section: modeling xi, C and eta}

In this section we give some informal guidance for specifying $\bs \xi_{\bs k}, C_{\bs k}$ and $\bs\eta_{\bs k}$ in the nonstationary spectral phase model (see Definition \ref{def: nonstate phase}). Since the quadratic estimate is  adept at detecting small departures from stationarity---partly due to the accuracy of the variance calculations and the speed at which the quadratic estimate can be simulated under a null stationary model---we focus on the situation where the statistician wants to estimate or detect nonstationary extensions of a stationary model. Further details of this approach can be found in the simulation sections \ref{section: nonstat example d=1} and \ref{section: nonstat example d=2}.

\paragraph{Modeling $C_{\bs k}$ and $\bs\eta_{\bs k}$.} In sections \ref{section: nonstat example d=1} and \ref{section: nonstat example d=2}  we model $\bs\eta_{\bs k}$ implicitly by specifying two spectral densities $C_{\bs k}, \tilde C_{\bs k}$ and require that they both be locally attainable in the nonstationary random field model (note: $C_{\bs k}$ and $\tilde C_{\bs k}$ must have the same $L_1(\Bbb R^d)$ integral). In other words we construct a vector field $\bs \eta_{\bs k}$ from $C_{\bs k}$ and $\tilde C_{\bs k}$ by requiring  $\tilde C_{\bs k}\in \mathscr C^{C,\bs \eta}$. The results of Section \ref{section: Locally attainable spectral densities} show this is possible by setting
\begin{align}
\label{eq: modeling eta}
\bs \eta_{\bs k}:=  \frac{1}{t_0}(\nabla \psi_{\bs k}-\bs k)
\end{align}
 where  $\nabla \psi_{\bs k}$ is the optimal transport from $C_{\bs k}$ to $\tilde C_{\bs k}$ and $t_0>0$ is determined  by the  desired physical units of $\bs \theta(\bs x)$ or is set to balance the bias and variance of the quadratic estimate (more on this in the next paragraph). Corollary \ref{corollary: Optimal transports between Matern spectral densities} seems particularity useful for this approach in that $C_{\bs k}$ can be determined by an overall Mat\'ern fit and $\tilde C_{\bs k}$ can be defined by perturbing the Mat\'ern  parameters in a direction of interest. For example, consider the case where one is interested in detecting nonstationarity arising from spatial variation in the Mat\'ern smoothness parameter $\nu$.  Using the notation given in Corollary \ref{corollary: Optimal transports between Matern spectral densities} one could estimate $\sigma^2$, $\rho$ and  $\nu$ (the parameters of $C_{\bs k}$) by an overall stationary fit, then define  $\tilde\nu := \nu + \epsilon$ and $\tilde \rho := \rho$ (the parameters of $\tilde C_{\bs k}$) for some $\epsilon\in\Bbb R$.

Generally larger values of $t_0$ or smaller values of $\epsilon$ will increase estimation variance and decrease estimation bias. We do not yet have a coherent story for the precise nature the dependence of bias and variance as a function of $t_0$ and $\epsilon$. However,  the cut locus of the $L_2$-Wasserstein geodesics emanating from $C_{\bs k}$  (discussed at the end of Section \ref{section: Locally attainable spectral densities}) can be a useful tool for probing this dependence. For example, given $t_0$ and $\epsilon$ one can compute the maximal $c_0$ which satisfies the antecedent of Claim \ref{claim: cut locus}. This maximal $c_0$ effectively characterizes the \textit{symmetric two-sided cut locus} (c.f. Claim \ref{claim: cut locus}) and heuristically serves to characterize an upper bound on the magnitude of the entries of $\bs A$, beyond which bias is likely to dominate.
This will be explored in more detail in Section \ref{section: nonstat example d=1} as a diagnostic tool for determining values of  $t_0$ and $\epsilon$ that result in  large quadratic estimation bias.

\paragraph{Modeling $\bs \xi_{\bs k}$.} In some cases the spectral multiplier $\bs \xi_{\bs k}$ will be constrained by the physics of a particular application. An example of such a constraint is that $\bs \theta(\bs x)$ be required to be curl free or divergence free. Indeed, a curl free constraint is enforced in the gravitational lensing problem  by setting $\bs \xi_{\bs k}=i\bs k$. In the absence of such constraints one can potentially use $\bs \xi_{\bs k}$ to restrict the possible matrices $\bs A$ which parameterize  the locally attainable spectral models  $\mathscr C^{C,\bs\eta}= \big\{ (\bs k+ \bs A^T \bs\eta_{\bs k})\,\sharp\, C_{\bs k}: \bs A \in \Bbb R^{d\times d}\big\}$.   If $\xi_{\bs k}:= i\bs k$, for example, then $\bs A$ must be of the form $(\partial_{\bs x_p}\partial_{\bs x_q}\phi(\bs x))_{p,q=1}^d = \bs U\bs \Lambda \bs U^T$ where $\bs U$ is a rotation matrix and $\bs \Lambda$ is a diagonal matrix with real entries.

\subsection{Nonstationary phase example $d=1$}

\label{section: nonstat example d=1}

In this section we present a simulation example to illustrate the quadratic estimate of $\bs \theta(\bs x)$, or equivalently the potential $\phi(\bs x)$, when observing a single realization of a nonstationary spectral phase random field $Z(\bs x)$ in dimension $d=1$ (c.f. Definition \ref{def: nonstate phase}).  According to our notational conventions, vector quantities such as  $\bs x, \bs k, \bs \xi_{\bs k},  \bs\eta_{\bs k}, \bs \theta(\bs x)$ are replaced with non-bold scalar notation $x, k, \xi_{k},  \eta_{k}, \theta(x)$ to indicate scalar quantities for $d=1$. There are multiple points we hope to convey with this example. The first is that the quadratic estimate $\hat\phi_\ell$, constructed to detect a spatially varying smoothness parameter, is fast and accurate. The second point is that $C^{\text{var\,}\hat\phi}_{\ell}$ and $C^{\text{bias\,}\hat\phi}_{\ell}$  accurately quantify the empirical variance and bias of $\hat\phi_\ell$. A third point is that the fast approximation to $C^{\text{bias\,}\hat\phi}_{\ell}$, discussed in the last paragraph of Section \ref{SubSection: bias}, is accurate over a wide range of wave numbers. Finally we illustrate qualitative features of the estimation bias which results when $\theta^\prime(x)$ is large enough to exceed the symmetric two-sided cut locus discussed in Section \ref{section: modeling xi, C and eta} (c.f. Claim \ref{claim: cut locus}).

Following the modeling approach outlined in Section \ref{section: modeling xi, C and eta} we first define $C_k$ to be the  Mat\'ern spectral density given in (\ref{eq: Ck matern}) with parameters $(\nu, \rho, \sigma^2):=(2, 0.05, 1)$. Now  $\eta_k$ is defined implicitly by specifying a second spectral density $\tilde C_k$ that is required to be locally attainable within the same nonstationary phase model for $Z(x)$. $\tilde C_k$ is defined to be the  Mat\'ern spectral density given in (\ref{eq: tildeCk matern})  with parameters $(\tilde \nu, \tilde\rho, \sigma^2):=(2.1, 0.05, 1)$. The variance parameter $\sigma^2$ is the same for both $C_k$ and  $\tilde C_k$ as per the necessary requirement for locally attainable spectral densities. Notice that the only difference between the two Mat\'ern models is the fractional smoothness parameter which is set to model nonstationarity in the local smoothness in $Z(x)$.  The parameter $t_0$ used in (\ref{eq: modeling eta}) to determine $\eta_k$ is set to $1.5$ for Figure \ref{Figure 2 zx and var} and $1.5/7$ for Figure \ref{Figure 2 zx and var, extra bias}.

A single ground truth potential $\phi(x)$ is used throughout this section and was simulated from a mean zero stationary Gaussian process with Mat\'ern parameters $(\nu, \rho, \sigma^2):=(5, 1.5, 15^2 / (2\pi)^4)$. The spectral multiplier $\xi_k$ is set to $ik$ so that $\theta(x) = \phi^\prime(x) $.  The derivative $\theta^\prime(x)$, in particular $\phi^{\prime\prime}(x)$, is shown in blue in the middle plot of figures \ref{Figure 2 zx and var} and \ref{Figure 2 zx and var, extra bias}.
The quantities $C^{(0)}_\ell, C^{(1)}_\ell, C^{(2)}_\ell$ and $C^{ZZobs}_\ell$ used to generate  $\hat\phi_\ell$, $C^{\text{var\,}\hat\phi}_{\ell}$ and $C^{\text{bias\,}\hat\phi}_{\ell}$ are determined by (\ref{eq: C(1), C(2) and C(3) for nonstationary phase}) and (\ref{eq: CZZobs for nonstationary phase}). Finally, to avoid potential aliasing issues in the simulated data $Z^{obs}(x)$, the quadratic estimate is set to ignore $10\%$ of Fourier coefficients which are nearest the Nyquist limit by truncating the weights corresponding to those frequency pairs.

\begin{figure}
\includegraphics[height=15cm]{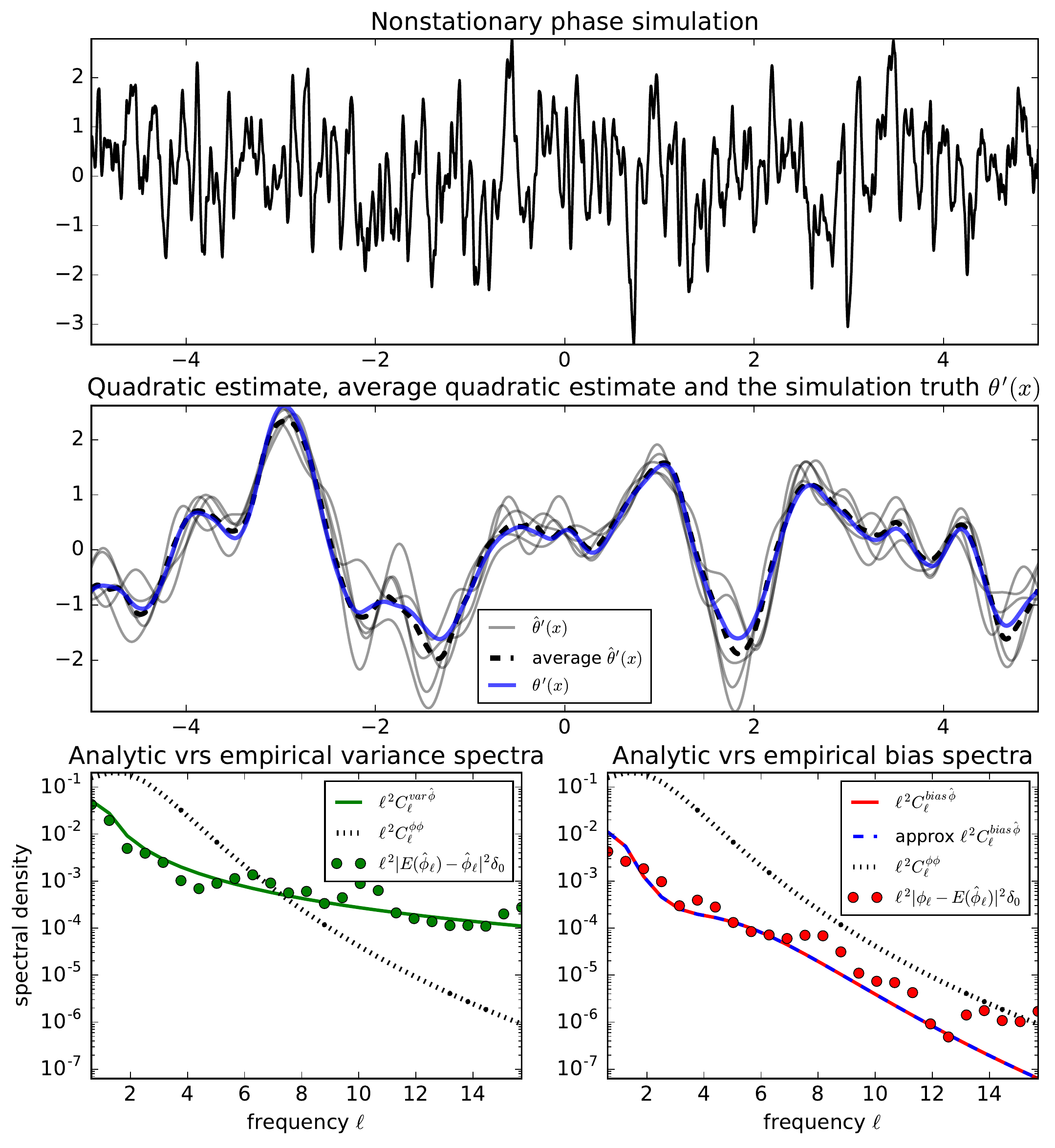}%
\caption{ This figure shows a simulation of a nonstationary spectral phase random field $Z(x)$ (\textbf{top}), the quadratic estimate of  $\theta^\prime(x)$ (\textbf{middle}) and the spectral characterizations of estimation variance (\textbf{bottom-left}) and estimation bias (\textbf{bottom-right}). The details of the simulation are given in Section \ref{section: nonstat example d=1}. In the \textbf{middle plot} the \textbf{blue line} shows the value of $\theta^\prime(x)=\phi^{\prime\prime}(x)$ which characterizes the nonstationarity in $Z(x)$ through Definition \ref{def: nonstate phase}, the  \textbf{grey lines} show different quadratic estimates $\hat\theta^\prime(x)$ each one applied to an independent realizations of $Z(x)$ with the same $\theta(x)$ and the \textbf{dashed line} shows the empirical average of $\hat\theta^\prime(x)$ over 100 such realizations. In the \textbf{bottom-left plot}
The main purpose of this simulation is intended to illustrate the accuracy of the quadratic estimate and the ability of $C^{\text{var\,}\hat\phi}_{\bs \ell}$ and $C^{\text{bias\,}\hat\phi}_{\bs \ell}$ to approximate the empirical variance and bias of the estimate. A secondary goal of this figure is to also show that the fast approximation to $C^{\text{bias\,}\hat\phi}_{\bs \ell}$ is very accurate over a wide range of small wave numbers.}
\label{Figure 2 zx and var}
\end{figure}

The process $Z(x)$ in this section is defined on $[-5,5)$ with periodic boundary conditions.
The observed process $Z^{obs}(x)$ is simulated without additive noise on $10^4$ evenly spaced observation locations in $[-5,5)$.  A simple discrete Riemann sum approximation, at each observed $x$, was used to approximate to the integral (\ref{eq: nonstate phase}) to generate the simulation of $Z^{obs}(x)$. In general, this type of approximation will result in aliasing errors. Generating a distributionally exact simulation of $Z(x)$, without any approximation, appears to be an open problem.   It is not yet clear what impact the aliasing errors, present in our simulation, have on the quadratic estimate. However, we found little empirical difference in the performance of the quadratic estimate when reducing the aliasing errors by increasing the frequency upper limit used for the Riemann sum approximation.

Figure \ref{Figure 2 zx and var} shows the results of our simulation when $t_0$ is set to $1.5$. The \textbf{top plot} shows a simulation of the nonstationary phase process $Z(x)$. The \textbf{blue line} in the \textbf{middle plot} shows $\theta^\prime(x)$ along with $5$ realizations of the quadratic estimate $\hat\theta^\prime(x)$, shown in \textbf{grey}, each one applied to an independent realization of $Z(x)$ with the same $\phi(x)$ . The \textbf{dashed line} in the \textbf{middle plot} shows an empirical estimate to $E(\hat\theta^\prime(x)|\phi)$ based on averaging the quadratic estimate applied to $100$ independent realizations $Z(x)$ all simulated with the same nonstationary potential $\phi(x)$. On average, computing these $100$ quadratic estimates (each based on $10^4$ observations) took $0.008$ seconds on a 2013 MacBook Pro with a 2.3 GHz Intel Core i7 CPU. This illustrates that the quadratic estimate can be computed extremely fast on a dense set of observations. Notice also the estimate is accurate with respect to both variance and bias. Indeed, by comparing signal spectral density $\ell^2 C_\ell^{\phi\phi}$ (\textbf{dotted black line} in \textbf{both bottom plots})  with  $\ell^2 C^{\text{var\,}\hat\phi}_{\ell}$ and $\ell^2 C^{\text{bias\,}\hat\phi}_{\ell}$  (\textbf{green and red lines} respectively) one can see that the signal-to-noise ratio for estimation accuracy per-frequency is significantly greater than $1$ for a large range of wavenumbers. The \textbf{bottom two plots} in Figure \ref{Figure 2 zx and var} show the accuracy of the analytic approximations $\ell^2 C^{\text{var\,}\hat\phi}_{\ell}$ and  $\ell^2 C^{\text{bias\,}\hat\phi}_{\ell}$ for quantifying the empirical variance and bias (\textbf{green and red dots} respectively) computed from the $100$ realizations of $\hat\theta^\prime(x)$. The computation of $C^{\text{var\,}\hat\phi}_{\ell}$ took $0.098$ seconds. The fast approximation to  $\ell^2 C^{\text{bias\,}\hat\phi}_{\ell}$ is plotted with the \textbf{dashed blue line} in the \textbf{bottom right plot}. This approximation can be seen to be very accurate, nearly indistinguishable from the \textbf{red line}, and took only $0.308$ seconds to compute (compared to $99.44$ seconds for computing  $C^{\text{bias\,}\hat\phi}_{\ell}$ directly).

In Figure \ref{Figure 2 zx and var, extra bias} we show another simulation which is similar the one shown in Figure \ref{Figure 2 zx and var} with the exception that $t_0$ is reduced from $1.5$ to $1.5/7$. This has the effect of shrinking the two-sided cut locus (discussed in Section \ref{section: modeling xi, C and eta}). This is  equivalent to scaling $\phi(x)$ by a factor of $7$ which has the effect of dramatically increasing the bias in the quadratic estimate. Indeed, the main point of Figure \ref{Figure 2 zx and var, extra bias} is to illustrate the qualitative features of the quadratic estimate bias when $\phi(x)$ is too large for the linear approximation in (\ref{first order term, intro}) to hold.
Indeed, when the magnitude of the true $\theta^\prime(x)$ exceeds the two-sided cut locus, the estimate $\hat\theta^\prime(x)$ transitions from a low bias estimate to a bias dominated one as seen in the \textbf{middle plot}. This is presumably due to the ability of the two-sided cut locus to identify when the geodesic path of local spectral densities begins to curl in on itself, creating an ill-posed inversion from observed local spectral density to estimate $\phi(x)$.

\begin{figure}
\includegraphics[height=15cm]{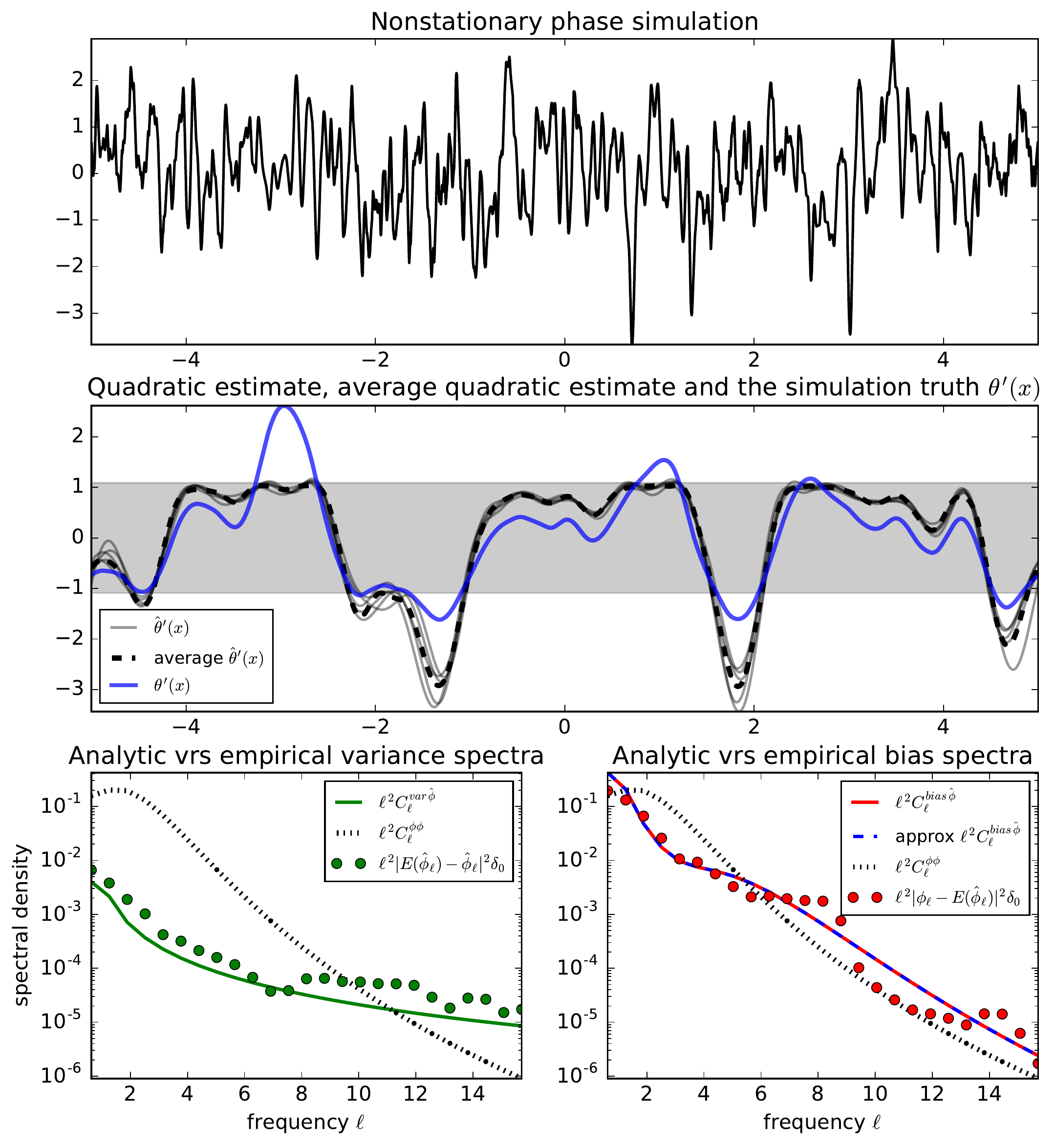}%
\caption{ An illustration of the quadratic estimate bias which results when $\theta^\prime(x)$ is large enough to exceed the \textbf{symmetric two-sided cut locus} discussed Section \ref{section: modeling xi, C and eta} (c.f. Claim \ref{claim: cut locus}). The \textbf{shaded region} shown in the \textbf{middle plot} corresponds to the interior of the symmetric two-sided cut locus. When the true  $\theta^\prime(x)$ (shown in \textbf{blue}) exits the symmetric two-sided cut locus, the quadratic estimate suffers from large bias, attenuating for negative $\theta^\prime(x)$ and amplifying for positive $\theta^\prime(x)$. Note that the only difference between this figure and Figure \ref{Figure 2 zx and var} is the parameter $t_0$ (c.f. Section \ref{section: modeling xi, C and eta}) which was reduced by a factor of $1/7$. This has the effect of shrinking the symmetric two-sided cut locus. All other parameters, including the random seed, are the same.}
\label{Figure 2 zx and var, extra bias}
\end{figure}

\subsection{Nonstationary phase example $d=2$}
\label{section: nonstat example d=2}

In this section we perform a simulation example to illustrate the quadratic estimate applied to a nonstationary spectral phase random field $Z(\bs x)$ in dimension $d=2$. Besides the increase of dimension, there are two main differences in this simulation as compared to the simulation given in Section \ref{section: nonstat example d=1}. The first difference is that the spectral multiplier $\bs \xi_{\bs k}$ is set to $(i\bs k_2, -i\bs k_1)^T$ where $\bs k = (\bs k_1, \bs k_2)$. Therefore $\bs \theta(\bs x) = (\partial_{\bs x_2}\phi(\bs x), -\partial_{\bs x_1}\phi(\bs x))^T$ is a divergent free vector field. The second main difference is that the two spectral densities $C_{\bs k}$ and $\tilde C_{\bs k}$, defined by (\ref{eq: Ck matern}) and (\ref{eq: tildeCk matern}), have different Mat\'ern parameter values as those used in Section \ref{section: nonstat example d=1}. The parameter values for $C_{\bs k}$  are given by $(\nu, \rho, \sigma^2):=(1.5, 0.015, 1)$ and the parameter values for $\tilde C_{\bs k}$ are given by $(\tilde \nu, \tilde\rho, \sigma^2):=(1.7, 0.014, 1)$. Recall that $C_{\bs k}$ and $\tilde C_{\bs k}$ are used to generate $\bs \eta_{\bs k}$ (c.f. Section \ref{section: modeling xi, C and eta}) by requiring both $C_{\bs k}$ and $\tilde C_{\bs k}$ be locally attainable spectral models in $Z(\bs x)$. Therefore the corresponding quadratic estimate is tuned to detect not only a variation in the smoothness of $Z(\bs x)$ but also a corresponding scale change, where the correspondence is related inversely (an increase in smoothness corresponding to a reduction of spatial scale and vice versa).

For this simulation example, the process $Z(\bs x)$  is defined on $[-\pi,\pi)^2$ with periodic boundary conditions and the observed process $Z^{obs}(\bs x)$ is generated without additive noise on a evenly spaced grid of size $400 \times 400$. Just as in Section \ref{section: nonstat example d=1}, a simple discrete Riemann sum approximation, at each observed $\bs x$, was used to approximate to the integral (\ref{eq: nonstate phase}) for generating the simulation of $Z^{obs}(\bs x)$.
The ground truth potential $\phi(\bs x)$, used to generate the nonstationarity in $Z(\bs x)$, is simulated from a mean zero stationary Gaussian process with Mat\'ern parameters $(\nu, \rho, \sigma^2):=(5, 0.3 \pi, 0.5^2)$. Just as in Section \ref{section: nonstat example d=1} the quantities $C^{(0)}_{\bs \ell}, C^{(1)}_{\bs \ell}, C^{(2)}_{\bs \ell}$ and $C^{ZZobs}_{\bs \ell}$ used to generate  $\hat\phi_{\bs\ell}$, $C^{\text{var\,}\hat\phi}_{\bs\ell}$ and $C^{\text{bias\,}\hat\phi}_{\bs \ell}$ are determined by (\ref{eq: C(1), C(2) and C(3) for nonstationary phase}) and (\ref{eq: CZZobs for nonstationary phase}). Finally, the parameter $t_0$ used in (\ref{eq: modeling eta}) to determine $\bs \eta_k$ is set to $1.5$.

Figure \ref{Figure 3} graphically summarizes the simulation results. The \textbf{top left image} shows the quadratic estimate $\hat\phi(\bs x)$ and the \textbf{top right image} shows the ground truth $\phi(\bs x)$. These top images are intended to illustrate the high accuracy of the estimate. The \textbf{bottom right image} shows the data $Z^{obs}(\bs x)$ used in the estimate $\hat\phi(\bs x)$. The \textbf{bottom left plot} shows the radial profile of $|\bs \ell|^2C_{\bs \ell}^{\text{var }\hat\phi}$ (\textbf{solid green}), $|\bs \ell|^2C_{\bs \ell}^{\phi\phi}$ (\textbf{dotted black}) along with the fast approximation to $|\bs \ell|^2C_{\bs \ell}^{\text{bias }\hat\phi}$ (\textbf{dashed blue}) and the corresponding radially averaged empirical mean squared error per wavenumber (\textbf{dotted green}). The computation of $\hat\phi_{\bs \ell}$ and  $C_{\bs \ell}^{\text{var }\hat\phi}$ took $0.25$ seconds and $0.23$ seconds to compute, respectively. The fast approximation to $C_{\bs \ell}^{\text{bias }\hat\phi}$ took $77$ seconds (the exact value of $C_{\bs \ell}^{\text{bias }\hat\phi}$ is not computed in this case since the imputation is intensive and takes on the order of hours in our implementation and is not shown).

%
%
\section{Discusssion}
\label{section: Discussion}

Part of the motivation for this paper is an attempt to construct an extended class of nonstationary random fields, and a corresponding generalized quadratic estimate, which share the same attractive statistical properties of an estimate originally developed for gravitational lensing studies of the Cosmic Microwave Background \cite{hu2001mapping, hu2002mass}.  In doing so we have identified a particular form of nonstationarity, we call {\it local invariance}, which encourages a delicate cancellation of estimation bias. This local invariant property---we believe---is the main source of what makes the gravitational lensing estimates so successful.  Indeed, the generalized quadratic estimate, derived in Section~\ref{section: local invariant}, shares many of the same attractive statistical features as the original gravitational lensing estimate: it is particularly adept at detecting small departures from stationarity and allows fast, accurate quantification of mean square sampling properties.  In Section~\ref{section: NPhase} we focus on a particular subclass of locally invariant nonstationary random fields which are given by a spatially varying spectral phase modulation of a stationary random field. In this work, the theory of optimal transport and the $L_2$-Wasserstein metric play a major role in characterizing the behavior of the set of possible local spectral densities under these models and leads to a natural heuristic for quantifying estimation bias in terms of the Wasserstein geodesic cut locus (see Claim \ref{claim: cut locus}, Section \ref{section: modeling xi, C and eta} and Figure \ref{Figure 2 zx and var, extra bias}).

One of the byproducts of this paper is the understanding that a nonstationary spectral phase can be estimated by analyzing the correlation among the Fourier coefficients of the nonstationary random field $Z(\bs x)$. This was illustrated in Section~\ref{section: NPhase} using a quadratic estimate to reconstruct a spatially varying spectral phase modulation of a stationary random field. Left unanswered, however, is the question of how one simultaneously estimates both the phase and the magnitude of the spectral modulation $A(\bs k, \bs x)$ in model (\ref{eq: intro nonstat phase model}). It appears this line of research has the potential to merge the seminal work of Dahlhaus \cite{dahlhaus1997fitting,dahlhaus2000likelihood} with the generalized quadratic estimate, presented here, for nonstationary estimation within a broad class of nonstationary random fields.

It is also important to mention the fact that we have derived our results under the rather idealized assumption that the observations locations form a dense regular grid and $Z(\bs x)$ has periodic boundary conditions. Extensions to more realistic experimental conditions are not in the scope of this paper but are clearly important for real life applications. The situation is not hopeless, however, since these same features are ubiquitous in measurements of the Cosmic Microwave Background. Despite this, Cosmologists have devised methods which turn the idealized quadratic estimate into a pragmatic statistical tool for probing gravitational lensing (see \cite{namikawa2013bias,van2012measurement,planck2013lensing,planck2015lensing}, for example). This suggests there exist analogous methods which can make the generalized quadratic estimate available to more general observational scenarios.

We finish with a discussion of Assumption \ref{Assumption 2} that stipulates $\bs \theta(\bs x)$ be characterized by a scalar potential $\phi(\bs x)$.  It is yet unclear how one generalizes this assumption, especially in the case where $\bs \theta(\bs x)$ maps into a higher dimensional space $\Bbb R^m$ for $m>d$. Notice that by considering a general $\bs \theta(\bs x)\colon\Bbb R^d \rightarrow \Bbb R^m$ one may redefine $\bs\theta(\bs x)$ by absorbing (i.e. concatenating) the observation locations $\bs x$ into extra coordinates of $\bs \theta(\bs x)$. This generalization enables one to replace the local invariant condition $\text{cov}(Z(\bs x), Z(\bs y))=K(\bs x-\bs y, \bs\theta(\bs x) - \bs \theta(y))$ with the more general condition
\begin{align}
\label{eq: embedding}
\text{cov}(Z(\bs x), Z(\bs y))=K(\bs\theta(\bs x) - \bs \theta(\bs y)).
\end{align}
Random fields $Z(\bs x)$, which have a covariance function of the form (\ref{eq: embedding}), are simply traces of stationary random fields defined on the higher dimensional space $\Bbb R^m$, restricted to the $d$-dimensional parameterized surface $\{\bs\theta(\bs x):\bs x\in\Bbb R^d\}$. Viewed from this perspective, it appears plausible that there exists a deeper, more geometric, picture of local invariance and quadratic estimation. It is not yet clear whether or not this viewpoint is useful, but it is tempting to imagine that the generalized quadratic estimate is simply a manifold embedding estimate in disguise. If such a development bears theoretical fruit, it would be a major step in the direction of a unified statistical theory of nonstationary random fields.

\begin{figure}
\includegraphics[height=15cm]{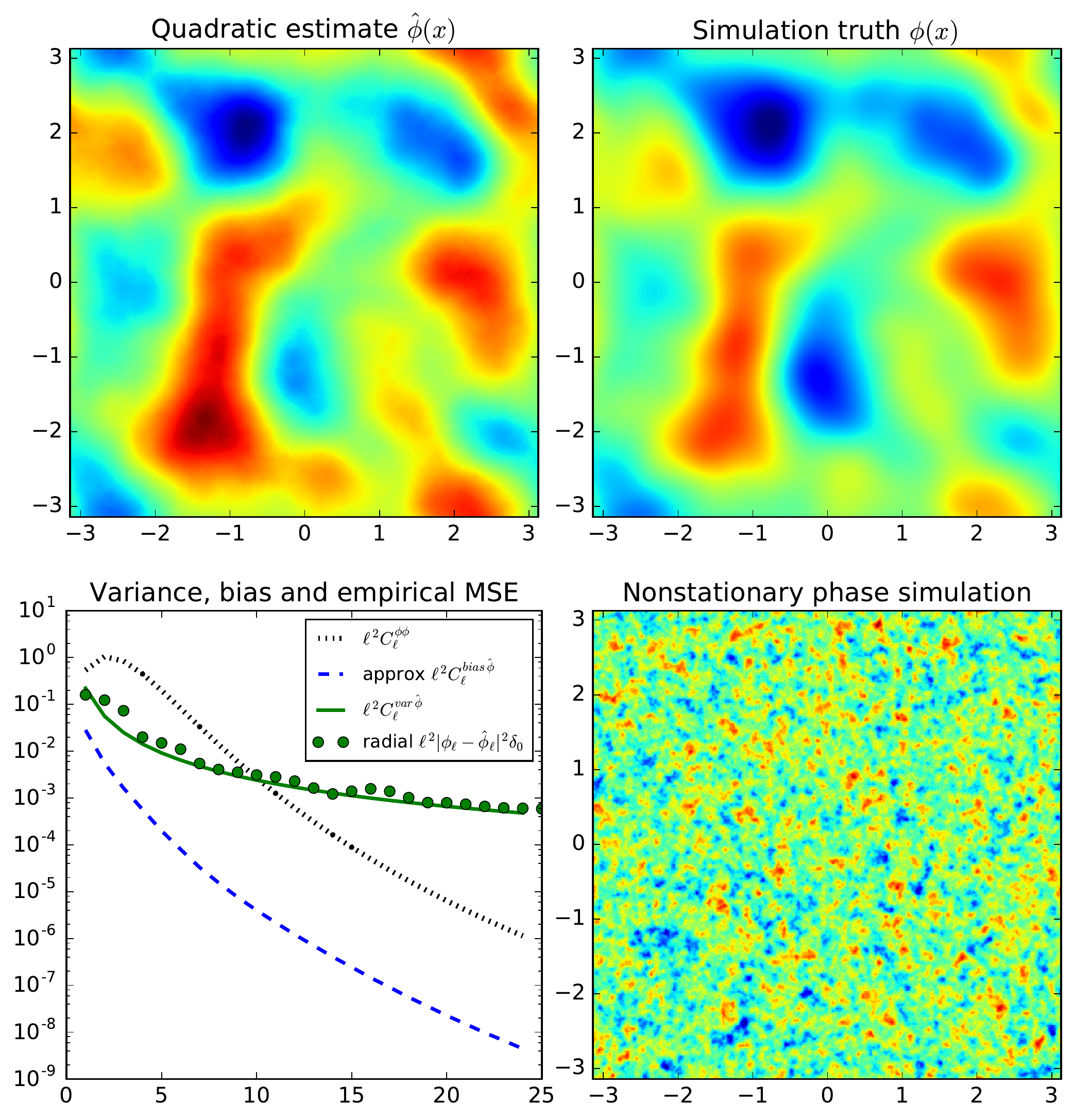}%
\caption{An illustration of the quadratic estimate in $d=2$ for a nonstationary spectral phase random field $Z(\bs x)$. The model for $Z(\bs x)$ is defined to have local variation in both the local smoothness of $Z(\bs x)$ and a local range parameter, where an increase of local smoothness corresponds reduction of local range and vice versa. The \textbf{top left image} shows the quadratic estimate $\hat\phi(\bs x)$ with the  ground truth $\phi(\bs x)$ shown in the \textbf{top right image} and the data shown in the \textbf{bottom right image}.  The \textbf{bottom left plot} shows the radial profile of  $|\bs \ell|^2C_{\bs \ell}^{\text{var }\hat\phi}$ (\textbf{solid green}), $|\bs \ell|^2C_{\bs \ell}^{\phi\phi}$ (\textbf{dotted black}) along with the fast approximation to $|\bs \ell|^2C_{\bs \ell}^{\text{bias }\hat\phi}$ (\textbf{dashed blue}) and the corresponding radially averaged empirical mean squared error per wavenumber (\textbf{dotted green}). See Section \ref{section: nonstat example d=2} for further simulation details.
 }
\label{Figure 3}
\end{figure}

%
%

\bibliography{paper}

%
%

\appendix

%
%
\section{Detailed derivations}\label{section: Detailed Proofs}

%
%

\begin{claim}
\label{claim: first order expansion}
Let $\bs\theta(\bs x)\colon \Bbb R^d \rightarrow \Bbb R^d$ be a vector field and $Z(\bs x)$ be a random field which satisfies
$E\big(Z(\bs x)Z(\bs y)|\bs \theta(\cdot)\big)= C^{(0)}(\bs x-
\bs y) + \bs C^{(1)}(\bs x-\bs y)\cdot(\bs\theta(\bs x)-\bs\theta(\bs y)) +  \mathcal O(\bs\theta^2)$. Then
\begin{align}
\label{eq in claim: first order expansion}
E(Z_{\bs k+\bs \ell}Z_{-\bs k}|\bs \theta(\cdot)) &= \bs\theta_{\bs \ell}\cdot\big(\bs C^{(1)}_{\bs k} - \bs C_{\bs k+\bs \ell}^{(1)}\big) + \mathcal O(\bs\theta^2)
\end{align}
when $\bs\ell\neq 0$.
\end{claim}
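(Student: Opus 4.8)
The plan is to compute $E(Z_{\bs k+\bs \ell}Z_{-\bs k}|\bs\theta(\cdot))$ head-on by writing both Fourier transforms as integrals, substituting the assumed expansion of the conditional covariance, and evaluating the resulting three integrals by change of variables. Using the convention from the Notation section, $Z_{\bs m}=\int e^{-i\bs x\cdot\bs m}Z(\bs x)\frac{d\bs x}{(2\pi)^{d/2}}$, so that
\begin{align*}
E(Z_{\bs k+\bs \ell}Z_{-\bs k}|\bs\theta(\cdot))=\int\!\!\int e^{-i\bs x\cdot(\bs k+\bs \ell)}e^{i\bs y\cdot\bs k}\,E\big(Z(\bs x)Z(\bs y)|\bs\theta(\cdot)\big)\,\frac{d\bs x\,d\bs y}{(2\pi)^d}.
\end{align*}
First I would insert $E(Z(\bs x)Z(\bs y)|\bs\theta(\cdot))=C^{(0)}(\bs x-\bs y)+\bs C^{(1)}(\bs x-\bs y)\cdot(\bs\theta(\bs x)-\bs\theta(\bs y))+\mathcal O(\bs\theta^2)$ and treat the three pieces separately.

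For the zeroth-order piece, substituting $\bs u=\bs x-\bs y$ and integrating out $\bs y$ produces a factor $\int e^{-i\bs y\cdot\bs \ell}\frac{d\bs y}{(2\pi)^{d/2}}=(2\pi)^{d/2}\delta_{\bs \ell}$, which vanishes because $\bs \ell\neq\bs 0$; this is precisely the Bochner-type observation already used in the body of the paper. For the first-order piece, I would split $\bs\theta(\bs x)-\bs\theta(\bs y)$ into its two summands. In the term with $\bs\theta(\bs x)$, the substitution $\bs y=\bs x-\bs u$ factorizes the exponent as $e^{-i\bs x\cdot\bs \ell}e^{-i\bs u\cdot\bs k}$, so the double integral decouples into $\big(\int e^{-i\bs x\cdot\bs \ell}\bs\theta(\bs x)\frac{d\bs x}{(2\pi)^{d/2}}\big)\cdot\big(\int e^{-i\bs u\cdot\bs k}\bs C^{(1)}(\bs u)\frac{d\bs u}{(2\pi)^{d/2}}\big)=\bs\theta_{\bs \ell}\cdot\bs C^{(1)}_{\bs k}$. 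In the term with $-\bs\theta(\bs y)$, the substitution $\bs x=\bs y+\bs u$ factorizes the exponent as $e^{-i\bs y\cdot\bs \ell}e^{-i\bs u\cdot(\bs k+\bs \ell)}$, giving $-\bs\theta_{\bs \ell}\cdot\bs C^{(1)}_{\bs k+\bs \ell}$. Summing yields the claimed leading term $\bs\theta_{\bs \ell}\cdot(\bs C^{(1)}_{\bs k}-\bs C^{(1)}_{\bs k+\bs \ell})$, and the remainder integral is the Fourier transform of an $\mathcal O(\bs\theta^2)$ function of $(\bs x,\bs y)$, hence again $\mathcal O(\bs\theta^2)$.

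The computation itself is routine change of variables; the only delicate points are bookkeeping. I would need to keep the $(2\pi)^{d/2}$ normalizations consistent throughout, interpret $\delta_{\bs \ell}$ correctly in the periodic convention adopted in the Notation section (where $d\bs k$ is identified with $(2\pi/L)^d$), and be careful about exactly in what sense the $\mathcal O(\bs\theta^2)$ bound on the remainder is taken so that it genuinely survives the Fourier transform — this last point is where the argument is least mechanical, since it amounts to a continuity/boundedness statement about the transform acting on the remainder term of the Taylor expansion of $K$. Everything else follows directly from the two substitutions above.
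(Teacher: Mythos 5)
Your computation is correct and follows essentially the same route as the paper's proof: both take the Fourier transform of each term of the expansion in $\bs x$ and $\bs y$, use the change of variables $\bs u = \bs x - \bs y$ to decouple the convolution structure, kill the zeroth-order term via $\delta_{\bs\ell}$ with $\bs\ell\neq\bs 0$, and obtain $\bs\theta_{\bs\ell}\cdot\bs C^{(1)}_{\bs k}$ and $-\bs\theta_{\bs\ell}\cdot\bs C^{(1)}_{\bs k+\bs\ell}$ from the two summands of $\bs\theta(\bs x)-\bs\theta(\bs y)$. The only cosmetic difference is that the paper works at general frequencies $(\bs\ell_1,\bs\ell_2)$ and substitutes $\bs\ell_1=\bs k+\bs\ell$, $\bs\ell_2=-\bs k$ at the end, whereas you substitute from the start.
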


\begin{proof}
The Fourier transform $C^{(0)}(\bs x-\bs y)$, with respect to $\bs x$ and $\bs y$, gives  $\delta_{\bs \ell_1 + \bs \ell_2} {(2\pi)}^{d/2} C^{(0)}_{\bs \ell_1}$.
Similarly, the Fourier transform of $\bs\theta(\bs x)\cdot \bs C^{(1)}(\bs x-\bs y)$ and $- \bs\theta(\bs y)\cdot \bs C^{(1)}(\bs x-\bs y)$, with respect to $\bs x$ and $\bs y$, gives $\bs\theta_{\bs\ell_1+\bs\ell_2}\cdot  \bs C^{(1)}_{-\bs\ell_2}$ and $-\bs\theta_{\bs \ell_1+\bs \ell_2} \cdot  \bs C^{(1)}_{\bs\ell_1}$ respectively.
Summing these three terms gives
\begin{align*}
E\big(Z_{\bs \ell_1}Z_{\bs \ell_2}|\bs \theta(\cdot)\big) &= {(2\pi)}^{d/2}C^{(0)}_{\bs \ell_1} \delta_{\bs \ell_1+\bs \ell_2}
+ \bs\theta_{\bs \ell_1+\bs \ell_2}\cdot\big(\bs C^{(1)}_{-\bs \ell_2} -\bs C_{\bs \ell_1}^{(1)}\big) + \mathcal O(\bs\theta^2).
\end{align*}
Replacing $\bs \ell_1$ with  $\bs k+\bs \ell$  and $\bs \ell_2$ with $-\bs k$ finishes the derivation.
\end{proof}

%
%

\begin{claim}\label{appendix, claim: quad est derivations}
Suppose assumptions \ref{Assumption 4}, \ref{Assumption 2} and \ref{Assumption 3} hold. Then the first order unbiased quadratic estimate of $\phi_{\bs \ell}$, which corresponds to an approximate inverse variance weighted averaging of $Z_{\bs k+\bs \ell}^{obs}Z_{-\bs k}^{obs}$, has the form
    \begin{align}
        \label{appendix-claim def of quad est}
        \hat\phi_{\bs \ell} &=   A_{\bs \ell}\sum_{p=1}^d \bs\xi^*_{p,\bs\ell} \int
        e^{-i\bs x\cdot\bs\ell}
        \mathscr A(\bs x)\mathscr B_{p}(\bs x) \frac{d\bs x}{{(2\pi)}^{d/2}}
    \end{align}
    where $\mathscr A_{\bs \ell}:= Z^{obs}_{\bs \ell}/ C^{ZZobs}_{\bs \ell}$,  $\mathscr B_{p,\bs\ell} := i 2 \textit{\,imag}(\bs C^{(1)}_{p,\bs \ell}) Z^{obs}_{\bs\ell} / C^{ZZobs}_{\bs \ell}$ and
    the normalizing constant
    $A_{\bs\ell}$ is given by
    \begin{align}
        \label{appendix-claim def of Aℓinv}
        A_{\bs\ell}^{-1} &= \sum_{p,q = 1}^d\bs\xi_{p,\bs \ell}\bs\xi^*_{q,\bs \ell}\int  e^{-i\bs x\cdot\bs\ell} \left[2\mathscr A_{p,q}(\bs x)\mathscr B(\bs x)  - \mathscr C_{p}(\bs x)\mathscr C_{q}(\bs x) -  \mathscr D_{p}(\bs x)\mathscr D_{q}(\bs x)\right]  \frac{d\bs x}{{(2\pi)}^{d/2}}
    \end{align}
    where $\mathscr A_{p,q, \bs \ell}:= \bs C^{(1)}_{p,\bs\ell} \bs C^{{(1)}^*}_{q,\bs\ell}  / C^{ZZobs}_{\bs\ell}$, $\mathscr B_{\bs\ell}:= 1/C^{ZZobs}_{\bs\ell}$, $\mathscr C_{p,\bs\ell}:= \bs C^{(1)}_{p,\bs\ell}  / C^{ZZobs}_{\bs\ell}$ and $\mathscr D_{p,\bs\ell}:= \bs C^{{(1)}^*}_{p,\bs\ell}  / C^{ZZobs}_{\bs\ell}$.
\end{claim}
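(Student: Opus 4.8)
The plan is to prove the claim in two stages: first recover the $\bs k$-integral forms (\ref{explicit form of the qe in body of paper}) and (\ref{in text eq of the normalizing constant}) of $\hat\phi_{\bs \ell}$ and $A_{\bs \ell}^{-1}$ from the "approximate inverse-variance weighting" prescription, and then convert those $\bs k$-integrals into the alternating pixel/Fourier products (\ref{appendix-claim def of quad est})--(\ref{appendix-claim def of Aℓinv}) by repeated use of the convolution theorem together with the parity of the ingredients. For the first stage I would start from the generic statistic $\hat\phi_{\bs \ell}=A_{\bs \ell}\int w_{\bs k,\bs \ell}Z^{obs}_{\bs k+\bs \ell}Z^{obs}_{-\bs k}\frac{d\bs k}{(2\pi)^{d/2}}$. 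Combining Claim \ref{claim: first order expansion} with the scalar-potential parametrization of Assumption \ref{Assumption 2}, which gives $\bs\theta_{\bs \ell}\cdot(\bs C^{(1)}_{\bs k}-\bs C^{(1)}_{\bs k+\bs \ell})=\phi_{\bs \ell}f_{\bs k,\bs \ell}$ with $f_{\bs k,\bs \ell}:=\bs\xi_{\bs \ell}\cdot\bs C^{(1)}_{\bs k}-\bs\xi_{\bs \ell}\cdot\bs C^{(1)}_{\bs k+\bs \ell}$, and using that the stationary noise $N$ of Assumption \ref{Assumption 4} affects only the $\bs\ell=\bs 0$ mode, yields $E(Z^{obs}_{\bs k+\bs \ell}Z^{obs}_{-\bs k}\mid\phi)=\phi_{\bs \ell}f_{\bs k,\bs \ell}+\mathcal O(\phi^2)$, i.e.\ (\ref{eq: first order cov intro}). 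A Wick expansion under the marginal stationary model with spectral density $C^{ZZobs}$ shows that for $\bs\ell\neq\bs 0$ each $Z^{obs}_{\bs k+\bs \ell}Z^{obs}_{-\bs k}$ has variance $\approx C^{ZZobs}_{\bs k+\bs \ell}C^{ZZobs}_{\bs k}$ and, to the same order, negligible correlation across distinct $\bs k$; regarding these quadratic forms as approximately independent noisy measurements of $\phi_{\bs \ell}$ with mean response $f_{\bs k,\bs \ell}$, the minimum-variance choice is $w_{\bs k,\bs \ell}\propto f_{\bs k,\bs \ell}^*/(C^{ZZobs}_{\bs k+\bs \ell}C^{ZZobs}_{\bs k})$, normalized so that $E(\hat\phi_{\bs \ell}\mid\phi)=\phi_{\bs \ell}+\mathcal O(\phi^2)$, which produces (\ref{explicit form of the qe in body of paper}) and $A_{\bs \ell}^{-1}=\int|f_{\bs k,\bs \ell}|^2/(C^{ZZobs}_{\bs k+\bs \ell}C^{ZZobs}_{\bs k})\frac{d\bs k}{(2\pi)^{d/2}}$.

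The second stage is the substance of the claim. I would first record the parity facts: $C^{ZZobs}_{\bs k}$ is even (it is a stationary spectral density), and each $\bs C^{(1)}_{j,\bs k}$ is purely imaginary and odd --- equivalently $\bs C^{(1)*}_{j,\bs k}=\bs C^{(1)}_{j,-\bs k}=-\bs C^{(1)}_{j,\bs k}$ --- since $\bs C^{(1)}$ is real and antisymmetric about the origin; in particular $i2\,\textrm{imag}(\bs C^{(1)}_{j,\bs k})=2\bs C^{(1)}_{j,\bs k}$, so $\mathscr B_{p,\bs k}=2\bs C^{(1)}_{p,\bs k}\mathscr A_{\bs k}$, $\mathscr A_{p,q,\bs k}$ is even, and $\mathscr C_{p,\bs k},\mathscr D_{p,\bs k}$ are odd. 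I would also use the convolution identity $\int e^{-i\bs x\cdot\bs \ell}F(\bs x)G(\bs x)\frac{d\bs x}{(2\pi)^{d/2}}=\int F_{\bs m}G_{\bs \ell-\bs m}\frac{d\bs m}{(2\pi)^{d/2}}$ in the paper's normalization. For $\hat\phi_{\bs \ell}$: writing $Z^{obs}_{\bs k+\bs \ell}/C^{ZZobs}_{\bs k+\bs \ell}=\mathscr A_{\bs k+\bs \ell}$ and $Z^{obs}_{-\bs k}/C^{ZZobs}_{\bs k}=\mathscr A_{-\bs k}$ (by evenness), split $f_{\bs k,\bs \ell}^*=\sum_p\bs\xi^*_{p,\bs \ell}(\bs C^{(1)*}_{p,\bs k}-\bs C^{(1)*}_{p,\bs k+\bs \ell})$ into its two pieces; the substitution $\bs k\mapsto-\bs m$ in the first and $\bs k\mapsto\bs m-\bs \ell$ in the second, with the parity identities, turn both pieces into $\int\bs C^{(1)}_{p,\bs m}\mathscr A_{\bs m}\mathscr A_{\bs \ell-\bs m}\frac{d\bs m}{(2\pi)^{d/2}}$, so their sum is $\int\mathscr B_{p,\bs m}\mathscr A_{\bs \ell-\bs m}\frac{d\bs m}{(2\pi)^{d/2}}=\int e^{-i\bs x\cdot\bs \ell}\mathscr A(\bs x)\mathscr B_p(\bs x)\frac{d\bs x}{(2\pi)^{d/2}}$, giving (\ref{appendix-claim def of quad est}).

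For $A_{\bs \ell}^{-1}$: expand $|f_{\bs k,\bs \ell}|^2=\sum_{p,q}\bs\xi_{p,\bs \ell}\bs\xi^*_{q,\bs \ell}(\bs C^{(1)}_{p,\bs k}-\bs C^{(1)}_{p,\bs k+\bs \ell})(\bs C^{(1)*}_{q,\bs k}-\bs C^{(1)*}_{q,\bs k+\bs \ell})$ into four terms and divide by $C^{ZZobs}_{\bs k+\bs \ell}C^{ZZobs}_{\bs k}$; the two diagonal terms $\bs C^{(1)}_{p,\bs k}\bs C^{(1)*}_{q,\bs k}$ and $\bs C^{(1)}_{p,\bs k+\bs \ell}\bs C^{(1)*}_{q,\bs k+\bs \ell}$ each reduce, after $\bs k\mapsto-\bs k$ (resp.\ $\bs k\mapsto\bs k+\bs \ell$) and evenness of $\mathscr A_{p,q}$ (resp.\ $\mathscr B$), to $\int\mathscr A_{p,q,\bs m}\mathscr B_{\bs \ell-\bs m}\frac{d\bs m}{(2\pi)^{d/2}}$ --- accounting for the factor $2$ in front of $\mathscr A_{p,q}(\bs x)\mathscr B(\bs x)$; the cross term $\bs C^{(1)}_{p,\bs k}\bs C^{(1)*}_{q,\bs k+\bs \ell}$, after rewriting $\bs C^{(1)*}_{q,\bs k+\bs \ell}=\bs C^{(1)}_{q,-\bs k-\bs \ell}$ (so the pair becomes $\mathscr C_{p,\bs k}\mathscr C_{q,-\bs k-\bs \ell}$) and then $\bs k\mapsto-\bs k$ (the oddness of $\mathscr C$ supplying two cancelling sign flips), becomes $-\int\mathscr C_{p,\bs m}\mathscr C_{q,\bs \ell-\bs m}\frac{d\bs m}{(2\pi)^{d/2}}$, and the cross term $\bs C^{(1)*}_{q,\bs k}\bs C^{(1)}_{p,\bs k+\bs \ell}$, after rewriting $\bs C^{(1)}_{p,\bs k+\bs \ell}=\bs C^{(1)*}_{p,-\bs k-\bs \ell}$, becomes $-\int\mathscr D_{p,\bs m}\mathscr D_{q,\bs \ell-\bs m}\frac{d\bs m}{(2\pi)^{d/2}}$; a final application of the convolution identity gives (\ref{appendix-claim def of Aℓinv}).

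The main obstacle is the bookkeeping in this second stage: matching the arguments $\bs k$ and $\bs k+\bs \ell$ to the convolution arguments $\bs m$ and $\bs \ell-\bs m$ forces a specific sequence of variable changes, and at each one the parity rules for $\bs C^{(1)}$ (and the induced parity of $\mathscr A_{p,q}$, $\mathscr C_p$, $\mathscr D_p$) flip signs that must cancel exactly, so that the four pieces of $|f_{\bs k,\bs \ell}|^2$ reassemble into precisely $2\mathscr A_{p,q}\mathscr B-\mathscr C_p\mathscr C_q-\mathscr D_p\mathscr D_q$ and not a mislabelled variant --- in particular one must keep one cross term in the form $\bs C^{(1)}_{p,\cdot}$ and convert the other to $\bs C^{(1)*}_{p,\cdot}$ so that one lands on $\mathscr C\mathscr C$ and the other on $\mathscr D\mathscr D$ rather than a mixed $\mathscr C\mathscr D$. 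A secondary point worth stating carefully is the exact sense in which the weights are "approximately inverse-variance": the neglected quantities are the off-diagonal (in $\bs k$) correlations of the quadratic forms and the $\mathcal O(\phi^2)$ corrections, so optimality --- and hence the normalization $A_{\bs \ell}$ --- is only claimed to leading order, consistent with the $\mathcal O(\phi)$ slack already displayed in (\ref{in text eq of the normalizing constant}).
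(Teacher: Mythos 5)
Your proposal is correct and follows essentially the same route as the paper's proof: the first stage (first-order unbiasedness from Claim \ref{claim: first order expansion}, Wick expansion of the Gaussian part of the trispectrum with the off-diagonal $\delta_{2\bs k+\bs \ell}$ term discarded, and inverse-variance weighting) matches the paper's derivation of (\ref{the basic form of the quadratic estimator in the proof})--(\ref{first quad est form in proof}), and the second stage (four-term expansion of $|\bs\xi_{\bs\ell}\cdot\bs f_{\bs k,\bs\ell}|^2$, the Hermitian/odd parity of $\bs C^{(1)}_{\bs k}$, and the convolution theorem) reproduces (\ref{Aell derivation 11}) and the pixel-domain forms exactly. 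The only cosmetic difference is that you invoke the purely-imaginary property of $\bs C^{(1)}_{p,\bs k}$ up front to write $\mathscr B_{p,\bs k}=2\bs C^{(1)}_{p,\bs k}\mathscr A_{\bs k}$, whereas the paper combines $\mathscr D_{p}-\mathscr C_{p}$ into $i2\,\textrm{imag}(\bs C^{(1)}_{p,\bs\ell})$ at the end; these are equivalent.
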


\begin{proof}
By Claim \ref{claim: first order expansion} we have that
\begin{align}
\label{eq: first order appendix behavior of ZZ}
E\big(Z^{obs}_{\bs k+\bs \ell}Z^{obs}_{-\bs k}\big)
= \phi_{\bs \ell}\, (\bs \xi_{\bs \ell}\cdot \bs f_{\bs k,\bs\ell})+ \mathcal O(\phi^2)
\end{align}
 when $\bs \ell\neq 0$ where $\bs f_{\bs k,\bs \ell}:= \bs C_{\bs k}^{(1)} -  \bs C_{\bs k+\bs \ell}^{(1)}$.
 Therefore the quadratic estimate, as a weighted average of the first order unbiased terms $Z^{obs}_{\bs k+\bs \ell}Z^{obs}_{-\bs k}/(\bs \xi_{\bs \ell}\cdot \bs f_{\bs k,\bs\ell})$, can be written in the form
\begin{align}
    \label{the basic form of the quadratic estimator in the proof}
\hat\phi_{\bs\ell}
& = \int w_{\bs k,\bs \ell}\frac{Z^{obs}_{\bs k+\bs \ell}Z^{obs}_{-\bs k}}{\bs \xi_{\bs \ell}\cdot \bs f_{\bs k,\bs\ell}} \frac{d\bs k}{{(2\pi)}^{d/2}}
\end{align}
where $w_{\bs k,\bs\ell}\geq 0$ are normalized so that $\hat\phi_{\bs\ell}$ has expected value $\phi_{\bs\ell} + \mathcal O(\phi^2)$ using (\ref{eq: first order appendix behavior of ZZ}). Assuming $\bs\ell\neq 0$, the Gaussian part of the variance of $Z^{obs}_{\bs k+\bs \ell}Z^{obs}_{-\bs k}/(\bs \xi_{\bs \ell}\cdot \bs f_{\bs k,\bs\ell})$ can be computed as follows
\begin{align*}
\text{var}\left(\frac{Z^{obs}_{\bs k+\bs \ell}Z^{obs}_{-\bs k}}{\bs \xi_{\bs \ell}\cdot \bs f_{\bs k,\bs\ell}}
\right)
& = \frac{1}{|\bs \xi_{\bs \ell}\cdot \bs f_{\bs k,\bs\ell}|^2}\Big[E(Z^{obs}_{\bs k+\bs\ell}Z^{obs}_{-\bs k}Z^{obs}_{-\bs k-\bs\ell}Z^{obs}_{\bs k})  - E(Z^{obs}_{\bs k+\bs\ell}Z^{obs}_{-\bs k})E(Z^{obs}_{-\bs k-\bs\ell}Z^{obs}_{\bs k}) \Big] \\
&\approx \frac{1}{|\bs \xi_{\bs \ell}\cdot \bs f_{\bs k,\bs\ell}|^2}\underbrace{\Bigl[E(Z^{obs}_{\bs k+\bs\ell}Z^{obs}_{-\bs k-\bs\ell})E(Z^{obs}_{\bs k}Z^{obs}_{-\bs k})  + E(Z^{obs}_{\bs k+\bs\ell}Z^{obs}_{\bs k})E(Z^{obs}_{-\bs k}Z^{obs}_{-\bs k-\bs\ell}) \Bigr]}_{\textit{only keeping the Gaussian part of the trispectrum}} \\
& = \frac{C^{ZZobs}_{\bs k+\bs\ell} C^{ZZobs}_{\bs k}}{|\bs \xi_{\bs \ell}\cdot \bs f_{\bs k,\bs\ell}|^2}\big[\delta_{\bs 0}^2   + \delta_{2\bs k+\bs\ell}^2 \big].
\end{align*}
where $C^{ZZobs}_{\bs\ell}$ denotes the spectral density of $Z^{obs}(\bs x)$ marginalized over $\phi(\cdot)$.
If we ignore the term $\delta_{2\bs k+\bs\ell}^2$, which only activates at the point $\bs k = -\bs\ell/2$, then by defining $w_{\bs k,\bs \ell}$ in (\ref{the basic form of the quadratic estimator in the proof}) to be proportional to the approximate inverse (Gaussian part of the) variance of $Z^{obs}_{\bs k+\bs \ell}Z^{obs}_{-\bs k}/(\bs \xi_{\bs \ell}\cdot \bs f_{\bs k,\bs\ell})$ one has
\begin{align}
\hat\phi_{\bs\ell}
& = A_{\bs\ell}\int \frac{|\bs \xi_{\bs \ell}\cdot \bs f_{\bs k,\bs\ell}|^2}{C^{ZZobs}_{\bs k+\bs \ell} C^{ZZobs}_{\bs k}}\frac{Z^{obs}_{\bs k+\bs \ell}Z^{obs}_{-\bs k}}{\bs \xi_{\bs \ell}\cdot \bs f_{\bs k,\bs\ell}} \frac{d\bs k}{{(2\pi)}^{d/2}} \nonumber\\
& =  A_{\bs\ell}\sum_{p=1}^d \bs\xi^*_{p,\bs \ell}\int
\Bigl[
  \frac{Z^{obs}_{\bs k+\bs \ell}}{C^{ZZobs}_{\bs k+\bs \ell} }\frac{\bs C^{(1)}_{p,-\bs k} Z^{obs}_{-\bs k}}{C^{ZZobs}_{-\bs k}}
- \frac{\bs C^{{(1)}^*}_{p, \bs k+\bs\ell}Z^{obs}_{\bs k+\bs \ell}}{C^{ZZobs}_{\bs k+\bs \ell} } \frac{Z^{obs}_{-\bs k}}{C^{ZZobs}_{-\bs k}}
\Bigr] \frac{d\bs k}{{(2\pi)}^{d/2}} \nonumber\\
&= A_{\bs\ell}\sum_{p=1}^d \bs\xi^*_{p,\bs \ell} \int
\left[
\mathscr A_{{\bs k+\bs \ell}}\mathscr D_{p,-\bs k}  -
\mathscr C_{p,\bs k+\bs \ell}\mathscr A_{-\bs k}
 \right]  \frac{d\bs k}{{(2\pi)}^{d/2}} \nonumber \\
 &= A_{\bs\ell}\sum_{p=1}^d \bs\xi^*_{p,\bs \ell} \int
 e^{-i\bs x\cdot \bs\ell}
 \left[
 \mathscr A(\bs x)\mathscr D_{p}(\bs x)  -
 \mathscr C_{p}(\bs x)\mathscr A(\bs x)
  \right]  \frac{d\bs x}{{(2\pi)}^{d/2}} \label{first quad est form in proof}
\end{align}
where $\mathscr A_{\bs\ell}:= Z^{obs}_{\bs\ell}/ C^{ZZobs}_{\bs\ell}$, $\mathscr D_{p,\bs\ell}:=\bs C^{(1)}_{p,\bs\ell}Z^{obs}_{\bs\ell}/C^{ZZobs}_{\bs\ell}$ and $\mathscr C_{p,\bs\ell}:=\bs C^{{(1)}^*}_{p,\bs\ell}Z^{obs}_{\bs\ell}/C^{ZZobs}_{\bs\ell}$. Notice that the Fourier transform of $\mathscr D_{p}(\bs x)  - \mathscr C_{p}(\bs x)$ can be simplified as follows
\begin{align*}
\mathscr D_{p,\bs \ell}  - \mathscr C_{p,\bs \ell}
& = \bigl[\bs C^{(1)}_{p,\bs \ell} - \bs C^{{(1)}^*}_{p,\bs \ell}\bigr]\frac{Z^{obs}_{\bs\ell}}{C^{ZZobs}_{\bs\ell}}
 = i 2 \textit{\,imag}(\bs C^{(1)}_{p,\bs\ell})\frac{Z^{obs}_{\bs\ell}}{C^{ZZobs}_{\bs\ell}}.
\end{align*}
This gives (\ref{appendix-claim def of quad est}) as was to be shown.

The normalizing constant $A_{\bs\ell}$ is defined so that the right hand of (\ref{first quad est form in proof}) is unbiased (up to first order). Utilizing (\ref{eq: first order appendix behavior of ZZ}) this unbiased constraint is written as follows
\begin{align}
 1 &= A_{\bs\ell}\int \frac{|\bs \xi_{\bs \ell}\cdot \bs f_{\bs k,\bs\ell}|^2}{C^{ZZobs}_{\bs k+\bs \ell} C^{ZZobs}_{\bs k}} \frac{d\bs k}{{(2\pi)}^{d/2}}  \nonumber\\
&= A_{\bs\ell}\int \frac{\bigl|\bs \xi_{\bs \ell}\cdot\bs C^{(1)}_{\bs k+\bs \ell}  - \bs\xi_{\bs \ell}\cdot\bs C^{(1)}_{\bs k} \bigr|^2}{C^{ZZobs}_{\bs k+\bs \ell} C^{ZZobs}_{\bs k}} \frac{d\bs k}{{(2\pi)}^{d/2}}
\nonumber\\
&= A_{\bs\ell}\sum_{p,q = 1}^d\bs\xi_{p,\bs\ell}\bs\xi^*_{q,\bs\ell}\int \frac{\bs C^{(1)}_{p,\bs k+\bs \ell} \bs C^{{(1)}^*}_{q,\bs k+\bs \ell} + \bs C^{(1)}_{p,\bs k} \bs C^{{(1)}^*}_{q,\bs k}  -     \bs C^{(1)}_{p,\bs k+\bs \ell} \bs C^{{(1)}^*}_{q,\bs k} - \bs C^{{(1)}^*}_{p,\bs k+\bs \ell} \bs C^{(1)}_{q,\bs k} }{C^{ZZobs}_{\bs k+\bs\ell} C^{ZZobs}_{\bs k}} \frac{d\bs k}{{(2\pi)}^{d/2}}  \nonumber\\
&= A_{\bs\ell}\sum_{p,q = 1}^d\bs\xi_{p,\bs\ell}\bs\xi^*_{q,\bs\ell}\int  e^{-i\bs x\cdot\bs \ell} \left[2\mathscr A_{p,q}(\bs x)\mathscr B(\bs x)  - \mathscr C_{p}(\bs x)\mathscr C_{q}(\bs x) -  \mathscr D_{p}(\bs x)\mathscr D_{q}(\bs x)\right]  \frac{d\bs x}{{(2\pi)}^{d/2}}\label{Aell derivation 11}
\end{align}
where $\mathscr A_{p,q, \bs\ell}:= \bs C^{(1)}_{p,\bs\ell} \bs C^{{(1)}^*}_{q,\bs\ell}  / C^{ZZobs}_{\bs\ell}$, $\mathscr B_{\bs\ell}:= 1/C^{ZZobs}_{\bs\ell}$, $\mathscr C_{p,\bs\ell}:= \bs C^{(1)}_{p,\bs\ell}  / C^{ZZobs}_{\bs\ell}$ and $\mathscr D_{p,\bs\ell}:= \bs C^{{(1)}^*}_{p,\bs\ell}  / C^{ZZobs}_{\bs\ell}$.

\end{proof}

%
%

\begin{claim}[{\bf Estimation variance}]\label{thm: quantify the var fluctuations of hat phi}
    Suppose $X(\bs x)$ is a mean-zero Gaussian random field with spectral density given by $C^{X\!X}_{\bs \ell}$.
    Then the spectral density of $\hat\phi_{\bs \ell}\{X,\!X\}$ (c.f. Definition \ref{def: quad est applied to X}), which satisfies $\delta^{\phantom{*}}_{\bs \ell-\bs \ell^\prime}C^{\text{\rm var }\hat\phi}_{\bs \ell} = E\big(\hat\phi_{\bs \ell}\{X,\!X\} \, \hat\phi_{\bs \ell^\prime}\{X,\!X\}^*\big) $, is given as follows
    \begin{align}
        C_{\bs\ell}^{\text{\rm var }\hat{\phi}}
        &= 2 A_{\bs\ell}^2
        \int
        \Bigl|\bs\xi_{\bs\ell}\!\cdot\!\bs C^{{(1)}}_{\bs k} - \bs\xi_{\bs\ell}\!\cdot\! \bs C^{{(1)}}_{\bs k+\bs \ell}\Bigr|^2
        \frac{C^{X\!X}_{\bs k+\bs\ell}}{(C^{ZZobs}_{\bs k+\bs\ell})^2}
        \frac{C^{X\!X}_{\bs k}}{(C^{ZZobs}_{\bs k})^2}
         \frac{d\bs k}{{(2\pi)}^{d}} \label{appendix, claim raw var} \\
        &= 2 A_{\bs\ell}^2  \sum_{p,q = 1}^d\bs\xi_{p,\bs\ell}\bs\xi^*_{q,\bs\ell}\int e^{-i\bs x\cdot\bs \ell} \left[2\mathscr A_{p,q}(\bs x)\mathscr B(\bs x)  - \mathscr C_{p}(\bs x)\mathscr C_{q}(\bs x) -  \mathscr D_{p}(\bs x)\mathscr D_{q}(\bs x)\right]  \frac{d\bs x}{{(2\pi)}^{d}} \label{appendix, claim fast var}
    \end{align}
    for all $\bs \ell\neq 0$
    where $\mathscr A_{p,q, \bs \ell}:= \bs C^{(1)}_{p,\bs \ell} \bs C^{{(1)}^*}_{q,\bs \ell} C^{X\!X}_{\bs\ell} / (C^{ZZobs}_{\bs \ell})^2$, $\mathscr B_\ell:=  C^{X\!X}_{\bs\ell} / (C^{ZZobs}_{\bs\ell})^2$, $\mathscr C_{p,\bs\ell}:= \bs C^{(1)}_{p,\bs\ell}  C^{X\!X}_{\bs\ell} / (C^{ZZobs}_{\bs\ell})^2$ and $\mathscr D_{p,\bs\ell}:= \bs C^{{(1)}^*}_{p,\bs\ell}  C^{X\!X}_{\bs\ell} / (C^{ZZobs}_{\bs\ell})^2$.
    Moreover, if $C_{\bs\ell}^{X\!X} = C^{ZZobs}_{\bs\ell}$  one obtains
    \begin{equation}
    C_{\bs\ell}^{\text{\rm var }\hat{\phi}} =  2{(2\pi)}^{-d/2} A_{\bs\ell}.
    \end{equation}
\end{claim}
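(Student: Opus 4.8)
The plan is to write $\hat\phi_{\bs\ell}\{X,X\}$ as a single quadratic form in the Fourier coefficients of $X$ and then evaluate the required cross second moment by Isserlis' (Wick's) theorem. Setting
\[
W_{\bs k,\bs\ell} := A_{\bs\ell}\,\frac{\big(\bs\xi_{\bs\ell}\!\cdot\!\bs C^{(1)}_{\bs k} - \bs\xi_{\bs\ell}\!\cdot\!\bs C^{(1)}_{\bs k+\bs\ell}\big)^{*}}{C^{ZZobs}_{\bs k+\bs\ell}\,C^{ZZobs}_{\bs k}},
\]
Definition \ref{def: quad est applied to X} gives $\hat\phi_{\bs\ell}\{X,X\} = \int W_{\bs k,\bs\ell}\,X_{\bs k+\bs\ell}X_{-\bs k}\,\frac{d\bs k}{(2\pi)^{d/2}}$, so that
\[
E\big(\hat\phi_{\bs\ell}\{X,X\}\,\hat\phi_{\bs\ell^\prime}\{X,X\}^{*}\big)
= \iint W_{\bs k,\bs\ell}\,W^{*}_{\bs k^\prime,\bs\ell^\prime}\,
E\big(X_{\bs k+\bs\ell}X_{-\bs k}X^{*}_{\bs k^\prime+\bs\ell^\prime}X^{*}_{-\bs k^\prime}\big)\,\frac{d\bs k\,d\bs k^\prime}{(2\pi)^{d}}.
\]
First I would reduce the fourth moment using that $X$ is mean-zero Gaussian, that it is real (so $X^{*}_{\bs m}=X_{-\bs m}$), and that $E(X_{\bs m}X_{\bs m^\prime})=\delta_{\bs m+\bs m^\prime}C^{X\!X}_{\bs m}$. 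Of the three Wick pairings, the one contracting $X_{\bs k+\bs\ell}$ with $X_{-\bs k}$ carries a factor $\delta_{\bs\ell}$ and vanishes for $\bs\ell\neq\bs 0$; the other two each carry $\delta_{\bs\ell-\bs\ell^\prime}$ and, after integrating out $\bs k^\prime$ against the remaining Kronecker deltas (keeping track of the paper's convention $\delta_{\bs 0}=1/d\bs k$, under which $\int\delta_{\bs k^\prime-\bs a}h(\bs k^\prime)\frac{d\bs k^\prime}{(2\pi)^{d/2}}=(2\pi)^{-d/2}h(\bs a)$), reduce to $\delta_{\bs\ell-\bs\ell^\prime}\!\int |W_{\bs k,\bs\ell}|^{2}\,C^{X\!X}_{\bs k+\bs\ell}C^{X\!X}_{\bs k}\,\frac{d\bs k}{(2\pi)^{d}}$ and $\delta_{\bs\ell-\bs\ell^\prime}\!\int W_{\bs k,\bs\ell}W^{*}_{-\bs k-\bs\ell,\bs\ell}\,C^{X\!X}_{\bs k+\bs\ell}C^{X\!X}_{\bs k}\,\frac{d\bs k}{(2\pi)^{d}}$ respectively.

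The key algebraic observation is that $W_{-\bs k-\bs\ell,\bs\ell}=W_{\bs k,\bs\ell}$. This holds because $\bs C^{(1)}$ is odd ($\bs C^{(1)}(-\bs x)=-\bs C^{(1)}(\bs x)$, hence $\bs C^{(1)}_{-\bs k}=-\bs C^{(1)}_{\bs k}$) while $C^{ZZobs}$ is an even function of $\bs k$ (it is a sum of spectral densities), so the substitution $\bs k\mapsto-\bs k-\bs\ell$ leaves both the numerator $\bs\xi_{\bs\ell}\!\cdot\!\bs C^{(1)}_{\bs k}-\bs\xi_{\bs\ell}\!\cdot\!\bs C^{(1)}_{\bs k+\bs\ell}$ and the product $C^{ZZobs}_{\bs k+\bs\ell}C^{ZZobs}_{\bs k}$ invariant; also $A_{\bs\ell}$ is real, being fixed by the real identity (\ref{in text eq of the normalizing constant}). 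Hence the two surviving contractions coincide, their sum produces the overall factor $2$, and inserting $|W_{\bs k,\bs\ell}|^{2}=A_{\bs\ell}^{2}\big|\bs\xi_{\bs\ell}\!\cdot\!\bs C^{(1)}_{\bs k}-\bs\xi_{\bs\ell}\!\cdot\!\bs C^{(1)}_{\bs k+\bs\ell}\big|^{2}/\big((C^{ZZobs}_{\bs k+\bs\ell})^{2}(C^{ZZobs}_{\bs k})^{2}\big)$ yields precisely (\ref{appendix, claim raw var}), together with the assertion that the right-hand side is $\delta_{\bs\ell-\bs\ell^\prime}$ times a function of $\bs\ell$ (so that $C^{\text{var }\hat\phi}_{\bs\ell}$ is well defined).

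For the Fourier (fast) form (\ref{appendix, claim fast var}) I would expand $\big|\bs\xi_{\bs\ell}\!\cdot\!\bs C^{(1)}_{\bs k}-\bs\xi_{\bs\ell}\!\cdot\!\bs C^{(1)}_{\bs k+\bs\ell}\big|^{2}=\sum_{p,q}\bs\xi_{p,\bs\ell}\bs\xi^{*}_{q,\bs\ell}\big(\bs C^{(1)}_{p,\bs k}-\bs C^{(1)}_{p,\bs k+\bs\ell}\big)\big(\bs C^{(1)}_{q,\bs k}-\bs C^{(1)}_{q,\bs k+\bs\ell}\big)^{*}$, split the $\bs k$-integral into its four bilinear pieces, and absorb the weights $C^{X\!X}_{\bs k}/(C^{ZZobs}_{\bs k})^{2}$ and $C^{X\!X}_{\bs k+\bs\ell}/(C^{ZZobs}_{\bs k+\bs\ell})^{2}$ into the corresponding factors; each piece is then a convolution in $\bs\ell$, i.e. an inverse Fourier transform of a product of pixel-domain fields. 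This is the verbatim manipulation already performed for the normalizing constant in the proof of Claim \ref{appendix, claim: quad est derivations} (the steps leading to (\ref{Aell derivation 11})), only with $\mathscr A_{p,q},\mathscr B,\mathscr C_{p},\mathscr D_{p}$ as redefined in the statement, so I would simply quote that computation. Finally, when $C^{X\!X}_{\bs\ell}=C^{ZZobs}_{\bs\ell}$ the weight collapses, $C^{X\!X}_{\bs k}/(C^{ZZobs}_{\bs k})^{2}=1/C^{ZZobs}_{\bs k}$, so (\ref{appendix, claim raw var}) becomes $2A_{\bs\ell}^{2}(2\pi)^{-d/2}$ times $\int\big|\bs\xi_{\bs\ell}\!\cdot\!\bs C^{(1)}_{\bs k}-\bs\xi_{\bs\ell}\!\cdot\!\bs C^{(1)}_{\bs k+\bs\ell}\big|^{2}/(C^{ZZobs}_{\bs k+\bs\ell}C^{ZZobs}_{\bs k})\,\frac{d\bs k}{(2\pi)^{d/2}}$, which by (\ref{in text eq of the normalizing constant}) equals $A_{\bs\ell}^{-1}$; hence $C^{\text{var }\hat\phi}_{\bs\ell}=2(2\pi)^{-d/2}A_{\bs\ell}$.

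The main obstacle I anticipate is entirely one of bookkeeping: correctly enumerating the Wick contractions under the reality substitution $X^{*}_{\bs m}=X_{-\bs m}$, and threading the paper's nonstandard convention in which $\delta_{\bs k}$ is an honest function so that integrations against Kronecker deltas consume the measure factors in the right way. There is no deep difficulty beyond the single symmetry $W_{-\bs k-\bs\ell,\bs\ell}=W_{\bs k,\bs\ell}$, which is what makes the two nonzero contractions equal and thereby produces the factor $2$ in the final formula.
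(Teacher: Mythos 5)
Your proposal is correct and follows essentially the same route as the paper's proof: Wick expansion of the fourth moment, discarding the $\delta_{\bs\ell}\delta_{\bs\ell^\prime}$ pairing for $\bs\ell\neq\bs 0$, identifying the two surviving contractions via the change of variables $\bs k\mapsto -\bs k-\bs\ell$ (your symmetry $W_{-\bs k-\bs\ell,\bs\ell}=W_{\bs k,\bs\ell}$ is exactly the paper's "by a change of variables ... the terms are identical", spelled out in more detail), and then reusing the computation behind (\ref{Aell derivation 11}) for the fast Fourier form and the normalization identity for the special case $C^{X\!X}_{\bs\ell}=C^{ZZobs}_{\bs\ell}$.
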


\begin{proof}
    Recall Definition \ref{def: quad est applied to X} which states that $\hat\phi_{\bs \ell}\{X,\!X\}$ denotes the quadratic estimate applied to data $X(\bs x)$. Therefore
    \begin{align}
        E\big(\hat\phi_{\bs \ell}\{X,\!X\} \, \hat\phi_{\bs \ell^\prime}\{X,\!X\}^*\big)
        & = A_{\bs \ell}A_{\bs \ell^\prime}
        \iint
        {\Big(\bs\xi_{\bs \ell} \!\cdot\!\bs C^{{(1)}}_{\bs k} - \bs\xi_{\bs \ell} \!\cdot\!\bs C^{{(1)}}_{\bs k+\bs \ell}\Big)}^{\! *}
        {\Big(\bs\xi_{\bs \ell^\prime} \!\cdot\!\bs C^{{(1)}}_{\bs k^\prime} - \bs\xi_{\bs \ell^\prime} \!\cdot\!\bs C^{{(1)}}_{\bs k^\prime+\bs \ell^\prime}\Big)}^{}\nonumber\\
        &\qquad\qquad\qquad
        \times
        \frac{E\bigl(X_{\bs k+\bs \ell}X_{-\bs k}X_{-\bs k^\prime-\bs\ell^\prime}X_{\bs k^\prime}\bigr)}{C^{ZZobs}_{\bs k+\bs \ell}C^{ZZobs}_{\bs k}C^{ZZobs}_{\bs k^\prime+\bs \ell^\prime}C^{ZZobs}_{\bs k^\prime}}
        \frac{d\bs kd\bs k^\prime}{{(2\pi)}^{d}}.\label{wicks term in Cvar}
    \end{align}
    Now expanding the above fourth moment, using Wick's theorem (also called Isserlis's Theorem) \cite{wick1950evaluation, isserlis1916certain} and the Gaussianity of $X(\bs x)$, one obtains
    \begin{align*}
        E\bigl(X_{\bs k+\bs \ell}X_{-\bs k}X_{-\bs k^\prime-\bs\ell^\prime}X_{\bs k^\prime}\bigr)
        &=C^{X\!X}_{\bs k+\bs \ell}C^{X\!X}_{-\bs k^\prime}\bigl( \delta_{\bs\ell}\delta_{\bs\ell^\prime} +  \delta_{\bs k-\bs k^\prime + \bs \ell-\bs\ell^\prime}\delta_{\bs k-\bs k^\prime} +  \delta_{\bs k + \bs k^\prime +\bs\ell}\delta_{\bs k + \bs k^\prime +\bs\ell^\prime}   \bigr) \\
        &=C^{X\!X}_{\bs k+\bs\ell}C^{X\!X}_{-\bs k^\prime}\bigl( \delta_{\bs\ell}\delta_{\bs\ell^\prime} +  \delta_{
        \bs\ell-\bs\ell^\prime}\delta_{\bs k-\bs k^\prime} +  \delta_{\bs\ell-\bs\ell^\prime}\delta_{\bs k+\bs k^\prime +\bs\ell^\prime}   \bigr).
    \end{align*}
    The term $\delta_{\bs\ell}\delta_{\bs\ell^\prime}$ is only nonzero  when $\bs\ell=\bs\ell^\prime=0$. Furthermore, by a change of variables, one can see that effect of the terms $\delta_{ \bs\ell-\bs\ell^\prime}\delta_{\bs k-\bs k^\prime}$ and $ \delta_{\bs\ell-\bs\ell^\prime}\delta_{\bs k+\bs k^\prime +\bs \ell^\prime}$ in (\ref{wicks term in Cvar}) are identical. Therefore assuming $\bs \ell \neq 0$ and replacing  $E\bigl(X_{\bs k+\bs \ell}X_{-\bs k}X_{-\bs k^\prime-\bs\ell^\prime}X_{\bs k^\prime}\bigr)$ in (\ref{wicks term in Cvar}) with $2C^{X\!X}_{\bs k+\bs\ell}C^{X\!X}_{-\bs k^\prime}\delta_{ \bs\ell-\bs\ell^\prime}\delta_{\bs k-\bs k^\prime}$ gives (\ref{appendix, claim raw var}). A similar approach to the derivation for (\ref{Aell derivation 11}) can be used to establish (\ref{appendix, claim fast var}). Finally, if one replaces $C^{X\!X}_{\bs k}$ in (\ref{appendix, claim raw var}) with the marginal spectral density $C^{ZZobs}_{\bs k}$ then using (\ref{Aell derivation 11}) one obtains $C_{\bs\ell}^{\text{\rm var }\hat{\phi}} =  2{(2\pi)}^{-d/2} A_{\bs\ell}$.

\end{proof}

%
%

\begin{claim}[{\bf Estimation bias}]\label{thm: bias in the appendix}
    Let $\mathcal O(\phi^2)(\bs x,\bs y) := {(\bs\theta(\bs x)-\bs\theta(\bs y))}^T \bs C^{(2)}(\bs x-\bs y) (\bs\theta(\bs x)-\bs\theta(\bs y))$ where $\bs C^{(2)}(\bs x)\colon\Bbb R^d \rightarrow \Bbb R^{d\times d}$ and $\bs C^{(2)}(-\bs x)=\bs C^{(2)}(\bs x)$. Then
    \begin{align}
    	\label{eq: the form of Op2}
        \mathcal O(\phi^2)_{\bs k+\bs\ell, -\bs k}
        &=\sum_{p,q = 1}^d
        \int \bs\theta_{p,\bs\omega}\bs\theta_{q,\bs\ell - \bs\omega}
		 \Bigl(
         {\bs C^{(2)}_{p,q,\bs k}}
        + {\bs C^{(2)}_{p,q,\bs k +\bs \ell}}
        - {\bs C^{(2)}_{p,q,\bs k +\bs \ell - \bs\omega}}
        - {\bs C^{(2)}_{p,q,\bs k + \bs\omega}}
        \Bigr)\frac{d\bs\omega}{(2\pi)^{d/2}}.
    \end{align}
    Define $\hat\phi_{\bs \ell}^{\text{bias}}:=\hat\phi_{\bs \ell}\big\{\mathcal O(\phi^2)_{\bs k+\bs \ell, -\bs k}\big\}$ (see Definition \ref{def: quad est applied to X}), where the quadratic estimate $\hat\phi_{\bs \ell}$ satisfies all the assumptions given in Claim \ref{appendix, claim: quad est derivations}. Then
    \begin{align}
    	\label{eq: the form of hatphibias}
        \hat\phi_{\bs \ell}^{\text{bias}}&= 2\sum_{p,q=1}^d
    	\int
    	\bs\theta_{p,\bs\omega}\bs\theta_{q,\bs \ell - \bs\omega}\, \hat\phi_{\bs \ell}\Big\{{\bs C^{(2)}_{p,q,\bs k}}- {\bs C^{(2)}_{p,q,\bs k + \bs\omega}}\Big\}\frac{d\bs\omega}{(2\pi)^{d/2}}
   	\end{align}
    and the spectral density of $\hat\phi_{\bs \ell}^{\text{bias}}$, under a mean zero Gaussian random field model for $\bs\theta(\bs x)$ with spectral density matrix $C_{\bs \ell}^{\bs \theta\bs\theta}$, satisfies
	\begin{align}
    	\label{eq: the form of the spectrum for hatphibias}
		E\big(\hat\phi_{\bs \ell}^{\text{bias}} \hat\phi_{\bs \ell^\prime}^{\text{bias}^*}\big)
        &=
        4\delta_{\bs\ell-\bs \ell^\prime}\!\!\! \sum_{p,q,p^\prime,q^\prime=1}^d \int
        \Big(
        C^{\bs \theta\bs \theta}_{p,p^\prime,\bs \omega}C^{\bs \theta\bs \theta}_{q,q^\prime,\bs \ell - \bs \omega}
        + C^{\bs \theta\bs \theta}_{p,q^\prime,\bs \omega}C^{\bs \theta\bs \theta}_{q,p^\prime,\bs \ell - \bs \omega}
        \Big)\nonumber
    	\\
        &\qquad\qquad\qquad\qquad\qquad\times
        \hat\phi_{\bs \ell}\Big\{{\bs C^{(2)}_{p,q,\bs k}}- {\bs C^{(2)}_{p,q,\bs k + \bs\omega}}\Big\}
        \hat\phi_{\bs \ell}\Big\{{\bs C^{(2)}_{p^\prime,q^\prime,\bs k}}- {\bs C^{(2)}_{p^\prime,q^\prime,\bs k + \bs\omega}}\Big\}^*\frac{d\bs\omega}{(2\pi)^{d}}
        \end{align}
        when $\bs \ell \neq 0$ or $\bs \ell^\prime \neq 0$.
\end{claim}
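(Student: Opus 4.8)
The plan is to prove the three displays in order. The first, (\ref{eq: the form of Op2}), is direct Fourier bookkeeping; the other two rest on a reflection symmetry $\bs k\mapsto-\bs k-\bs \ell$ of the quadratic-estimate weights. For (\ref{eq: the form of Op2}) I would insert $\bs\theta_p(\bs x)-\bs\theta_p(\bs y)=\int(e^{i\bs x\cdot\bs\omega}-e^{i\bs y\cdot\bs\omega})\,\bs\theta_{p,\bs\omega}\,\frac{d\bs\omega}{(2\pi)^{d/2}}$ and $\bs C^{(2)}_{p,q}(\bs x-\bs y)=\int e^{i(\bs x-\bs y)\cdot\bs m}\,\bs C^{(2)}_{p,q,\bs m}\,\frac{d\bs m}{(2\pi)^{d/2}}$ into $\mathcal O(\phi^2)(\bs x,\bs y)=\sum_{p,q}(\bs\theta_p(\bs x)-\bs\theta_p(\bs y))\,\bs C^{(2)}_{p,q}(\bs x-\bs y)\,(\bs\theta_q(\bs x)-\bs\theta_q(\bs y))$ and expand the product of the two differences into four exponential terms. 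Taking the Fourier transform in $\bs x$ at $\bs k+\bs \ell$ and in $\bs y$ at $-\bs k$ turns each exponential into a pair of Dirac deltas which together force $\bs\omega_1+\bs\omega_2=\bs \ell$ and exactly one of $\bs m\in\{\bs k,\ \bs k+\bs \ell,\ \bs k+\bs \ell-\bs\omega_1,\ \bs k+\bs\omega_1\}$; summing the four contributions and renaming $\bs\omega_1\to\bs\omega$ yields (\ref{eq: the form of Op2}), with no appeal to the symmetry of $\bs C^{(2)}$ since each exponential produces precisely one of the four shifts.

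Next, because $X\mapsto\hat\phi_{\bs \ell}\{X\}$ in Definition \ref{def: quad est applied to X} is linear and the factors $\bs\theta_{p,\bs\omega}\bs\theta_{q,\bs \ell-\bs\omega}$ do not involve $\bs k$, applying $\hat\phi_{\bs \ell}\{\cdot\}$ to (\ref{eq: the form of Op2}) leaves $\hat\phi_{\bs \ell}$ acting on the bracket $\bs C^{(2)}_{p,q,\bs k}+\bs C^{(2)}_{p,q,\bs k+\bs \ell}-\bs C^{(2)}_{p,q,\bs k+\bs \ell-\bs\omega}-\bs C^{(2)}_{p,q,\bs k+\bs\omega}$. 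The key observation is that the $\bs k$-weight $W_{\bs k,\bs \ell}:=\big(\bs\xi_{\bs \ell}\cdot\bs C^{(1)}_{\bs k}-\bs\xi_{\bs \ell}\cdot\bs C^{(1)}_{\bs k+\bs \ell}\big)^{*}\big/\big(C^{ZZobs}_{\bs k+\bs \ell}C^{ZZobs}_{\bs k}\big)$ appearing in $\hat\phi_{\bs \ell}\{\cdot\}$ is invariant under $\bs k\mapsto-\bs k-\bs \ell$, since $\bs C^{(1)}$ is odd about the origin (hence $\bs C^{(1)}_{-\bs m}=-\bs C^{(1)}_{\bs m}$) and $C^{ZZobs}$ is an even spectral density. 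A change of variables $\bs k\mapsto-\bs k-\bs \ell$ then gives $\hat\phi_{\bs \ell}\{g_{\bs k}\}=\hat\phi_{\bs \ell}\{g_{-\bs k-\bs \ell}\}$ for any $g$; together with $\bs C^{(2)}_{-\bs m}=\bs C^{(2)}_{\bs m}$ this forces $\hat\phi_{\bs \ell}\{\bs C^{(2)}_{p,q,\bs k+\bs \ell}\}=\hat\phi_{\bs \ell}\{\bs C^{(2)}_{p,q,\bs k}\}$ and $\hat\phi_{\bs \ell}\{\bs C^{(2)}_{p,q,\bs k+\bs \ell-\bs\omega}\}=\hat\phi_{\bs \ell}\{\bs C^{(2)}_{p,q,\bs k+\bs\omega}\}$, so the bracket collapses to $2\big(\hat\phi_{\bs \ell}\{\bs C^{(2)}_{p,q,\bs k}\}-\hat\phi_{\bs \ell}\{\bs C^{(2)}_{p,q,\bs k+\bs\omega}\}\big)$, which is (\ref{eq: the form of hatphibias}).

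For the spectral density, abbreviate $G^{p,q}_{\bs \ell,\bs\omega}:=\hat\phi_{\bs \ell}\{\bs C^{(2)}_{p,q,\bs k}-\bs C^{(2)}_{p,q,\bs k+\bs\omega}\}$, so that $\hat\phi^{\text{bias}}_{\bs \ell}$ is a stochastic integral over $\bs\omega$ quadratic in the Gaussian field $\bs\theta$. Then $E(\hat\phi^{\text{bias}}_{\bs \ell}\hat\phi^{\text{bias}*}_{\bs \ell'})$ is $4$ times a double integral in $\bs\omega,\bs\omega'$ of $G^{p,q}_{\bs \ell,\bs\omega}G^{p',q'*}_{\bs \ell',\bs\omega'}$ against the fourth moment $E(\bs\theta_{p,\bs\omega}\bs\theta_{q,\bs \ell-\bs\omega}\bs\theta^{*}_{p',\bs\omega'}\bs\theta^{*}_{q',\bs \ell'-\bs\omega'})$, which I expand by Wick's (Isserlis's) theorem using $E(\bs\theta_{p,\bs\omega}\bs\theta^{*}_{q,\bs\omega'})=\delta_{\bs\omega-\bs\omega'}C^{\bs\theta\bs\theta}_{p,q,\bs\omega}$. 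Of the three pairings, the one contracting the two $\bs\theta$'s inside $\hat\phi^{\text{bias}}_{\bs \ell}$ with each other carries $\delta_{\bs \ell}\delta_{\bs \ell'}$ and so vanishes when $\bs \ell\neq 0$ or $\bs \ell'\neq 0$ (the same contraction gives $E(\hat\phi^{\text{bias}}_{\bs \ell})=0$ there, which is what makes ``spectral density'' the right name). Each of the remaining two pairings produces $\delta_{\bs \ell-\bs \ell'}$; integrating out $\bs\omega'$, the first contributes the summand $C^{\bs\theta\bs\theta}_{p,p',\bs\omega}C^{\bs\theta\bs\theta}_{q,q',\bs \ell-\bs\omega}$ multiplied by $G^{p,q}_{\bs \ell,\bs\omega}G^{p',q'*}_{\bs \ell,\bs\omega}$, and the second contributes $C^{\bs\theta\bs\theta}_{p,q',\bs\omega}C^{\bs\theta\bs\theta}_{q,p',\bs \ell-\bs\omega}$ multiplied by $G^{p,q}_{\bs \ell,\bs\omega}G^{p',q'*}_{\bs \ell,\bs \ell-\bs\omega}$. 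Invoking the reflection identity of the previous step once more (together with $\bs C^{(2)}_{-\bs m}=\bs C^{(2)}_{\bs m}$) to rewrite $G^{p',q'}_{\bs \ell,\bs \ell-\bs\omega}=G^{p',q'}_{\bs \ell,\bs\omega}$ makes both summands carry the same $\hat\phi$-product, and collecting terms (with measure $d\bs\omega/(2\pi)^{d}$) gives (\ref{eq: the form of the spectrum for hatphibias}).

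The calculations are mostly bookkeeping, but the whole argument pivots on the reflection symmetry $\bs k\mapsto-\bs k-\bs \ell$ of the estimator weight, which itself rests on $\bs C^{(1)}$ being odd and $\bs C^{(2)},C^{ZZobs}$ being even about the origin. It is exactly this symmetry that collapses the four $\bs C^{(2)}$-shifts of (\ref{eq: the form of Op2}) into the two of (\ref{eq: the form of hatphibias}), and that forces the two Wick cross-pairings in the last step to deliver the \emph{same} $\hat\phi$-product rather than $G_{\bs \ell,\bs\omega}$ paired with $G_{\bs \ell,\bs \ell-\bs\omega}$; without it $\hat\phi^{\text{bias}}_{\bs \ell}$ would not have the compact spectral form claimed. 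The only other care needed is tracking the powers of $2\pi$ as one passes between the convolution measure $d\bs\omega/(2\pi)^{d/2}$ and the $d\bs\omega/(2\pi)^{d}$ that survives after the Wick deltas are integrated out.
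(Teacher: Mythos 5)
Your proposal is correct and follows essentially the same route as the paper's proof: the same direct Fourier/convolution computation for the four-shift identity, the same change of variables $\tilde{\bs k}=-\bs k-\bs\ell$ (your ``reflection symmetry'' of the weights, resting on $\bs C^{(1)}$ odd and $\bs C^{(2)}, C^{ZZobs}$ even) to collapse the four shifts to two, and the same Wick expansion with the $\delta_{\bs\ell}\delta_{\bs\ell'}$ pairing discarded and the reflection identity reused to align the two cross-pairings. The only differences are presentational (direct substitution of Fourier representations versus the paper's convolution-of-transforms bookkeeping), so there is nothing to add.
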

\begin{proof}
    Notice that for any $p,q \in \{1,\ldots, d\}$ if one defines $B_{p,q}(\bs x,\bs y) := (\bs\theta_p(\bs x)-\bs\theta_p(\bs y))(\bs\theta_q(\bs x)-\bs\theta_q(\bs y))$ then the Fourier transform of $B_{p,q}(\bs x,\bs y)$ with respect to $(\bs x,\bs y)$, evaluated at frequency vector $(\bs k,\bs k^\prime)$, is given by
    \begin{align*}
       B_{p,q,\bs k,\bs k^\prime} = \int \bs\theta_{p,\bs\omega}\Big(
        \bs\theta_{q,\bs k - \bs \omega}&\delta_{-\bs k^\prime}
        + \bs\theta_{q,\bs k^\prime - \bs \omega}\delta_{-\bs k}\delta_{\bs\omega+\bs\omega^\prime-\bs k^\prime} \\
        &- \bs\theta_{q,\bs k^\prime}\delta_{\bs\omega-\bs k}\delta_{\bs\omega^\prime-\bs k^\prime}
        - \bs\theta_{q,\bs k}\delta_{\bs\omega^\prime-\bs k}\delta_{\bs\omega-\bs k^\prime}
        \Big)d\bs\omega.
    \end{align*}
    Using the fact that  Fourier transform of $\bs C^{(2)}_{p,q}(\bs x-\bs y)$ equals ${(2\pi)}^{d/2}\bs C^{(2)}_{p,q,\bs k} \delta_{\bs k+\bs k^\prime}$ one obtains
    \begin{align}
        \mathcal O(\phi^2)_{\bs k, \bs k^\prime} &= \sum_{p,q=1}^d\Bigl[
        \,
        \underbrace{\bs C^{(2)}_{p,q}(\bs x-\bs y)}_{A_{p,q}(\bs x,\bs y)}
        \underbrace{(\bs\theta_p(\bs x)-\bs\theta_p(\bs y))(\bs\theta_q(\bs x)-\bs\theta_q(\bs y))}_{B_{p,q}(\bs x,\bs y)}
        \,
        \Bigr]{\vphantom{\int}}_{\bs k,\bs k^\prime} \nonumber\\
        &=\sum_{p,q=1}^d
        \iint  A_{p,q,\bs z+\bs k, \bs z^\prime+\bs k^\prime}B_{p,q,-\bs z, -\bs z^\prime}\frac{d\bs z \, d\bs z^\prime}{{(2\pi)}^d} \nonumber\\
        &=\sum_{p,q=1}^d\int
        \bs\theta_{p,\bs\omega}\bs\theta_{q,\bs k+\bs k^\prime - \bs\omega}
		 \Bigl(
         {\bs C^{(2)}_{p,q,-\bs k^\prime}}
        + {\bs C^{(2)}_{p,q,\bs k}}
        - {\bs C^{(2)}_{p,q,\bs k - \bs\omega}}
        - {\bs C^{(2)}_{p,q,\bs\omega-\bs k^\prime}}
        \Bigr)\frac{d\bs\omega}{(2\pi)^{d/2}}.
        \label{2p349571 first order bias behavior}
    \end{align}
    Making the substitution $\bs k^\prime \rightarrow -\bs k$ and $\bs k\rightarrow \bs k+\bs \ell$ in (\ref{2p349571 first order bias behavior}) proves (\ref{eq: the form of Op2}). Equation (\ref{eq: the form of hatphibias}) immediately follows from the fact that
    \begin{align*}
	\hat\phi_{\bs \ell}\Big\{{\bs C^{(2)}_{p,q,\bs k+\bs \ell}}- {\bs C^{(2)}_{p,q,\bs k+\bs \ell - \bs\omega}}\Big\}
    &=\hat\phi_{\bs \ell}\Big\{{\bs C^{(2)}_{p,q,\bs k}}- {\bs C^{(2)}_{p,q,\bs k + \bs\omega}}\Big\}
    \end{align*}
    which is established by utilizing the three properties $\bs C^{(2)}_{-\bs k} = \bs C^{(2)}_{\bs k}\in\Bbb R^{d\times d}$, $\bs C^{(1)^*}_{-\bs k} = \bs C^{(1)}_{\bs k}$ and $\bs C^{(1)}_{-\bs k} = -\bs C^{(1)}_{\bs k}$ along with the change of variables $\tilde {\bs k} = - \bs k - \bs \ell $. Finally, using Wick's theorem for Gaussian $\bs \theta(\bs x)$ gives
    \begin{align}
    \label{eq: wicks expansion of theta}
    E\big(\bs \theta_{p,\bs\omega}\bs \theta_{q,\bs \ell - \bs\omega}\bs \theta_{p^\prime,\bs\omega^\prime}^* \bs \theta_{q^\prime,\bs\ell^\prime - \bs\omega^\prime}^*\big)
    &= \delta_{\bs \ell - \bs \ell^\prime}\big(C^{\bs \theta\bs \theta}_{p,p^\prime,\bs \omega}C^{\bs \theta\bs \theta}_{q,q^\prime,\bs \ell - \bs \omega}\delta_{\bs \ell - \bs\omega^\prime - \bs \omega}
        + C^{\bs \theta\bs \theta}_{p,q^\prime,\bs \omega}C^{\bs \theta\bs \theta}_{q,p^\prime,\bs \ell - \bs \omega}\delta_{ \bs\omega^\prime + \bs \omega}  \big)
    \end{align}
    when $\bs \ell \neq 0$ or $\bs \ell^\prime \neq 0$. Expanding the quadratic $E\big(\hat\phi_{\bs \ell}^{\text{bias}} \hat\phi_{\bs \ell^\prime}^{\text{bias}^*}\big)$, applying Fubini and (\ref{eq: wicks expansion of theta}) then gives (\ref{eq: the form of the spectrum for hatphibias}) as was to be shown.
\end{proof}

\end{document}